\theoremstyle{plain} %text of this environment is typesetted in italics
\newtheorem{theorem}{\indent\sc Theorem}[section]
\newtheorem{lemma}[theorem]{\indent\sc Lemma}
\newtheorem{corollary}[theorem]{\indent\sc Corollary}
\newtheorem{proposition}[theorem]{\indent\sc Proposition}
\theoremstyle{definition} %text of this environment is typesetted in roman letters
\newtheorem{definition}[theorem]{\indent\sc Definition}
\newtheorem{remark}[theorem]{\indent\sc Remark}
\title{Canonical connections attached to generalized quaternionic and para-quaternionic structures}
\author{Adara M. Blaga and Antonella Nannicini}
\date{}
\begin{document}

\maketitle

\markboth{{\small\it {\hspace{0.5cm} Canonical connections attached to generalized quaternionic and para-quaternionic structures}}}{\small\it{Canonical connections attached to generalized quaternionic and para-quaternionic structures\hspace{0.5cm}}}

%%%%%%%%%%%%%%% footnote %%%%%%%%%%%%%%%%
\footnote{
2020 \textit{Mathematics Subject Classification}.
53C15, 53B05, 53B12.}

\footnote{
\textit{Key words and phrases}.
Quaternionic and para-quaternionic structures; generalized tangent bundle; canonical connection; quasi-statistical structure.}

\begin{abstract}
We put into light some generalized almost quaternionic and almost para-quater\-nion\-ic structures and characterize their integrability with respect to a $\nabla$-bracket on the generalized tangent bundle $TM\oplus T^*M$ of a smooth manifold $M$, defined by an affine connection $\nabla$ on $M$. Also, we provide necessary and sufficient conditions for these structures to be $\hat \nabla$-parallel and $\hat \nabla^*$-parallel, where $\hat \nabla$ is an affine connection on $TM\oplus T^*M$ induced by $\nabla$, and $\hat\nabla^*$ is its generalized dual connection with respect to a bilinear form $\check h$ on $TM\oplus T^*M$ induced by a non-degenerate symmetric or skew-symmetric $(0,2)$-tensor field $h$ on $M$. As main results, we establish the existence of a canonical connection associated to a generalized quaternionic and to a generalized para-quaternionic structure, i.e., a torsion-free generalized affine connection that parallelizes these structures. We show that, in the quaternionic case, the canonical connection is the generalized Obata connection and that on a quasi-statistical manifold $(M,h,\nabla)$, an integrable $h$-symmetric and $\nabla$-parallel $(1,1)$-tensor field gives rise to a generalized para-quaternionic structure whose canonical connection is precisely $\hat \nabla^*$. Finally we prove that the generalized affine connection that parallelizes certain families of generalized almost complex and almost product structures is preserved.
\end{abstract}

\section{Introduction}

An important problem in the general theory of connections is to construct an affine connection with respect to which a certain tensor field is covariantly constant.
The basic example is the Levi-Civita connection for a pseudo-Riemannian metric tensor field. For an almost complex or an almost product structure,
such connections that parallelizes them have been determined by Obata \cite{ob} and Etayo \cite{ES}, respectively. Moreover, Fr\"olicher gave a way to construct such a canonical connection with respect to which an almost complex structure is parallel, starting from a given one. In particular, he showed that if the initial connection is torsion-free, then the torsion of the obtained connection is proportional to the Nijenhuis tensor of the almost complex structure \cite{eck, frol}.

In 2003, Hitchin \cite{hi} firstly extended the geometry of the tangent bundle $TM$ of a smooth manifold $M$ to the generalized tangent bundle $TM\oplus T^*M$, where $T^*M$ stands for the cotangent bundle of $M$, replacing the Lie bracket by the Courant bracket. This new generalized version is of interest also in theoretical physics.
Recently, the authors of the present paper developed the geometry of statistical structures in this framework,
showing that the quasi-statistical structures (i.e., statistical structures with torsion) fit perfectly into the generalized geometry setting \cite{BN1, AbAn, blanan, nanic}.
They constructed some special connections on the generalized tangent bundle and characterized the integrability of some generalized structures by means of a different bracket from the Courant bracket, defined in \cite{nani} by using an affine connection $\nabla$ on $M$, and called $\nabla$-bracket.

In the present paper, in the generalized geometry framework, we consider some particular generalized structures, namely, almost quaternionic and almost para-quater\-nion\-ic structures and characterize their integrability with respect to the $\nabla$-bracket on
\linebreak
$TM\oplus T^*M$, defined by an affine connection $\nabla$ on $M$.
Also, we provide necessary and sufficient conditions for these structures to be $\hat \nabla$-parallel and $\hat \nabla^*$-parallel, where $\hat \nabla$ is an affine connection on $TM\oplus T^*M$ induced by $\nabla$, and $\hat \nabla^*$ is its generalized dual connection with respect to a bilinear form $\check h$ on $TM\oplus T^*M$ induced by a non-degenerate symmetric or skew-symmetric $(0,2)$-tensor field $h$ on $M$. As main results, we establish the existence of a canonical connection associated to a generalized quaternionic and to a generalized para-quaternionic structure, i.e., a torsion-free generalized affine connection that parallelizes these structures.
We show that, in the quaternionic case, the canonical connection is the generalized Obata connection and that on a quasi-statistical manifold $(M,h,\nabla)$, an integrable $h$-symmetric and $\nabla$-parallel $(1,1)$-tensor field gives rise to a generalized para-quaternionic structure whose canonical connection is precisely $\hat \nabla^*$. Finally we prove that the generalized affine connection that parallelizes certain families of generalized almost complex and almost product structures is preserved.

\section{Preliminaries}

Let $M$ be a smooth manifold, in what follows we shall denote the $C^\infty$ functions on $M$ by $C^\infty(M)$, the smooth sections of $TM$ by $\Gamma^{\infty}(TM)$, of $T^*M$ by $\Gamma^{\infty}(T^*M)$ and of $TM\oplus T^*M$ by $\Gamma^{\infty}(TM\oplus T^*M)$.

\subsection{Norden and para-Norden structures}

\begin{definition} Let $h$ be a pseudo-Riemannian metric on $M$ and let $J:\Gamma^{\infty}(TM) \rightarrow \Gamma^{\infty}(TM)$ be a $h$-symmetric morphism, i.e., $h(JX,Y)=h(X,JY)$, for any $X,Y \in \Gamma^{\infty}(TM)$. Then

(i) $(h,J)$ is called an \textit{almost Norden structure} (and $(M,h,J)$ is called an \textit{almost Norden manifold}) \cite{nor} if $J$ is an almost complex structure, i.e., $J^2=-I$;

(ii) $(h,J)$ is called an \textit{almost para-Norden structure} (and $(M,h,J)$ is called an \textit{almost para-Norden manifold}) \cite{salimov} if $J$ is an almost product structure, i.e., $J^2=I$.

Moreover, if $J$ is integrable, i.e., its Nijenhuis tensor field $N_J$:
\[
N_{J}(X,Y):=[JX,JY]-J[JX, Y]-J[X, JY]+J^{2}[X,Y]
\]
vanishes for any $X,Y\in {\Gamma}^{\infty}(TM)$, we shall drop the adjective \textit{almost}.
\end{definition}

\subsection{Quasi-statistical manifolds}

The notion of quasi-statistical structure was introduced in 2012 by Matsuzoe.

\begin{definition} \cite{ma}
Let $h$ be a non-degenerate $(0,2)$-tensor field and let $\nabla$ be an affine connection on $M$.
Then, $(h,\nabla)$ is called a \textit{quasi-statistical structure} on $M$ (and $(M,h,\nabla)$ is called a \textit{quasi-statistical manifold}) if $d^{\nabla}h=0$, where
\[
(d^{\nabla}h)(X,Y,Z):=(\nabla _X h)(Y,Z)-(\nabla _Y h)(X,Z)+h(T^{\nabla}(X,Y),Z),
\]
for $X, Y, Z \in \Gamma^{\infty} (TM)$,
\[
(\nabla_Xh)(Y,Z):=X(h(Y,Z))-h(\nabla_XY,Z)-h(Y,\nabla_XZ)
\]
is the covariant derivative of $h$ with respect to $\nabla$ and
\pagebreak
\[
T^\nabla(X,Y):=\nabla_XY-\nabla_Y X-[X,Y]
\]
is the torsion operator of ${\nabla}$, for $[\cdot,\cdot]$ the Lie bracket.
\end{definition}

\subsection{Dualistic structures}

In the framework of pseudo-Riemannian geometry, the notion of dual connection was firstly introduced by Amari, which he used in treating statistical inference problems.

\begin{definition} \cite{A1}
Let $(M,h)$ be a pseudo-Riemannian manifold. Two affine connections $\nabla$ and $\nabla^*$ on $M$ are said to be \textit{dual connections}
with respect to $h$ if they satisfy
\[
X(h(Y,Z))=h(\nabla_XY,Z)+h(Y,\nabla^*_XZ),
\]
for any $X,Y,Z\in \Gamma^{\infty}(TM)$. In this case, $(h;\nabla,\nabla^*)$ is called a \textit{dualistic structure} on $M$.
\end{definition}

Also, the notion of dual connection can be stated with respect to a non-degenerate $(0,2)$-tensor field $h$ on $M$.

\bigskip

Direct computations provide the following expression for the dual connection.

\begin{lemma} \label{l5} \cite{blanan}
If $\nabla$ is an affine connection and $h$ is a non-degenerate symmetric or skew-symmetric $(0,2)$-tensor field on $M$, then, the dual connection $\nabla^*$ of $\nabla$ with respect to $h$ is given by:
\[
\nabla^*_XY=\nabla_XY+h^{-1}((\nabla_Xh)(Y)),
\]
\[
\nabla^*_X\beta=\nabla_X\beta-(\nabla_Xh)(h^{-1}(\beta)),
\]
for any $X,Y\in \Gamma^{\infty}(TM)$ and $\beta\in \Gamma^{\infty}(T^*M)$. Moreover, $(\nabla^*)^*=\nabla$.
\end{lemma}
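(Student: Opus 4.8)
The plan is to verify the two stated formulas directly against the defining relation of the dual connection and then to deduce the involutivity from the symmetry of that relation. Writing the relation from Definition as
\[
X(h(Y,Z))=h(\nabla_XY,Z)+h(Y,\nabla^*_XZ),
\]
I would first isolate the correction term $V:=\nabla^*_XZ-\nabla_XZ$. Subtracting $h(\nabla_XY,Z)+h(Y,\nabla_XZ)$ from both sides and recognizing the right-hand side as a covariant derivative, one gets
\[
h(Y,V)=X(h(Y,Z))-h(\nabla_XY,Z)-h(Y,\nabla_XZ)=(\nabla_Xh)(Y,Z),
\]
valid for every $Y\in\Gamma^{\infty}(TM)$. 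Since $h$ is non-degenerate, this determines $V$ uniquely, and it remains only to identify $V$ with $h^{-1}((\nabla_Xh)(Z))$, where $h^{-1}$ is the inverse of the musical morphism $W\mapsto h(W,\cdot)$.

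This is the step I expect to require the most care, and it is where the symmetric and skew-symmetric cases must be handled together. The musical identification fixes one slot of $h$, so reading off $V$ from $h(Y,V)$ rather than from $h(V,Y)$ introduces a sign governed by the (anti)symmetry of $h$; the key observation is that this sign is exactly compensated by the corresponding (anti)symmetry of $\nabla_Xh$. When $h$ is symmetric, $(\nabla_Xh)(Y,Z)=(\nabla_Xh)(Z,Y)$ and $h(Y,V)=h(V,Y)$; when $h$ is skew-symmetric, both pick up a minus sign. In either case $h(V,Y)=(\nabla_Xh)(Z,Y)=((\nabla_Xh)(Z))(Y)$ for all $Y$, whence $V=h^{-1}((\nabla_Xh)(Z))$, which is the first formula uniformly. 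For the cotangent formula I would take $\nabla^*$ on $T^*M$ to be the connection induced by the affine connection $\nabla^*$ through the pairing, so that $(\nabla^*_X\beta)(Y)=X(\beta(Y))-\beta(\nabla^*_XY)$; substituting the first formula and again using the matching (anti)symmetry of $h$ and of $\nabla_Xh$ to rewrite $\beta(h^{-1}((\nabla_Xh)(Y)))$ as $((\nabla_Xh)(h^{-1}(\beta)))(Y)$ yields $\nabla^*_X\beta=\nabla_X\beta-(\nabla_Xh)(h^{-1}(\beta))$.

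For the involution $(\nabla^*)^*=\nabla$, rather than applying the formula twice and computing $\nabla^*_Xh$, I would argue directly from the defining relation. Relabelling $Y$ and $Z$ gives $X(h(Z,Y))=h(\nabla_XZ,Y)+h(Z,\nabla^*_XY)$, and applying the (anti)symmetry of $h$ to all three terms multiplies both sides by the common factor $\varepsilon=\pm1$, which then cancels; this leaves $X(h(Y,Z))=h(\nabla^*_XY,Z)+h(Y,\nabla_XZ)$, precisely the relation characterizing $(\nabla^*)^*$. By non-degeneracy of $h$, $(\nabla^*)^*_XZ=\nabla_XZ$, so $(\nabla^*)^*=\nabla$. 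The only genuine subtlety throughout is the bookkeeping of these signs, and the unifying point is that the sign coming from the fixed slot of $h$ always matches the one coming from the corresponding symmetry of $\nabla_Xh$, so both the symmetric and the skew-symmetric cases collapse to the single stated formula.
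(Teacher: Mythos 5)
Your proposal is correct and matches the approach the paper itself alludes to: the lemma is quoted from \cite{blanan} with the remark that ``direct computations'' yield the formulas, and your derivation is exactly that direct computation, with the one genuinely delicate point --- that the sign $\varepsilon=\pm1$ from swapping slots of $h$ is cancelled by the identical (anti)symmetry inherited by $\nabla_Xh$ --- handled correctly in both formulas and in the involution $(\nabla^*)^*=\nabla$. Nothing further is needed.
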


\subsection{The generalized tangent bundle $TM\oplus T^*M$}

\subsubsection{Some geometrical structures}
Let $h$ be a non-degenerate $(0,2)$-tensor field and let $\nabla$ be an affine connection on $M$.
In \cite{blanan} we defined the bilinear form $\check h$ and the affine connection $\hat{\nabla}$ on the generalized tangent bundle $TM \oplus T^*M$ by:
\[
\check h(X+\eta, Y+\beta):=h(X,Y)+h(h^{-1}(\eta),h^{-1}(\beta)),
\]
\[
\hat{\nabla}_{X+\eta}(Y+\beta):=\nabla_XY+h(\nabla_X(h^{-1}(\beta))),
\]
for $X,Y \in {\Gamma}^{\infty}(TM)$ and $\eta, \beta \in {\Gamma}^{\infty}(T^*M).$ Then
\pagebreak
\[
T^{\hat{\nabla}}(X+\eta, Y+\beta)=T^{\nabla}(X,Y)+h\Big((\nabla_Xh^{-1})\beta-(\nabla_Yh^{-1})\eta\Big).
\]

\bigskip

A direct computation gives the expression of the \textit{generalized dual connection} of $\hat{\nabla}$ with respect to $\check h$.

\begin{proposition} \label{p2} \cite{blanan}
If $\nabla$ is an affine connection and $h$ is a non-degenerate symmetric or skew-symmetric $(0,2)$-tensor field on $M$, then, the dual connection ${\hat \nabla}^*$ of
$\hat \nabla$ with respect to $\check h$, defined by:
\[
\check h(Y+\beta, {\hat {\nabla}}^*_{X+\eta}(Z+\gamma))=X(\check h(Y+\beta,Z+\gamma))-\check h({\hat \nabla}_{X+\eta}(Y+\beta),Z+\gamma),
\]
for $X,Y,Z \in {\Gamma}^{\infty}(TM)$ and $\eta, \beta, \gamma \in {\Gamma}^{\infty}(T^*M)$, is given by:
\begin{equation} \label{22}
{\hat {\nabla}}^*_{X+\eta}(Z+\gamma)=h^{-1}({\nabla}_X(h(Z)))+{\nabla}_X \gamma.
\end{equation}
\end{proposition}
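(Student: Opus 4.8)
The plan is to use the fact that $\check h$ is non-degenerate, a property inherited from $h$: in $\check h$ the tangent and cotangent summands are decoupled, acting by $h$ on the tangent part and by the pairing $h(h^{-1}(\eta),h^{-1}(\beta))$ on the cotangent part, the latter being non-degenerate because $h^{-1}$ is an isomorphism. Consequently the defining relation determines $\hat\nabla^*_{X+\eta}(Z+\gamma)$ uniquely, and it suffices to check that the right-hand side of \eqref{22} satisfies that relation tested against an arbitrary $Y+\beta$. Note also that neither $\hat\nabla$ nor the derivative term depends on $\eta$, which is consistent with the absence of $\eta$ in \eqref{22}.

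First I would expand the right member of the defining identity. Applying the Leibniz rule to $X(\check h(Y+\beta,Z+\gamma))=X(h(Y,Z))+X(h(h^{-1}(\beta),h^{-1}(\gamma)))$ produces $(\nabla_Xh)(Y,Z)+h(\nabla_XY,Z)+h(Y,\nabla_XZ)$ together with the analogous $(\nabla_Xh)(h^{-1}(\beta),h^{-1}(\gamma))+h(\nabla_X(h^{-1}(\beta)),h^{-1}(\gamma))+h(h^{-1}(\beta),\nabla_X(h^{-1}(\gamma)))$. Substituting $\hat\nabla_{X+\eta}(Y+\beta)=\nabla_XY+h(\nabla_X(h^{-1}(\beta)))$ into the subtracted term $\check h(\hat\nabla_{X+\eta}(Y+\beta),Z+\gamma)$ and using $h^{-1}\circ h=\mathrm{id}$ gives exactly $h(\nabla_XY,Z)+h(\nabla_X(h^{-1}(\beta)),h^{-1}(\gamma))$. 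These two terms cancel against their counterparts above, so the right member collapses to $(\nabla_Xh)(Y,Z)+h(Y,\nabla_XZ)+(\nabla_Xh)(h^{-1}(\beta),h^{-1}(\gamma))+h(h^{-1}(\beta),\nabla_X(h^{-1}(\gamma)))$.

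Next I would compute $\check h$ of $Y+\beta$ against the proposed expression $h^{-1}(\nabla_X(h(Z)))+\nabla_X\gamma$. The tangent slot yields $h(Y,h^{-1}(\nabla_X(h(Z))))=(\nabla_X(h(Z)))(Y)$, and the Leibniz identity $\nabla_X(h(Z))=(\nabla_Xh)(Z)+h(\nabla_XZ)$ recovers $(\nabla_Xh)(Y,Z)+h(Y,\nabla_XZ)$; the cotangent slot yields $h(h^{-1}(\beta),h^{-1}(\nabla_X\gamma))=(\nabla_X\gamma)(h^{-1}(\beta))$, and writing $\gamma=h(h^{-1}(\gamma))$ so that $\nabla_X\gamma=(\nabla_Xh)(h^{-1}(\gamma))+h(\nabla_X(h^{-1}(\gamma)))$ recovers the remaining two terms. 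Matching the two computations closes the argument.

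The main obstacle is purely bookkeeping: one must track the operators $h$ and $h^{-1}$ carefully and run the symmetric and skew-symmetric cases in parallel, since the identity $h(W,h^{-1}(\alpha))=\alpha(W)$ acquires a sign in the skew case; however, $(\nabla_Xh)$ inherits the same symmetry type as $h$, so the two sign flips cancel on each slot and the final matching is identical in both cases. A cleaner route, which I would record as a remark, is to invoke Lemma \ref{l5}: a short computation shows $h(\nabla_X(h^{-1}(\beta)))=\nabla^*_X\beta$ and $h^{-1}(\nabla_X(h(Z)))=\nabla^*_XZ$, so that $\hat\nabla$ and $\hat\nabla^*$ act as the pairs $(\nabla,\nabla^*)$ and $(\nabla^*,\nabla)$ on the two summands; the generalized duality with respect to $\check h$ then reduces immediately to the duality of $\nabla$ and $\nabla^*$ on $M$ furnished by that lemma.
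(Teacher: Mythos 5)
Your proof is correct and is essentially the approach the paper intends: the proposition is imported from \cite{blanan} with only the remark that ``a direct computation'' gives \eqref{22}, and your argument --- uniqueness from non-degeneracy of $\check h$ plus expansion of both sides of the defining relation, with the sign bookkeeping in the skew case handled by noting that $\nabla_X h$ inherits the symmetry type of $h$ --- is exactly that computation carried out in full. Your closing remark, that $h(\nabla_X(h^{-1}\beta)) = \nabla^*_X\beta$ and $h^{-1}(\nabla_X(h(Z))) = \nabla^*_X Z$ so that $\hat\nabla$ and $\hat\nabla^*$ act as $(\nabla,\nabla^*)$ and $(\nabla^*,\nabla)$ on the two summands and duality reduces to Lemma \ref{l5}, is a clean structural alternative worth recording, and it is consistent with the observation in the paper that $T^{\hat\nabla^*}=0$ exactly when $(M,h,\nabla)$ is quasi-statistical.
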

Then
\begin{equation*} \label{j}
T^{{\hat {\nabla}}^*}(X+\eta, Y+\beta)=h^{-1}\Big((\nabla_Xh)Y-(\nabla_Yh)X+h(T^{\nabla}(X,Y))\Big).
\end{equation*}
In particular $T^{{\hat {\nabla}}^*}=0$ if and only if $(M,h,\nabla)$ is a quasi-statistical manifold.\\

For the dualistic structure $(\check{h}; \hat{\nabla},\hat{\nabla}^*)$ considered above, we defined a family of connections, $\{ \hat{\nabla}^{(\alpha)}\}$, on $TM\oplus T^*M$, for any $\alpha \in \mathbb{R}$, called \textit{$\alpha$-connections} \cite{BN1}:
\[
\hat{\nabla}^{(\alpha)}:=\frac{1+\alpha}{2}\hat{\nabla}+\frac{1-\alpha}{2}\hat{\nabla}^*.
\]

\subsubsection{Generalized structures}

Next we shall recall the definitions of some well known generalized structures.

\begin{definition}
A morphism $\hat J:\Gamma^{\infty}(TM\oplus T^*M)\rightarrow \Gamma^{\infty}(TM\oplus T^*M)$ is called
a \textit{generalized almost complex structure on $M$} (respectively, \textit{generalized almost product structure on $M$}), if $\hat J^2=-I$ (respectively, $\hat J^2=I$).
\end{definition}

\begin{definition}
(i) A triple $(\hat J_1,\hat J_2,\hat J_3)$ of generalized almost complex structures $\hat J_1$, $\hat J_2$ and $\hat J_3$ is called a \textit{generalized almost quaternionic, or almost hypercomplex, structure on $M$} if $\hat J_1\hat J_2=-\hat J_2\hat J_1$ and $\hat J_1\hat J_2=\hat J_3$.

(ii) A triple $(\hat J_1,\hat J_2,\hat J_3)$ of a generalized almost complex structure $\hat J_1$ and two generalized almost product structures $\hat J_2$ and $\hat J_3$ is called a \textit{generalized almost para-quaternionic structure on $M$} if $\hat J_1\hat J_2=-\hat J_2\hat J_1$ and $\hat J_1\hat J_2=\hat J_3$.
\end{definition}

\begin{remark}
(i) If $(\hat J_1,\hat J_2,\hat J_3)$ is a generalized almost quaternionic structure, then $\hat J_1\hat J_3=-\hat J_3\hat J_1$ and $\hat J_2\hat J_3=-\hat J_3\hat J_2$.

(ii) If $(\hat J_1,\hat J_2,\hat J_3)$ is a generalized almost para-quaternionic structure, then $\hat J_1\hat J_3=-\hat J_3\hat J_1$ and $\hat J_2\hat J_3=-\hat J_3\hat J_2$.
\end{remark}

In this paper we will study generalized almost quaternionic and generalized almost para-quaternionic structures. However, for the sake of completeness, we add the definition of the other generalized triple structures, too.

\begin{definition}
(iii) Given a pair $(\hat J_1,\hat J_2)$ of a generalized almost complex structure $\hat J_1$ and a generalized almost product structure $\hat J_2$ such that $\hat J_1\hat J_2=\hat J_2\hat J_1$, the triple $(\hat J_1,\hat J_2, \hat J_3:=\hat J_1 \hat J_2)$ is called a \textit{generalized almost complex-product structure}, or a  \textit{generalized almost bicomplex structure on $M$}.

(iv) A triple $(\hat J_1,\hat J_2,\hat J_3)$ of generalized almost product structures $\hat J_1$, $\hat J_2$ and $\hat J_3$ is called a \textit{generalized almost hyperproduct structure on $M$} if $\hat J_1\hat J_2=\hat J_2\hat J_1$ and $\hat J_1\hat J_2=\hat J_3$.
\end{definition}

\subsubsection{Integrability}

In order to define the integrability of generalized structures, we need an appropriate bracket on $T(TM\oplus T^*M)$.
For a fixed affine connection $\nabla$ on $M$, we consider the bracket $[\cdot,\cdot]_{\nabla}$ defined in \cite{nani}:
\[
[X+\eta,Y+\beta]_{\nabla}:=[X,Y]+\nabla_X\beta-\nabla_Y\eta,
\]
for $X,Y\in {\Gamma}^{\infty}(TM)$ and $\eta,\beta\in {\Gamma}^{\infty}(T^*M)$, and we define the integrability of a generalized structure with respect to this bracket.

\begin{definition}
A generalized almost complex (or almost product, or almost quaternionic, or almost para-quaternionic) structure $\hat{J}$ is called {\it $\nabla$-integrable} if its Nijenhuis tensor field $N_{\hat{J}}^{\nabla}$ with respect to $\nabla$:
\[
N_{\hat{J}}^{\nabla}(\sigma, \tau):=[\hat{J}\sigma,\hat{J}\tau]_{\nabla}-\hat{J}[\hat{J}\sigma, \tau]_{\nabla}-\hat{J}[\sigma, \hat{J}\tau]_{\nabla} +\hat{J}^{2}[\sigma, \tau]_{\nabla}
\]
vanishes for any $\sigma,\tau \in {\Gamma}^{\infty}(TM\oplus T^*M)$. In this case, we shall drop the adjective \textit{almost}.
\end{definition}

For generalized almost quaternionic and generalized almost para-quaternionic structures on $M$ we have the following.
\begin{proposition}\label{i} Let $(\hat J_1,\hat J_2,\hat J_3)$ be a generalized almost quaternionic structure on $M$. If $\hat J_1$ and $\hat J_2$ are $\nabla$-integrable, then $\hat J_3$ is $\nabla$-integrable, too.
\end{proposition}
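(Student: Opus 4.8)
The plan is to reduce the vanishing of $N_{\hat J_3}^{\nabla}$ to an algebraic identity forced by the quaternionic relations together with the hypotheses $N_{\hat J_1}^{\nabla}=N_{\hat J_2}^{\nabla}=0$. First I would record the consequences of $\hat J_1\hat J_2=\hat J_3=-\hat J_2\hat J_1$ and $\hat J_i^2=-I$ that will serve as rewriting rules, namely $\hat J_3\hat J_1=\hat J_2$, $\hat J_1\hat J_3=-\hat J_2$, $\hat J_3\hat J_2=-\hat J_1$ and $\hat J_2\hat J_3=\hat J_1$, and note that since $[\cdot,\cdot]_{\nabla}$ is $\mathbb{R}$-bilinear one has $N_{-\hat J}^{\nabla}=N_{\hat J}^{\nabla}$ for any generalized structure $\hat J$. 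It is convenient to introduce the mixed concomitant
\[
M(\sigma,\tau):=[\hat J_1\sigma,\hat J_2\tau]_{\nabla}+[\hat J_2\sigma,\hat J_1\tau]_{\nabla}-\hat J_1[\hat J_2\sigma,\tau]_{\nabla}-\hat J_2[\hat J_1\sigma,\tau]_{\nabla}-\hat J_1[\sigma,\hat J_2\tau]_{\nabla}-\hat J_2[\sigma,\hat J_1\tau]_{\nabla},
\]
which is manifestly symmetric under interchanging $\hat J_1$ and $\hat J_2$.

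Next, starting from $N_{\hat J_3}^{\nabla}(\sigma,\tau)=[\hat J_3\sigma,\hat J_3\tau]_{\nabla}-\hat J_3[\hat J_3\sigma,\tau]_{\nabla}-\hat J_3[\sigma,\hat J_3\tau]_{\nabla}-[\sigma,\tau]_{\nabla}$, I would rewrite the leading term $[\hat J_3\sigma,\hat J_3\tau]_{\nabla}=[\hat J_1\hat J_2\sigma,\hat J_1\hat J_2\tau]_{\nabla}$ by applying $N_{\hat J_1}^{\nabla}(\hat J_2\sigma,\hat J_2\tau)=0$, and then simplify the remaining brackets by substituting $N_{\hat J_2}^{\nabla}(\hat J_3\sigma,\tau)=0$ and $N_{\hat J_2}^{\nabla}(\sigma,\hat J_3\tau)=0$, using $\hat J_2\hat J_3=\hat J_1$ to replace the occurring compositions. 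After collecting terms and using $N_{\hat J_1}^{\nabla}=N_{\hat J_2}^{\nabla}=0$ once more to absorb the pure $\hat J_1$- and $\hat J_2$-brackets, every bracket whose arguments still involve $\hat J_3$ drops out and the surviving terms reassemble exactly into
\[
N_{\hat J_3}^{\nabla}=-\hat J_3\,M.
\]

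The decisive step is then to observe that the very same computation can be run with the roles of $\hat J_1$ and $\hat J_2$ interchanged. Since the hypotheses are symmetric in $\hat J_1,\hat J_2$ and $M$ is invariant under this interchange, the mirrored computation yields $N_{\hat J_2\hat J_1}^{\nabla}=-\hat J_2\hat J_1\,M=\hat J_3\,M$, because $\hat J_2\hat J_1=-\hat J_3$. On the other hand $N_{\hat J_2\hat J_1}^{\nabla}=N_{-\hat J_3}^{\nabla}=N_{\hat J_3}^{\nabla}$ by bilinearity of the bracket, so that $N_{\hat J_3}^{\nabla}=\hat J_3\,M$ as well. Comparing the two expressions gives $\hat J_3\,M=-\hat J_3\,M$, and since $\hat J_3$ is invertible ($\hat J_3^2=-I$) we conclude $M=0$, whence $N_{\hat J_3}^{\nabla}=0$, i.e. $\hat J_3$ is $\nabla$-integrable.

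I expect the main obstacle to be the bookkeeping in the first reduction: one must track roughly a dozen bracket terms and eliminate the mixed compositions in the correct order using the six quaternionic product relations, which is where sign errors are most likely. The conceptual heart, however, is the final symmetry argument. A priori the concomitant $M$ need not vanish, as it measures the failure of $\tfrac{1}{\sqrt2}(\hat J_1+\hat J_2)$ to be $\nabla$-integrable; it is precisely the fact that $N_{\hat J_1\hat J_2}^{\nabla}$ and $N_{\hat J_2\hat J_1}^{\nabla}$ coincide yet differ by the sign of $\hat J_3\,M$ that forces $M=0$. This is the mechanism that upgrades the $\nabla$-integrability of two of the three structures to that of the third.
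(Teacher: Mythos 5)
Your proof is correct, but it takes a genuinely different route from the paper's. The paper's proof rests on a single \emph{unconditional} algebraic identity: $2N_{\hat J_3}^{\nabla}(\sigma,\tau)$ is written as a sum of eight evaluations of $N_{\hat J_1}^{\nabla}$ and $N_{\hat J_2}^{\nabla}$ (at $(\hat J_2\sigma,\hat J_2\tau)$, $(\hat J_1\sigma,\hat J_1\tau)$, $(\sigma,\tau)$, and mixed arguments), so no auxiliary concomitant appears and the conclusion is immediate. You instead derive the \emph{conditional} identity $N_{\hat J_3}^{\nabla}=-\hat J_3 M$ and then kill $M$ by the $\hat J_1\leftrightarrow\hat J_2$ swap combined with the parity fact $N_{-\hat J}^{\nabla}=N_{\hat J}^{\nabla}$. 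I checked your reduction and it does come out right: using exactly $N_{\hat J_1}^{\nabla}(\hat J_2\sigma,\hat J_2\tau)=0$, $N_{\hat J_2}^{\nabla}(\hat J_3\sigma,\tau)=0$, $N_{\hat J_2}^{\nabla}(\sigma,\hat J_3\tau)=0$ and $N_{\hat J_2}^{\nabla}(\sigma,\tau)=0$, one obtains
\[
N_{\hat J_3}^{\nabla}(\sigma,\tau)=-\hat J_3\bigl([\hat J_1\sigma,\hat J_2\tau]_\nabla+[\hat J_2\sigma,\hat J_1\tau]_\nabla\bigr)-\hat J_1[\hat J_1\sigma,\tau]_\nabla-\hat J_1[\sigma,\hat J_1\tau]_\nabla+\hat J_2[\hat J_2\sigma,\tau]_\nabla+\hat J_2[\sigma,\hat J_2\tau]_\nabla,
\]
which equals $-\hat J_3M(\sigma,\tau)$ by $\hat J_3\hat J_1=\hat J_2$ and $\hat J_3\hat J_2=-\hat J_1$; and the mirrored run is legitimate because $(\hat J_2,\hat J_1,\hat J_2\hat J_1=-\hat J_3)$ is again a generalized almost quaternionic structure and $M$ is manifestly swap-invariant. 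As for what each approach buys: the paper's identity holds with no integrability hypothesis, hence controls $N_{\hat J_3}^{\nabla}$ pointwise by $N_{\hat J_1}^{\nabla}$ and $N_{\hat J_2}^{\nabla}$; your argument, besides proving the proposition, isolates $M$, which is precisely the bracket $[\hat J_1,\hat J_2]_\nabla$ the paper only introduces in Section 5 (its two extra terms $\hat J_1\hat J_2[\sigma,\tau]_\nabla+\hat J_2\hat J_1[\sigma,\tau]_\nabla$ cancel by anticommutativity), and it delivers the bonus conclusion $[\hat J_1,\hat J_2]_\nabla=0$ under the hypotheses — anticipating the paper's later theorem linking $\nabla$-integrability, the vanishing of this bracket, and torsion-freeness of the canonical connection. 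Your closing aside is also accurate, since $\tfrac1{\sqrt2}(\hat J_1+\hat J_2)$ squares to $-I$ and $N^{\nabla}_{(\hat J_1+\hat J_2)/\sqrt2}=\tfrac12\bigl(N^{\nabla}_{\hat J_1}+N^{\nabla}_{\hat J_2}+M\bigr)$.
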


\begin{proof} Indeed, from $\hat J_1^2=\hat J_2^2=\hat J_3^2=-I$, $\hat J_3=\hat J_1 \hat J_2$, $\hat J_1 \hat J_2=-\hat J_2 \hat J_1$, a direct computation gives the following formula:
$$2N_{\hat{J_3}}^{\nabla}(\sigma, \tau)=N_{\hat{J_1}}^{\nabla}(\hat J_2\sigma, \hat J_2\tau)+N_{\hat{J_2}}^{\nabla}(\hat J_1\sigma, \hat J_1\tau)-\hat J_1N_{\hat{J_2}}^{\nabla}(\hat J_1\sigma, \tau)-\hat J_1 N_{\hat{J_2}}^{\nabla}(\sigma, \hat J_1\tau)+$$
$$+N_{\hat{J_2}}^{\nabla}(\sigma, \tau)-\hat J_2N_{\hat{J_1}}^{\nabla}(\hat J_2\sigma,\tau)-\hat J_2 N_{\hat{J_1}}^{\nabla}(\sigma, \hat J_2\tau)+N_{\hat{J_1}}^{\nabla}(\sigma, \tau),$$
for any $\sigma, \tau \in \Gamma^\infty(TM\oplus T^*M)$, then the statement.
\end{proof}

\begin{proposition}\label{ii} Let $(\hat J_1,\hat J_2,\hat J_3)$ be a generalized almost para-quaternionic structure on $M$. If $\hat J_1$ and $\hat J_2$ are $\nabla$-integrable, then $\hat J_3$ is $\nabla$-integrable, too. Moreover, if $\hat J_2$ and $\hat J_3$ are $\nabla$-integrable, then $\hat J_1$ is $\nabla$-integrable, too.
\end{proposition}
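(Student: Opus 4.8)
The plan is to establish, for each of the two implications, an explicit algebraic identity that expresses the Nijenhuis tensor of the structure to be shown integrable as an $\mathbb{R}$-bilinear combination of the Nijenhuis tensors of the two structures assumed integrable, exactly in the spirit of the formula proved in Proposition \ref{i}. Both implications reduce to the same algebraic situation---``the product of two anticommuting generalized structures is a third''---and since $N_{\hat J}^{\nabla}$ is a purely formal expression in the bracket $[\cdot,\cdot]_\nabla$ and the operator $\hat J$, these identities are obtained by direct expansion.

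For the first assertion I would start from $\hat J_3=\hat J_1\hat J_2$ and expand
\[
N_{\hat J_3}^{\nabla}(\sigma,\tau)=[\hat J_1\hat J_2\sigma,\hat J_1\hat J_2\tau]_\nabla-\hat J_1\hat J_2[\hat J_1\hat J_2\sigma,\tau]_\nabla-\hat J_1\hat J_2[\sigma,\hat J_1\hat J_2\tau]_\nabla+[\sigma,\tau]_\nabla,
\]
where the last term reads $+[\sigma,\tau]_\nabla$ because $\hat J_3^2=I$. I would then repeatedly insert the definitions of $N_{\hat J_1}^{\nabla}$ and $N_{\hat J_2}^{\nabla}$ at the twisted arguments $\hat J_1\sigma,\hat J_2\sigma,\dots$ and simplify using $\hat J_1^2=-I$, $\hat J_2^2=I$, $\hat J_3^2=I$ and the anticommutation $\hat J_1\hat J_2=-\hat J_2\hat J_1$. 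This yields a formula structurally parallel to the quaternionic one (up to the same overall numerical factor as in Proposition \ref{i}), but with the signs produced by $\hat J_2^2=\hat J_3^2=+I$ in place of $-I$. Every term on the right is then $N_{\hat J_1}^{\nabla}$ or $N_{\hat J_2}^{\nabla}$ evaluated at arguments twisted by $\hat J_1$ or $\hat J_2$, so the vanishing of these two forces $N_{\hat J_3}^{\nabla}=0$. For the second assertion, the key observation is that, since $\hat J_2^2=I$, one has $\hat J_3\hat J_2=\hat J_1\hat J_2^2=\hat J_1$; thus $\hat J_1=\hat J_3\hat J_2$ is again a product of two \emph{anticommuting} generalized almost product structures, as $\hat J_2\hat J_3=-\hat J_3\hat J_2$. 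Running the same expansion with $\hat J_3,\hat J_2$ now playing the roles of the factors, and with $\hat J_1^2=-I$ producing the term $-[\sigma,\tau]_\nabla$, gives an identity expressing $N_{\hat J_1}^{\nabla}$ as a bilinear combination of $N_{\hat J_2}^{\nabla}$ and $N_{\hat J_3}^{\nabla}$ at twisted arguments; hence $N_{\hat J_2}^{\nabla}=N_{\hat J_3}^{\nabla}=0$ implies $N_{\hat J_1}^{\nabla}=0$.

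The main obstacle is purely the sign bookkeeping. Unlike the hypercomplex case of Proposition \ref{i}, the three structures here have mixed squares ($-I$ for $\hat J_1$ and $+I$ for $\hat J_2,\hat J_3$), so when the auxiliary Nijenhuis tensors are inserted, the various free bracket terms $[\hat J_i\sigma,\hat J_j\tau]_\nabla$ and $\hat J_i[\cdots]_\nabla$ carry signs different from the quaternionic situation. One must check that, after using all three squaring relations together with $\hat J_1\hat J_2=-\hat J_2\hat J_1$ and its consequence $\hat J_2\hat J_3=-\hat J_3\hat J_2$, every bracket term not absorbed into some $N_{\hat J_i}^{\nabla}$ cancels exactly, leaving only the claimed combination; the anticommutation relations are precisely what makes this cancellation occur.
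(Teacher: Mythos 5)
Your proposal takes essentially the same route as the paper: the paper's proof consists precisely of the two polarization identities you describe, obtained by direct expansion using $\hat J_1^2=-I$, $\hat J_2^2=\hat J_3^2=I$, $\hat J_3=\hat J_1\hat J_2$ and $\hat J_1\hat J_2=-\hat J_2\hat J_1$ --- one expressing $2N_{\hat J_3}^{\nabla}(\sigma,\tau)$ as a combination of $N_{\hat J_1}^{\nabla}$ and $N_{\hat J_2}^{\nabla}$ at arguments twisted by $\hat J_1,\hat J_2$, the other expressing $2N_{\hat J_1}^{\nabla}(\sigma,\tau)$ in terms of $N_{\hat J_2}^{\nabla}$ and $N_{\hat J_3}^{\nabla}$ via exactly your factorization $\hat J_1=\hat J_3\hat J_2$ with $\hat J_2\hat J_3=-\hat J_3\hat J_2$. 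Your sign bookkeeping is sound (the free bracket terms do cancel pairwise under the stated relations, with the $+[\sigma,\tau]_\nabla$ from $\hat J_3^2=I$ and the $-[\sigma,\tau]_\nabla$ from $\hat J_1^2=-I$ appearing as you predict), so the plan is correct and coincides with the paper's argument.
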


\begin{proof} Indeed, from $\hat J_1^2=-I$, $\hat J_2^2=\hat J_3^2=I$, $\hat J_3=\hat J_1 \hat J_2$, $\hat J_1 \hat J_2=-\hat J_2 \hat J_1$, a direct computation gives the following formulas:
$$2N_{\hat{J_3}}^{\nabla}(\sigma, \tau)=N_{\hat{J_1}}^{\nabla}(\hat J_2\sigma, \hat J_2\tau)+N_{\hat{J_2}}^{\nabla}(\hat J_1\sigma, \hat J_1\tau)-\hat J_1N_{\hat{J_2}}^{\nabla}(\hat J_1\sigma, \tau)-\hat J_1 N_{\hat{J_2}}^{\nabla}(\sigma, \hat J_1\tau)+$$
$$+N_{\hat{J_2}}^{\nabla}(\sigma, \tau)-\hat J_2N_{\hat{J_1}}^{\nabla}(\hat J_2\sigma,\tau)-\hat J_2 N_{\hat{J_1}}^{\nabla}(\sigma, \hat J_2\tau)-N_{\hat{J_1}}^{\nabla}(\sigma, \tau)$$
and
$$2N_{\hat{J_1}}^{\nabla}(\sigma, \tau)=N_{\hat{J_2}}^{\nabla}(\hat J_3\sigma, \hat J_3\tau)+N_{\hat{J_3}}^{\nabla}(\hat J_2\sigma, \hat J_2\tau)-\hat J_2N_{\hat{J_3}}^{\nabla}(\hat J_2\sigma, \tau)-\hat J_2 N_{\hat{J_3}}^{\nabla}(\sigma, \hat J_2\tau)-$$
$$-N_{\hat{J_3}}^{\nabla}(\sigma, \tau)-\hat J_3N_{\hat{J_2}}^{\nabla}(\hat J_3\sigma,\tau)-\hat J_3 N_{\hat{J_2}}^{\nabla}(\sigma, \hat J_3\tau)-N_{\hat{J_2}}^{\nabla}(\sigma, \tau),$$
for any $\sigma, \tau \in \Gamma^\infty(TM\oplus T^*M)$, then the statement.
\end{proof}

\section{Some generalized almost quaternionic and almost
para-quaternionic structures}

\subsection{Generalized almost para-quaternionic structures defined by commuting $h$-symmetric $(1,1)$-tensors fields}

Let $h$ be a non-degenerate symmetric $(0,2)$-tensor field on a smooth manifold $M$. We shall consider
generalized almost para-quaternionic structures induced by $h$-symmetric $(1,1)$-tensor fields, $J_1$ and $J_2$ on $M$, which commute.

Let
\begin{equation} \label{oo}
\hat{J}_-:=\left(
                    \begin{array}{cc}
                      J_1 & -(J_1^2+I)h^{-1} \\
                      h & -J_1^* \\
                    \end{array}
                  \right), \ \ \hat{J}_+:=\left(
                    \begin{array}{cc}
                      J_2 & -(J_2^2-I)h^{-1} \\
                      h & -J_2^* \\
                    \end{array}
                  \right).
\end{equation}

\begin{proposition} \label{1}
Let $h$ be a non-degenerate symmetric $(0,2)$-tensor field and let $J_1$ and $J_2$ be two $h$-symmetric $(1,1)$-tensor fields on $M$ such that
$J_1J_2=J_2J_1$ and $(J_1-J_2)^2=0$. Then $(\hat{J}_-,\hat{J}_+,\hat{J}_-\hat{J}_+)$ is a generalized almost para-quaternionic structure on $M$, where $\hat{J}_-$
is the generalized almost complex structure and $\hat{J}_+$ is the generalized almost product structure given by (\ref{oo}).
\end{proposition}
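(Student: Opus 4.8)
The plan is to verify directly the three conditions in the definition of a generalized almost para-quaternionic structure, namely that $\hat J_-^2=-I$, that $\hat J_+^2=I$, and that $\hat J_-$ and $\hat J_+$ anticommute; the remaining requirement, that $\hat J_3:=\hat J_-\hat J_+$ be a generalized almost product structure, will then follow formally. Throughout I regard $\hat J_-$ and $\hat J_+$ as $2\times 2$ block operators on $\Gamma^\infty(TM\oplus T^*M)$, viewing $h$ as the isomorphism $TM\to T^*M$, $X\mapsto h(X,\cdot)$, and $h^{-1}$ as its inverse. The single algebraic fact I will use repeatedly is that $h$-symmetry of a $(1,1)$-tensor $J$ is equivalent to $hJ=J^*h$, hence to $h^{-1}J^*=Jh^{-1}$ and $hJ^2h^{-1}=(J^*)^2$; I will apply this to both $J_1$ and $J_2$.

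For the two squares I would simply multiply the block matrices. In $\hat J_-^2$ the $(1,1)$-entry gives $J_1^2-(J_1^2+I)h^{-1}h=-I$ because $h^{-1}h=I$; the $(2,2)$-entry gives $-h(J_1^2+I)h^{-1}+(J_1^*)^2=-I$ by $hJ_1^2h^{-1}=(J_1^*)^2$; and the two off-diagonal entries vanish upon inserting $h^{-1}J_1^*=J_1h^{-1}$ and $hJ_1=J_1^*h$. The computation of $\hat J_+^2=I$ is identical with $J_1^2+I$ replaced by $J_2^2-I$, the sign change being exactly what turns $-I$ into $I$ in the diagonal entries.

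The main obstacle, and the only place where the hypotheses $J_1J_2=J_2J_1$ and $(J_1-J_2)^2=0$ enter, is the anticommutation $\hat J_-\hat J_++\hat J_+\hat J_-=0$. Computing the two products blockwise and adding, the off-diagonal entries cancel: the $(2,1)$-entries sum to $h(J_2-J_1)+h(J_1-J_2)=0$, and the $(1,2)$-entries collapse to $0$ after using $h^{-1}J_i^*=J_ih^{-1}$ and commuting $J_1$ with $J_2$. The delicate entries are the diagonal ones. The $(1,1)$-entry of the sum reduces to $2J_1J_2-J_1^2-J_2^2$, which by commutativity equals $-(J_1-J_2)^2$ and so vanishes; the $(2,2)$-entry is its $h$-adjoint, $-(J_1^*-J_2^*)^2=-\bigl((J_1-J_2)^2\bigr)^*$, which vanishes as well. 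Recognizing these combinations as $\pm(J_1-J_2)^2$ — rather than leaving them as four separate quadratic terms — is the crux of the argument, and it is precisely here that both hypotheses are needed simultaneously.

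Finally, once $\hat J_-^2=-I$, $\hat J_+^2=I$ and $\hat J_-\hat J_+=-\hat J_+\hat J_-$ are established, I would conclude without further computation:
\[
(\hat J_-\hat J_+)^2=\hat J_-(\hat J_+\hat J_-)\hat J_+=-\hat J_-(\hat J_-\hat J_+)\hat J_+=-\hat J_-^2\,\hat J_+^2=I,
\]
so $\hat J_3=\hat J_-\hat J_+$ is a generalized almost product structure. Together with $\hat J_-\hat J_+=-\hat J_+\hat J_-$ and $\hat J_-\hat J_+=\hat J_3$, this exhibits $(\hat J_-,\hat J_+,\hat J_-\hat J_+)$ as a generalized almost para-quaternionic structure, as required.
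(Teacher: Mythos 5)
Your proposal is correct and follows essentially the same route as the paper: a direct blockwise computation of $\hat J_-^2$, $\hat J_+^2$, $\hat J_-\hat J_+$ and $\hat J_+\hat J_-$ using the identities $hJ_i=J_i^*h$, $J_ih^{-1}=h^{-1}J_i^*$ and $(J_1J_2)^*=J_2^*J_1^*$, with the hypotheses $J_1J_2=J_2J_1$ and $(J_1-J_2)^2=0$ entering exactly where you place them, in the anticommutation. Your additional touches --- packaging the diagonal entries of $\hat J_-\hat J_+ + \hat J_+\hat J_-$ as $-(J_1-J_2)^2$ and $-\bigl((J_1-J_2)^2\bigr)^*$, and deriving $(\hat J_-\hat J_+)^2=I$ formally from the three relations --- merely make explicit what the paper leaves as ``hence the conclusion.''
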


\begin{proof}
We have
\[
\hat{J}_-\hat{J}_+=\left(
                    \begin{array}{cc}
                      J_1J_2-(J_1^2+I) & -J_1(J_2^2-I)h^{-1}+(J_1^2+I)h^{-1}J_2^* \\
                      h J_2-J_1^* h & -h (J_2^2-I)h^{-1}+J_1^*J_2^* \\
                    \end{array}
                  \right)
\]
and
\[
\hat{J}_+\hat{J}_-=\left(
                    \begin{array}{cc}
                      J_2J_1-(J_2^2-I) & -J_2(J_1^2+I)h^{-1}+(J_2^2-I)h^{-1}J_1^* \\
                      h J_1-J_2^* h & -h (J_1^2+I)h^{-1}+J_2^*J_1^* \\
                    \end{array}
                  \right)
\]
and taking into account that
\[
h J_i=J_i^* h, \ \ J_ih^{-1}=h^{-1}J_i^*, \ i=1,2, \ \ (J_1J_2)^*=J_2^*J_1^*,
\]
we get $\hat J_-^2=-I$, $\hat J_+^2=I$, $\hat{J}_-\hat{J}_+=-\hat{J}_+\hat{J}_-$,
hence the conclusion.
\end{proof}

For $J_1=J_2$, we have
\begin{equation} \label{op1}
\hat{J}_-:=\left(
                    \begin{array}{cc}
                      J & -(J^2+I)h^{-1} \\
                      h & -J^* \\
                    \end{array}
                  \right), \ \ \hat{J}_+:=\left(
                    \begin{array}{cc}
                      J & -(J^2-I)h^{-1} \\
                      h & -J^* \\
                    \end{array}
                  \right)
\end{equation}
and we can state the following.

\begin{corollary} \label{v}
If $h$ is a non-degenerate symmetric $(0,2)$-tensor field and $J$ is a $h$-symmetric $(1,1)$-tensor field on $M$,
then $(\hat{J}_-,\hat{J}_+,\hat{J}_-\hat{J}_+)$ is a generalized almost para-quaternionic structure on $M$, where $\hat{J}_-$
is the generalized almost complex structure
and $\hat{J}_+$ is the generalized almost product structure
given by (\ref{op1}).
\end{corollary}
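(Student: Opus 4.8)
The plan is to deduce this corollary directly from Proposition \ref{1} by specializing to the case $J_1 = J_2 = J$. First I would observe that a single $h$-symmetric $(1,1)$-tensor field $J$ automatically satisfies both hypotheses imposed on the pair $(J_1, J_2)$ in Proposition \ref{1}: the commutation relation $J_1 J_2 = J_2 J_1$ holds trivially, since $JJ = JJ$, and the nilpotency condition $(J_1 - J_2)^2 = 0$ holds vacuously because $J_1 - J_2 = 0$. Thus the algebraic input required by the proposition is present for free.

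Next I would check that, under the substitution $J_1 = J_2 = J$, the matrices (\ref{oo}) collapse exactly to the matrices (\ref{op1}): replacing $J_1$ by $J$ in $\hat{J}_-$ produces the displayed $\hat{J}_-$, and replacing $J_2$ by $J$ in $\hat{J}_+$ produces the displayed $\hat{J}_+$. Consequently, the two generalized structures named in the corollary coincide with the two structures that the proposition manufactures in this particular case.

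With both hypotheses verified, Proposition \ref{1} applies and yields at once that $\hat{J}_-$ is a generalized almost complex structure (so $\hat{J}_-^2 = -I$), that $\hat{J}_+$ is a generalized almost product structure (so $\hat{J}_+^2 = I$), and that the two anticommute, $\hat{J}_- \hat{J}_+ = -\hat{J}_+ \hat{J}_-$. By the definition of a generalized almost para-quaternionic structure, the triple $(\hat{J}_-, \hat{J}_+, \hat{J}_- \hat{J}_+)$ is therefore such a structure, which is the desired conclusion.

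Since the corollary is a pure specialization, there is essentially no obstacle; the only point deserving attention is the elementary bookkeeping check that the two conditions of the proposition are met by a single tensor, which they are automatically. One could alternatively give a self-contained argument by computing $\hat{J}_-^2$, $\hat{J}_+^2$ and $\hat{J}_- \hat{J}_+ + \hat{J}_+ \hat{J}_-$ directly from (\ref{op1}), using the intertwining relations $hJ = J^* h$ and $J h^{-1} = h^{-1} J^*$ that follow from the $h$-symmetry of $J$, but invoking the proposition is shorter and avoids repeating the block-matrix multiplications already carried out there.
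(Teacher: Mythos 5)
Your proof is correct and matches the paper exactly: the paper states this corollary without a separate proof precisely because it is the specialization $J_1=J_2=J$ of Proposition \ref{1}, whose hypotheses ($J_1J_2=J_2J_1$ and $(J_1-J_2)^2=0$) hold trivially in that case, just as you verify. Nothing further is needed.
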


\begin{remark}
If $J$ is an almost Norden structure on $M$, then:
\begin{equation*} \label{op}
\hat{J}_-:=\left(
                    \begin{array}{cc}
                      J & 0 \\
                      h & -J^* \\
                    \end{array}
                  \right), \ \ \hat{J}_+:=\left(
                    \begin{array}{cc}
                      J & 2h^{-1} \\
                      h & -J^* \\
                    \end{array}
                  \right)
\end{equation*}
and if $J$ is an almost para-Norden structure on $M$, then:
\begin{equation*} \label{op}
\hat{J}_-:=\left(
                    \begin{array}{cc}
                      J & -2h^{-1} \\
                      h & -J^* \\
                    \end{array}
                  \right), \ \ \hat{J}_+:=\left(
                    \begin{array}{cc}
                      J &0 \\
                      h & -J^* \\
                    \end{array}
                  \right).
\end{equation*}
\end{remark}

Denote by
\begin{equation} \label{jj}
\hat J=:\hat{J}_-\hat{J}_+=\left(
                    \begin{array}{cc}
                      -I & 2Jh^{-1} \\
                      0 & I \\
                    \end{array}
                  \right).
                  \end{equation}
We can state the following results.

\begin{proposition} \label{mm3}
Let $\nabla$ be an affine connection, let $h$ be a non-degenerate symmetric $(0,2)$-tensor field, let $J$ be a $h$-symmetric $(1,1)$-tensor field on $M$, and let $\hat J$ be given by (\ref{jj}). Then the following assertions are equivalent:

(i) $\hat{\nabla}\hat{J}=0$;

(ii) $\nabla J=0$;

(iii) $\hat{\nabla}^*\hat{J}=0$.
\end{proposition}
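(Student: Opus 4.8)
The plan is to reduce each of the three conditions to a single statement about $\nabla J$ by computing the relevant covariant derivatives of $\hat J$ directly from the matrix form (\ref{jj}) and the defining formulas for $\hat\nabla$ and $\hat\nabla^*$. Throughout I write a section of $TM\oplus T^*M$ as $Y+\beta$ with $Y\in\Gamma^\infty(TM)$, $\beta\in\Gamma^\infty(T^*M)$, and record that (\ref{jj}) gives $\hat J(Y+\beta)=\big(-Y+2Jh^{-1}(\beta)\big)+\beta$.

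First I establish (i) $\Leftrightarrow$ (ii). Using $\hat\nabla_{X+\eta}(Y+\beta)=\nabla_XY+h(\nabla_X(h^{-1}(\beta)))$ together with $h^{-1}\circ h=\mathrm{id}$, I expand $(\hat\nabla_{X+\eta}\hat J)(Y+\beta)=\hat\nabla_{X+\eta}(\hat J(Y+\beta))-\hat J(\hat\nabla_{X+\eta}(Y+\beta))$. The cotangent components cancel, and the tangent component collapses, via the Leibniz rule $\nabla_X(J(h^{-1}(\beta)))=(\nabla_XJ)(h^{-1}(\beta))+J(\nabla_X(h^{-1}(\beta)))$, to
\[
(\hat\nabla_{X+\eta}\hat J)(Y+\beta)=2(\nabla_XJ)(h^{-1}(\beta)).
\]
Since $h^{-1}$ is an isomorphism $T^*M\to TM$, the vector $h^{-1}(\beta)$ runs over all of $\Gamma^\infty(TM)$ as $\beta$ varies, so the right-hand side vanishes identically if and only if $\nabla J=0$. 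Notably, this first step does not use the $h$-symmetry of $J$.

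Next I treat (iii) $\Leftrightarrow$ (ii) in the same spirit, now using the expression (\ref{22}) for $\hat\nabla^*$, namely $\hat\nabla^*_{X+\eta}(Z+\gamma)=h^{-1}(\nabla_X(h(Z)))+\nabla_X\gamma$. Here the $h$-symmetry of $J$ enters through the relations $hJ=J^*h$ and $Jh^{-1}=h^{-1}J^*$ already recorded in the proof of Proposition~\ref{1}, which let me rewrite $h(-Y+2Jh^{-1}(\beta))=-h(Y)+2J^*(\beta)$. After cancellation of the cotangent components and one application of the Leibniz rule, the computation reduces to
\[
(\hat\nabla^*_{X+\eta}\hat J)(Y+\beta)=2h^{-1}((\nabla_XJ^*)(\beta)),
\]
so that (iii) holds if and only if $\nabla J^*=0$.

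The one point requiring justification beyond routine computation—and which bridges the two halves—is the equivalence $\nabla J=0\Leftrightarrow\nabla J^*=0$. I will prove the pointwise identity $\nabla_XJ^*=(\nabla_XJ)^*$, where $(\cdot)^*$ denotes the transpose endomorphism of $T^*M$: evaluating both sides on arbitrary $\beta$ and $Y$ and using $(\nabla_X\beta)(Y)=X(\beta(Y))-\beta(\nabla_XY)$ together with $(J^*\beta)(Y)=\beta(JY)$, the terms $X(\beta(JY))$ cancel and one is left with $((\nabla_XJ^*)\beta)(Y)=\beta((\nabla_XJ)(Y))$. Since transposition is a linear isomorphism, $\nabla J^*=(\nabla J)^*=0$ if and only if $\nabla J=0$, and combining this with the two displayed formulas yields (i) $\Leftrightarrow$ (ii) $\Leftrightarrow$ (iii). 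I expect the bookkeeping of tangent versus cotangent components in the second computation to be the most error-prone part, but no genuine difficulty arises once the symmetry relations $hJ=J^*h$ and $Jh^{-1}=h^{-1}J^*$ are in hand.
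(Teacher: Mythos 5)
Your proposal is correct and takes essentially the same route as the paper's own proof: both reduce (i) and (iii) to the pointwise formulas $(\hat{\nabla}_{X+\eta}\hat{J})(Y+\beta)=2(\nabla_XJ)(h^{-1}(\beta))$ and $(\hat{\nabla}^*_{X+\eta}\hat{J})(Y+\beta)=2h^{-1}((\nabla_XJ^*)\beta)$ via the relations $hJ=J^*h$, $Jh^{-1}=h^{-1}J^*$, and then conclude through the duality identity $((\nabla_XJ^*)\beta)(Y)=\beta((\nabla_XJ)Y)$. The only difference is cosmetic: you spell out the verification of that last identity, which the paper merely cites, and you correctly note in passing that the first equivalence does not use the $h$-symmetry of $J$.
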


\begin{proof}
For any $X,Y\in {\Gamma}^{\infty}(TM)$ and $\eta,\beta \in {\Gamma}^{\infty}(T^*M)$, we have
$$(\hat{\nabla}_{X+\eta}\hat{J})(Y+\beta):=\hat{\nabla}_{X+\eta}(\hat{J}(Y+\beta))-\hat{J}(\hat{\nabla}_{X+\eta}(Y+\beta))=$$
$$=\hat{\nabla}_{X+\eta}\Big(-Y+2J(h^{-1}(\beta))+\beta\Big)-\hat J\Big(\nabla_XY+h(\nabla_X(h^{-1}(\beta)))\Big)=$$
$$=\nabla_X\Big(-Y+2J(h^{-1}(\beta))\Big)+h(\nabla_X(h^{-1}(\beta)))-$$
$$-\Big(-\nabla_XY+2J(\nabla_X(h^{-1}(\beta)))+h(\nabla_X(h^{-1}(\beta)))\Big)=2(\nabla_XJ)(h^{-1}(\beta))$$
and we get (i) $\Longleftrightarrow$ (ii). Also:
$$(\hat{\nabla}^*_{X+\eta}\hat{J})(Y+\beta):=\hat{\nabla}^*_{X+\eta}(\hat{J}(Y+\beta))-\hat{J}(\hat{\nabla}^*_{X+\eta}(Y+\beta))=$$
$$=\hat{\nabla}^*_{X+\eta}\Big(-Y+2J(h^{-1}(\beta))+\beta\Big)-\hat J\Big(h^{-1}(\nabla_X(h(Y)))+\nabla_X\beta\Big)=$$
$$=h^{-1}(\nabla_X(h(-Y+2J(h^{-1}(\beta)))))+\nabla_X\beta-\Big(-h^{-1}(\nabla_X(h(Y)))+2J(h^{-1}(\nabla_X\beta))+\nabla_X\beta\Big)=$$
$$=2\Big(h^{-1}(\nabla_X(h(J(h^{-1}(\beta)))))-J(h^{-1}(\nabla_X\beta))\Big)=$$
$$=2\Big(h^{-1}(\nabla_X J^*\beta)-h^{-1}(J^*(\nabla_X\beta))\Big)=2h^{-1}((\nabla_XJ^*)\beta)$$
and (ii) $\Longleftrightarrow$ (iii) follows since $((\nabla_XJ^*)\beta)(Y)=\beta((\nabla_XJ)Y)$, for any $X,Y\in {\Gamma}^{\infty}(TM)$ and $\beta \in {\Gamma}^{\infty}(T^*M)$.
\end{proof}

\begin{corollary}
Let $\nabla$ be an affine connection, let $h$ be a non-degenerate symmetric $(0,2)$-tensor field, let $J$ be a $h$-symmetric $(1,1)$-tensor field on $M$, and let $\hat J$ be given by (\ref{jj}).
Then $\hat{\nabla}^{(\alpha)}\hat{J}=0$, for any $\alpha\in \mathbb{R}$, if and only if $\nabla J=0$.
\end{corollary}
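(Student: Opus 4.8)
The plan is to reduce the statement entirely to Proposition \ref{mm3} by exploiting the fact that $\hat{\nabla}^{(\alpha)}$ is an \emph{affine} combination of $\hat\nabla$ and $\hat\nabla^*$. First I would record the elementary but crucial observation that, although an arbitrary linear combination of connections need not be a connection, the coefficients $\frac{1+\alpha}{2}$ and $\frac{1-\alpha}{2}$ sum to $1$, so $\hat{\nabla}^{(\alpha)}$ is again an affine connection on $TM\oplus T^*M$; and, more importantly, the covariant derivative of the \emph{fixed} $(1,1)$-tensor field $\hat J$ depends affine-linearly on the connection. Concretely, for $\sigma,\tau\in\Gamma^{\infty}(TM\oplus T^*M)$,
\[
(\hat{\nabla}^{(\alpha)}_{\sigma}\hat J)(\tau)=\frac{1+\alpha}{2}(\hat{\nabla}_{\sigma}\hat J)(\tau)+\frac{1-\alpha}{2}(\hat{\nabla}^*_{\sigma}\hat J)(\tau),
\]
which follows at once from the defining identity $(\hat{\nabla}_{\sigma}\hat J)(\tau)=\hat{\nabla}_{\sigma}(\hat J\tau)-\hat J(\hat{\nabla}_{\sigma}\tau)$, since both the Leibniz term and the correction term split according to the two pieces of $\hat{\nabla}^{(\alpha)}$.

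Granting this identity, both implications are immediate. For sufficiency, if $\nabla J=0$ then Proposition \ref{mm3} yields simultaneously $\hat{\nabla}\hat J=0$ and $\hat{\nabla}^*\hat J=0$; substituting into the displayed formula gives $\hat{\nabla}^{(\alpha)}\hat J=0$ for every $\alpha\in\mathbb{R}$. For necessity, suppose $\hat{\nabla}^{(\alpha)}\hat J=0$ holds for all $\alpha$; specializing to $\alpha=1$ makes $\hat{\nabla}^{(1)}=\hat{\nabla}$, so $\hat{\nabla}\hat J=0$, and Proposition \ref{mm3} then returns $\nabla J=0$ (specializing instead to $\alpha=-1$, which gives $\hat{\nabla}^*$, would serve equally well).

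I do not expect any genuine obstacle here: the content is carried entirely by Proposition \ref{mm3}, and the only thing to check is the linearity of $\hat J\mapsto\hat{\nabla}^{(\cdot)}\hat J$ in the connection parameter, a one-line consequence of the definitions. If an explicit verification is preferred over the abstract linearity remark, one can instead insert the two formulas obtained in the proof of Proposition \ref{mm3}, namely $(\hat{\nabla}_{X+\eta}\hat J)(Y+\beta)=2(\nabla_X J)(h^{-1}(\beta))$ and $(\hat{\nabla}^*_{X+\eta}\hat J)(Y+\beta)=2h^{-1}((\nabla_X J^*)\beta)$, to get
\[
(\hat{\nabla}^{(\alpha)}_{X+\eta}\hat J)(Y+\beta)=(1+\alpha)(\nabla_X J)(h^{-1}(\beta))+(1-\alpha)h^{-1}((\nabla_X J^*)\beta),
\]
from which both directions again follow using $((\nabla_X J^*)\beta)(Y)=\beta((\nabla_X J)Y)$.
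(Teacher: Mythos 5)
Your proposal is correct and follows essentially the same route as the paper: both rest on the identity $(\hat{\nabla}^{(\alpha)}_{\sigma}\hat J)(\tau)=\frac{1+\alpha}{2}(\hat{\nabla}_{\sigma}\hat J)(\tau)+\frac{1-\alpha}{2}(\hat{\nabla}^*_{\sigma}\hat J)(\tau)$ combined with Proposition \ref{mm3}. Your specialization to $\alpha=1$ (or $\alpha=-1$) for the necessity direction merely makes explicit a step the paper leaves implicit, so there is nothing further to add.
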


\begin{proof}
Since for any $X,Y\in {\Gamma}^{\infty}(TM)$ and $\eta,\beta \in {\Gamma}^{\infty}(T^*M)$, we have
\[
(\hat{\nabla}^{(\alpha)}_{X+\eta}\hat{J})(Y+\beta)=\frac{1+\alpha}{2}(\hat{\nabla}_{X+\eta}\hat{J})(Y+\beta)+
\frac{1-\alpha}{2}(\hat{\nabla}^*_{X+\eta}\hat{J})(Y+\beta),
\]
we get the conclusion from Proposition \ref{mm3}.
\end{proof}

\begin{proposition}\label{dddd}
Let $\nabla$ be an affine connection, let $h$ be a non-degenerate symmetric $(0,2)$-tensor field, and let $J$ be a $h$-symmetric $(1,1)$-tensor field on $M$.
Then the generalized almost product structure
$\hat J$ given by (\ref{jj})
is $\nabla$-integrable if and only if
\[
(\nabla_{JX}h)(JY)-(\nabla_{JY}h)(JX)+h(T^{\nabla}(JX,JY))=(\nabla_{JX}J^*)(h(Y))-(\nabla_{JY}J^*)(h(X)),
\]
for any $X,Y\in {\Gamma}^{\infty}(TM)$.
\end{proposition}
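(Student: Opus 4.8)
The plan is to compute the $\nabla$-Nijenhuis tensor $N_{\hat J}^\nabla(X+\eta, Y+\beta)$ directly from its definition and the explicit matrix form of $\hat J$ in (\ref{jj}), and then to reorganize the outcome into the stated identity. Since $\hat J^2=I$, the last summand of $N_{\hat J}^\nabla$ becomes $+[\sigma,\tau]_\nabla$, so I only have to evaluate the four $\nabla$-brackets $[\hat J\sigma, \hat J\tau]_\nabla$, $\hat J[\hat J\sigma, \tau]_\nabla$, $\hat J[\sigma, \hat J\tau]_\nabla$ and $[\sigma,\tau]_\nabla$ for $\sigma=X+\eta$ and $\tau=Y+\beta$. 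Writing $\hat J(X+\eta)=(-X+2J(h^{-1}(\eta)))+\eta$ and using $[X+\eta, Y+\beta]_\nabla=[X,Y]+\nabla_X\beta-\nabla_Y\eta$, each bracket splits into a tangent and a cotangent component.

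First I would check that the cotangent component of $N_{\hat J}^\nabla$ vanishes identically: because the lower-right block of $\hat J$ is the identity, the four $1$-form contributions telescope to zero. For the tangent component, the Lie-bracket terms recombine, by bilinearity of $[\cdot,\cdot]$, into $[X+X', Y+Y']$ with $X'=-X+2J(h^{-1}(\eta))$ and $Y'=-Y+2J(h^{-1}(\beta))$; since $X+X'=2J(h^{-1}(\eta))$ and $Y+Y'=2J(h^{-1}(\beta))$, the vectors $X$ and $Y$ drop out entirely, and the same cancellation occurs in the connection terms. I expect to obtain
\[
N_{\hat J}^\nabla(X+\eta, Y+\beta)=4\Big([J(h^{-1}\eta), J(h^{-1}\beta)]-J\big(h^{-1}(\nabla_{J(h^{-1}\eta)}\beta-\nabla_{J(h^{-1}\beta)}\eta)\big)\Big),
\]
which depends only on $\eta$ and $\beta$. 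Since $h$ is non-degenerate, vanishing of $N_{\hat J}^\nabla$ for all $\sigma,\tau$ is then equivalent to vanishing of this expression for all $\eta,\beta$, equivalently, after setting $\eta=h(X)$ and $\beta=h(Y)$ and relabelling, to
\[
[JX, JY]=J\big(h^{-1}(\nabla_{JX}(h(Y))-\nabla_{JY}(h(X)))\big)\quad\text{for all } X,Y\in \Gamma^\infty(TM).
\]

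The remaining step converts this intrinsic bracket condition into the tensorial identity. Applying $h$ to both sides and using that $J$ is $h$-symmetric, i.e.\ $h\circ J=J^*\circ h$ and hence $h\circ J\circ h^{-1}=J^*$ on $1$-forms, turns the right-hand side into $J^*\big(\nabla_{JX}(h(Y))-\nabla_{JY}(h(X))\big)$. On the left-hand side I would replace $[JX,JY]$ by $\nabla_{JX}(JY)-\nabla_{JY}(JX)-T^\nabla(JX,JY)$, then expand $h(\nabla_{JX}(JY))$ through the Leibniz rules $\nabla_{JX}(h(JY))=(\nabla_{JX}h)(JY)+h(\nabla_{JX}(JY))$ together with $h(JY)=J^*(h(Y))$ and $\nabla_{JX}(J^*(hY))=(\nabla_{JX}J^*)(hY)+J^*(\nabla_{JX}(hY))$, and symmetrically with $X,Y$ interchanged.

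The decisive point, and the step I expect to demand the most care, is that after this expansion the terms $J^*(\nabla_{JX}(h(Y)))$ and $-J^*(\nabla_{JY}(h(X)))$ appear with identical coefficients on the two sides and cancel; what survives is precisely
\[
(\nabla_{JX}h)(JY)-(\nabla_{JY}h)(JX)+h(T^\nabla(JX,JY))=(\nabla_{JX}J^*)(h(Y))-(\nabla_{JY}J^*)(h(X)),
\]
the claimed condition, and reading the equivalences backwards yields the converse. The only genuine bookkeeping hazards are sign-tracking across the four brackets in the first step and pairing the Leibniz expansions correctly so that the $J^*\nabla$-terms cancel rather than reinforce.
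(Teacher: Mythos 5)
Your proposal is correct and follows essentially the same route as the paper: it computes $N_{\hat J}^{\nabla}$ directly from the $\nabla$-bracket, finds that only the ``form--form'' part survives (your single formula in general arguments $X+\eta$, $Y+\beta$ subsumes the paper's three cases $N_{\hat J}^{\nabla}(X,Y)=N_{\hat J}^{\nabla}(X,h(Y))=0$ and the expression for $N_{\hat J}^{\nabla}(h(X),h(Y))$), and then performs the same Leibniz expansion via $h\circ J=J^*\circ h$ to cancel the $J^*(\nabla_{JX}(h(Y)))$ terms and arrive at the stated identity. Your sign bookkeeping is in fact cleaner than the paper's, whose intermediate display carries a sign slip on the $h(T^{\nabla}(JX,JY))$ term while your computation matches the proposition as stated.
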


\begin{proof}
For any $X,Y\in {\Gamma}^{\infty}(TM)$, we have
\[
N_{\hat{J}}^{\nabla}(X,Y)=0, \ \ N_{\hat{J}}^{\nabla}(X,h(Y))=0,
\]
\[
N_{\hat{J}}^{\nabla}(h(X),h(Y))=4\Big([JX,JY]-J\big(h^{-1}(\nabla_{JX}(h(Y))-\nabla_{JY}(h(X)))\big)\Big).
\]

Taking into account that
\[
J\big(h^{-1}(\nabla_{JX}(h(Y)))\big)=h^{-1}(\nabla_{JX}(h(JY)))-h^{-1}((\nabla_{JX}J^*)(h(Y)))=
\]
$$=h^{-1}((\nabla_{JX}h)(JY))+\nabla_{JX}JY-h^{-1}((\nabla_{JX}J^*)(h(Y))),$$
we infer
$$N_{\hat{J}}^{\nabla}(h(X),h(Y))=4\Big(h^{-1}\big((\nabla_{JX}J^*)(h(Y))-(\nabla_{JY}J^*)(h(X))\big)-$$$$-
h^{-1}\big((\nabla_{JX}h)(JY)-(\nabla_{JY}h)(JX)-h(T^{\nabla}(JX,JY))\big)\Big).$$
Then the proof is complete.
\end{proof}

By means of Proposition \ref{dddd} we can characterize $\nabla$-integrability of $\hat J$ in terms of quasi-statistical structures.

\begin{lemma}
Let $(M,h,\nabla)$ be a quasi-statistical manifold and let $J$ be a $h$-symmetric $(1,1)$-tensor field on $M$. Then
the generalized almost product structure $\hat J$ given by (\ref{jj}) is $\nabla$-integrable if and only if
\[
(\nabla_{JX}J^*)(h(Y))=(\nabla_{JY}J^*)(h(X)),
\]
for any $X,Y\in {\Gamma}^{\infty}(TM)$.
\end{lemma}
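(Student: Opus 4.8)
The plan is to read this statement off directly from Proposition~\ref{dddd} once the quasi-statistical hypothesis is imposed; no new computation is needed beyond interpreting the criterion of that proposition. First I would invoke Proposition~\ref{dddd}, which applies verbatim since $h$ is non-degenerate symmetric and $J$ is $h$-symmetric: the generalized almost product structure $\hat J$ of (\ref{jj}) is $\nabla$-integrable if and only if
\[
(\nabla_{JX}h)(JY)-(\nabla_{JY}h)(JX)+h(T^{\nabla}(JX,JY))=(\nabla_{JX}J^*)(h(Y))-(\nabla_{JY}J^*)(h(X))
\]
for all $X,Y\in\Gamma^{\infty}(TM)$, this being an equality of $1$-forms on $M$.

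The key step is to recognize the left-hand side as $d^{\nabla}h$ evaluated at the $J$-rotated arguments $(JX,JY,\cdot)$. Evaluating that left-hand side on an arbitrary $Z\in\Gamma^{\infty}(TM)$ gives
\[
(\nabla_{JX}h)(JY,Z)-(\nabla_{JY}h)(JX,Z)+h(T^{\nabla}(JX,JY),Z),
\]
which is exactly $(d^{\nabla}h)(JX,JY,Z)$ by the defining formula of $d^{\nabla}h$.

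Finally, since $(M,h,\nabla)$ is a quasi-statistical manifold we have $d^{\nabla}h=0$, so $(d^{\nabla}h)(JX,JY,Z)=0$ for every $Z$ and the left-hand side is the zero $1$-form. Hence the criterion of Proposition~\ref{dddd} collapses to
\[
(\nabla_{JX}J^*)(h(Y))=(\nabla_{JY}J^*)(h(X)),
\]
which is the asserted condition, establishing both implications simultaneously. There is essentially no obstacle here: the argument is purely the substitution of the identity $d^{\nabla}h=0$ into an already-established criterion. The only point requiring care is the bookkeeping, namely that the equalities in Proposition~\ref{dddd} are between $1$-forms and that the combination $(\nabla_{JX}h)(JY)-(\nabla_{JY}h)(JX)+h(T^{\nabla}(JX,JY))$ is literally $d^{\nabla}h$ applied at $JX,JY$.
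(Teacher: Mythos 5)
Your proposal is correct and coincides with the paper's own (implicit) argument: the lemma is stated there as an immediate consequence of Proposition~\ref{dddd}, since the left-hand side of its criterion is exactly $(d^{\nabla}h)(JX,JY,\cdot)$, which vanishes on a quasi-statistical manifold. Your identification of that combination with $d^{\nabla}h$ evaluated at the $J$-rotated arguments is precisely the intended one-line reduction.
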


\begin{proposition}
Let $\nabla$ be an affine connection, let $h$ be a non-degenerate symmetric $(0,2)$-tensor field, and let $J$ be an invertible $\nabla$-parallel $h$-symmetric $(1,1)$-tensor field on $M$.
Then the generalized almost product structure $\hat J$ given by (\ref{jj}) is $\nabla$-integrable if and only if
$(h,\nabla)$ is a quasi-statistical structure on $M$.
\end{proposition}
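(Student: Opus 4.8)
The plan is to read the statement off directly from Proposition \ref{dddd}, exploiting the two extra hypotheses ($\nabla$-parallelism and invertibility of $J$) that were not assumed there. First I would invoke Proposition \ref{dddd}, by which the generalized almost product structure $\hat J$ of (\ref{jj}) is $\nabla$-integrable precisely when
\[
(\nabla_{JX}h)(JY)-(\nabla_{JY}h)(JX)+h(T^{\nabla}(JX,JY))=(\nabla_{JX}J^*)(h(Y))-(\nabla_{JY}J^*)(h(X))
\]
holds for all $X,Y\in\Gamma^{\infty}(TM)$. The goal is then to simplify both sides under the standing assumption $\nabla J=0$.

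The key first step is to show that the entire right-hand side vanishes. For this I would use the identity $((\nabla_XJ^*)\beta)(Y)=\beta((\nabla_XJ)Y)$ already recorded in the proof of Proposition \ref{mm3}: since $\nabla J=0$, this forces $\nabla J^*=0$ as well, whence $(\nabla_{JX}J^*)(h(Y))=(\nabla_{JY}J^*)(h(X))=0$. Thus the integrability condition collapses to the $1$-form identity
\[
(\nabla_{JX}h)(JY)-(\nabla_{JY}h)(JX)+h(T^{\nabla}(JX,JY))=0
\]
for all $X,Y\in\Gamma^{\infty}(TM)$.

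Next I would evaluate this identity of covectors on an arbitrary $Z\in\Gamma^{\infty}(TM)$. Comparing with the definition of $d^{\nabla}h$ from the quasi-statistical definition, the left-hand side is exactly $(d^{\nabla}h)(JX,JY,Z)$, so $\nabla$-integrability of $\hat J$ is equivalent to $(d^{\nabla}h)(JX,JY,Z)=0$ for all $X,Y,Z$. The final step uses the invertibility of $J$: the assignment $X\mapsto JX$ is a bijection of $\Gamma^{\infty}(TM)$, so writing $U=JX$, $V=JY$ and letting them range over all sections shows this is equivalent to $(d^{\nabla}h)(U,V,Z)=0$ for all $U,V,Z$, i.e. to $d^{\nabla}h=0$. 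By definition this means exactly that $(h,\nabla)$ is a quasi-statistical structure on $M$, which completes the equivalence.

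I expect no serious obstacle: the proposition is essentially a corollary of Proposition \ref{dddd}. The only two points requiring care are the implication $\nabla J=0\Rightarrow\nabla J^*=0$, which kills the right-hand side, and the use of the invertibility of $J$ to upgrade the twisted vanishing $(d^{\nabla}h)(J\cdot,J\cdot,\cdot)=0$ to the full condition $d^{\nabla}h=0$. The $h$-symmetry of $J$ is what guarantees, via Corollary \ref{v}, that $\hat J$ is a genuine generalized almost product structure, but it plays no further role in this reduction.
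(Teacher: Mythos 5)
Your proposal is correct and takes essentially the paper's route: the paper states this proposition without proof, immediately after Proposition \ref{dddd}, precisely because it follows by the specialization you carry out --- $\nabla J=0$ forces $\nabla J^*=0$ via $((\nabla_XJ^*)\beta)(Y)=\beta((\nabla_XJ)Y)$, killing the right-hand side, and the remaining left-hand side evaluated on $Z$ is $(d^{\nabla}h)(JX,JY,Z)$, which the invertibility of $J$ makes equivalent to $d^{\nabla}h=0$. Nothing further is needed.
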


On the other hand, we have the following.
\begin{proposition} Let $(M,h,\nabla)$ be a quasi-statistical manifold and let $J$ be a $h$-symmetric, $\nabla$-parallel $(1,1)$-tensor field on $M$ such that the Nijenhuis tensor of $J$, $N_J$, vanishes. Then
the generalized structures $\hat J_\mp:=\left(
                    \begin{array}{cc}
                      J & -(J^2\pm I)h^{-1} \\
                      h & -J^* \\
                    \end{array}
                  \right)$ are $\nabla$-integrable. Moreover, $\hat \nabla^* \hat J_{\mp}=0$.
\end{proposition}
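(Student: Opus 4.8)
The plan is to prove the two assertions in the order opposite to their statement: first establish the parallelism $\hat\nabla^*\hat J_\mp=0$, and then deduce the $\nabla$-integrability of $\hat J_\mp$ as a formal consequence, using that on a quasi-statistical manifold $\hat\nabla^*$ is torsion-free. The underlying principle is the classical fact that a structure which is parallel for a torsion-free connection is integrable; the task is to carry it over to the $\nabla$-bracket.

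I would first record the algebraic identities forced by the hypotheses. The $h$-symmetry of $J$ gives $hJ=J^*h$ and $Jh^{-1}=h^{-1}J^*$, while $\nabla J=0$ yields $\nabla J^*=0$, since $((\nabla_XJ^*)\beta)(Y)=\beta((\nabla_XJ)Y)$, exactly as in the proof of Proposition \ref{mm3}. Writing $\hat J_\mp(Y+\beta)=\big(JY-(J^2\pm I)h^{-1}(\beta)\big)+\big(h(Y)-J^*(\beta)\big)$ and inserting it into the defining formula (\ref{22}) for $\hat\nabla^*$, I would evaluate $(\hat\nabla^*_{X+\eta}\hat J_\mp)(Y+\beta)=\hat\nabla^*_{X+\eta}(\hat J_\mp(Y+\beta))-\hat J_\mp(\hat\nabla^*_{X+\eta}(Y+\beta))$ component by component. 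Using $h\big((J^2\pm I)h^{-1}(\beta)\big)=((J^*)^2\pm I)(\beta)$, the parallelism $\nabla J^*=0$, and the commutation relations to move $J$ through $h^{-1}$, the $TM$-parts and the $T^*M$-parts of the two terms coincide, so their difference is zero; this proves $\hat\nabla^*\hat J_\mp=0$. I note that this step uses only $\nabla J=0$ and the $h$-symmetry of $J$.

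For the integrability I would use the following purely formal identity, obtained by substituting $[\alpha,\beta]_\nabla=D_\alpha\beta-D_\beta\alpha-T^{D}(\alpha,\beta)$ into the four bracket terms defining $N_{\hat J}^\nabla$ and then expanding with the derivation rule of $D$: for any connection $D$ on $TM\oplus T^*M$ and any $\hat J$ with $\hat J^2=\pm I$,
\begin{align*}
N_{\hat J}^\nabla(\sigma,\tau)=&\,(D_{\hat J\sigma}\hat J)\tau-(D_{\hat J\tau}\hat J)\sigma-\hat J(D_\sigma\hat J)\tau+\hat J(D_\tau\hat J)\sigma\\
&-T^{D}(\hat J\sigma,\hat J\tau)+\hat J\,T^{D}(\hat J\sigma,\tau)+\hat J\,T^{D}(\sigma,\hat J\tau)-\hat J^{2}\,T^{D}(\sigma,\tau),
\end{align*}
where $T^{D}(\sigma,\tau):=D_\sigma\tau-D_\tau\sigma-[\sigma,\tau]_\nabla$ is the torsion of $D$ with respect to the $\nabla$-bracket (this is the same convention under which the excerpt records $T^{\hat\nabla}$ and $T^{\hat\nabla^*}$). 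Taking $D=\hat\nabla^*$, the first line vanishes by the parallelism just proved, and the second line vanishes because $T^{\hat\nabla^*}=0$ on a quasi-statistical manifold, as noted after Proposition \ref{p2}. Hence $N_{\hat J_\mp}^\nabla=0$, i.e. $\hat J_\mp$ are $\nabla$-integrable.

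I expect the only real difficulty to be the bookkeeping in the parallelism computation: the block-matrix action together with the raising and lowering of indices by $h$, and careful tracking of the $\mp$ sign. The identity in the third paragraph is a formal rearrangement, valid for the $\nabla$-bracket precisely because its derivation uses nothing beyond the derivation property of $D$ and the definition of $T^{D}$; the cancellation of the first-order terms is automatic. Finally I would remark that this route does not use the hypothesis $N_J=0$; that assumption is what one would invoke in an alternative, direct evaluation of $N_{\hat J_-}^\nabla$ and $N_{\hat J_+}^\nabla$ via the $\nabla$-bracket, where the classical Nijenhuis tensor of $J$ appears explicitly.
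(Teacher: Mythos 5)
Your proposal is correct, and it takes a genuinely different route from the paper. The paper's proof is a brute-force evaluation of the Nijenhuis tensor on the three types of pairs, computing $N^\nabla_{\hat J_\mp}(X,Y)$, $N^\nabla_{\hat J_\mp}(X,h(Y))$ and $N^\nabla_{\hat J_\mp}(h(X),h(Y))$ explicitly: each resulting term is either $N_J$ or an expression of the form $(\nabla_Uh)V-(\nabla_Vh)U+h(T^\nabla(U,V))$, i.e.\ a $d^\nabla h$-type term, so everything vanishes under the hypotheses; the parallelism $\hat\nabla^*\hat J_\mp=0$ is then asserted via a separate direct computation (the one you carry out in your second paragraph, which indeed only needs $\nabla J=0$, $\nabla J^*=0$ and $hJ=J^*h$). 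You instead prove parallelism first and deduce integrability from the formal identity relating $N^{\nabla}_{\hat J}$ to $D\hat J$ and $T^D$ — an identity which, as you say, is purely definitional (it in fact holds for arbitrary $\hat J$, with no assumption $\hat J^2=\pm I$), together with the fact recorded after Proposition \ref{p2} that $T^{\hat\nabla^*}=0$ exactly on quasi-statistical manifolds; I checked the identity and the parallelism computation, and both are sound. What each approach buys: the paper's explicit component formulas are valid without the quasi-statistical assumption and isolate precisely which $d^\nabla h$-terms obstruct integrability, which is what feeds the surrounding propositions; your argument is shorter, conceptual, and transfers verbatim to any structure parallel for a torsion-free generalized connection. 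Your closing remark can even be sharpened: since the paper's computation gives $N^\nabla_{\hat J_\mp}(X,Y)=N_J(X,Y)$ once the $d^\nabla h$-terms vanish, while your argument gives $N^\nabla_{\hat J_\mp}=0$, consistency forces $N_J=0$ — and indeed one verifies directly from $\nabla J=0$, $hJ=J^*h$ and $d^\nabla h=0$ that $h(N_J(X,Y))=\big((\nabla_{JX}h)(JY)-(\nabla_{JY}h)(JX)\big)+\big((\nabla_Yh)(J^2X)-(\nabla_{JX}h)(JY)\big)+\big((\nabla_{JY}h)(JX)-(\nabla_Xh)(J^2Y)\big)+\big((\nabla_Xh)(J^2Y)-(\nabla_Yh)(J^2X)\big)=0$, so the hypothesis $N_J=0$ is actually redundant under the remaining assumptions, a fact your route exposes and the paper's does not.
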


\begin{proof} A direct computation gives the following:
$$N^\nabla_{\hat J_\mp}(X,Y)=N_J(X,Y)+\Big((\nabla_{JX}h)Y-(\nabla_{Y}h)JX+h(T^\nabla(JX,Y))\Big)+$$
$$+\Big((\nabla_{X}h)JY-(\nabla_{JY}h)X+h(T^\nabla(X,JY))\Big)+(J^2\pm I)h^{-1}\Big((\nabla_{X}h)Y-(\nabla_{Y}h)X+h(T^\nabla(X,Y))\Big),$$
$$N^\nabla_{\hat J_\mp}(h(X),h(Y))=h^{-1}\Big((\nabla_{(J^2 \pm I)Y}h)(J^2\pm I)X-(\nabla_{(J^2\pm I)X}h)(J^2\pm I)Y)+$$
$$+h(T^\nabla((J^2\pm I)Y,(J^2\pm I)X))\Big),$$
$$N^\nabla_{\hat J_\mp}(X,h(Y))=h^{-1}\Big((\nabla_{JX}h)(J^2\pm I)Y-(\nabla_{(J^2\pm I)Y}h)(JX)+h(T^\nabla(JX,(J^2\pm I)Y))\Big)+$$
$$-J\Big(h^{-1}\Big((\nabla_{X}h)(J^2\pm I)Y-(\nabla_{(J^2\pm I)Y}h)X+h(X,T^\nabla((J^2\pm I)Y))\Big)\Big)-$$
$$-\Big((\nabla_{X}h)(J^2\pm I)Y-(\nabla_{(J^2 \pm I)Y}h)X+h(T^\nabla(X,(J^2\pm I)Y))\Big),$$
for any $X,Y\in \Gamma^\infty(TM)$.
Moreover, a direct computation gives $\hat \nabla^* \hat J_{\mp}=0$.
\end{proof}

Based on the previous results, we can state.

\begin{corollary}\label{th} Let $(M,h,\nabla)$ be a quasi-statistical manifold and let $J$ be a $h$-symmetric, $\nabla$-parallel $(1,1)$-tensor field on $M$ such that the Nijenhuis tensor of $J$, $N_J$, vanishes. Then
the generalized structures $\hat J_\mp$ and $\hat J$ given by (\ref{op1}) and (\ref{jj}) are $\nabla$-integrable and $\hat \nabla^* $ is a torsion-free generalized affine connection that parallelizes the generalized para-quaternionic structure
$ (\hat J_-,\hat J_+, \hat J)$.
\end{corollary}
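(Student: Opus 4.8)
The plan is to assemble the statement from results already in place, since the hypotheses here coincide exactly with those of the Proposition immediately preceding the corollary, together with the standing assumptions of Corollary~\ref{v}. First I would record that, taking $J_1=J_2=J$ in Corollary~\ref{v}, the triple $(\hat J_-,\hat J_+,\hat J)$ with $\hat J=\hat J_-\hat J_+$ as in (\ref{jj}) is a generalized almost para-quaternionic structure, with $\hat J_-$ the generalized almost complex structure and $\hat J_+$ the generalized almost product structure. Thus it only remains to check $\nabla$-integrability of the three structures, torsion-freeness of $\hat\nabla^*$, and that $\hat\nabla^*$ parallelizes the triple.

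For integrability, the $\nabla$-integrability of $\hat J_-$ and $\hat J_+$ is precisely the conclusion of the Proposition immediately preceding this corollary, whose hypotheses ($(M,h,\nabla)$ quasi-statistical, and $J$ being $h$-symmetric, $\nabla$-parallel with $N_J=0$) are ours. For the third structure $\hat J=\hat J_-\hat J_+$, I would invoke Proposition~\ref{ii}: identifying $\hat J_1=\hat J_-$, $\hat J_2=\hat J_+$ and $\hat J_3=\hat J$, the first implication there states that $\nabla$-integrability of $\hat J_1$ and $\hat J_2$ forces $\nabla$-integrability of $\hat J_3$. This route needs no fresh computation. Alternatively, one can note that $\nabla J=0$ gives $\nabla J^*=0$ via the duality $((\nabla_XJ^*)\beta)(Y)=\beta((\nabla_XJ)Y)$ used in Proposition~\ref{mm3}, so the criterion of the Lemma characterizing $\nabla$-integrability of $\hat J$ on a quasi-statistical manifold, namely $(\nabla_{JX}J^*)(h(Y))=(\nabla_{JY}J^*)(h(X))$, holds trivially, both sides vanishing.

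For the connection $\hat\nabla^*$, torsion-freeness is immediate from the torsion formula recorded after Proposition~\ref{p2}, which says $T^{\hat\nabla^*}=0$ exactly when $(M,h,\nabla)$ is quasi-statistical; this is our hypothesis. For parallelization, $\hat\nabla^*\hat J_\mp=0$ is the ``Moreover'' clause of the preceding Proposition, while $\hat\nabla^*\hat J=0$ follows from the equivalence (ii)$\Leftrightarrow$(iii) of Proposition~\ref{mm3}, since $\nabla J=0$. Equivalently, once $\hat J_-$ and $\hat J_+$ are $\hat\nabla^*$-parallel, the composition $\hat J=\hat J_-\hat J_+$ is automatically $\hat\nabla^*$-parallel by the Leibniz rule for the induced connection on endomorphisms. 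Together these exhibit $\hat\nabla^*$ as a torsion-free generalized affine connection parallelizing $(\hat J_-,\hat J_+,\hat J)$.

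Since the analytic content has already been carried out in the preceding Proposition, I do not expect a substantial new obstacle here; the only points requiring care are bookkeeping ones — matching the roles of the almost complex and almost product factors so that Proposition~\ref{ii} genuinely applies to $\hat J=\hat J_-\hat J_+$, and observing that the single quasi-statistical hypothesis simultaneously supplies the integrability criteria and the vanishing of $T^{\hat\nabla^*}$.
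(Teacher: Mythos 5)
Your proof is correct and takes essentially the same approach as the paper, whose own ``proof'' is simply the assembly you describe (``Based on the previous results, we can state''): the para-quaternionic structure from Corollary~\ref{v}, the $\nabla$-integrability of $\hat J_\mp$ together with $\hat\nabla^*\hat J_\mp=0$ from the Proposition immediately preceding the corollary, $\hat\nabla^*\hat J=0$ from Proposition~\ref{mm3} since $\nabla J=0$, the $\nabla$-integrability of $\hat J$ from Proposition~\ref{ii} (or, as you note, trivially from the Lemma's criterion), and $T^{\hat\nabla^*}=0$ from the quasi-statistical hypothesis via the remark following Proposition~\ref{p2}. Your extra observations (the Leibniz-rule alternative for $\hat\nabla^*\hat J=0$ and the role-matching of the complex and product factors in Proposition~\ref{ii}) are sound bookkeeping, not a genuinely different route.
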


For $J_1=J_2=0$, we have
\begin{equation} \label{or}
\hat{J}_{-}:=\left(
                    \begin{array}{cc}
                      0 & -h^{-1} \\
                      h & 0 \\
                    \end{array}
                  \right), \ \ \\
\hat{J}_{+}:=\left(
                    \begin{array}{cc}
                      0 & h^{-1} \\
                      h & 0 \\
                    \end{array}
                  \right), \ \ \\
\hat J=:\hat{J}_{-}\hat{J}_{+}=\left(
                    \begin{array}{cc}
                      -I & 0 \\
                      0 & I \\
                    \end{array}
                  \right)
\end{equation}
and we can state the following.

\begin{proposition}
Let $\nabla$ be an affine connection, let $h$ be a non-degenerate symmetric $(0,2)$-tensor field, and let $\hat J_{\mp}$ and $\hat J$ be given by (\ref{or}).
Then

(i) $\hat{\nabla}^{(\alpha)}\hat{J}_{\mp}=0$ and $\hat{\nabla}^{(\alpha)}\hat{J}=0$, for any $\alpha\in \mathbb{R}$; in particular, $\hat \nabla \hat J_{\mp}=0$, $\hat \nabla \hat J=0$, $\hat \nabla^* \hat J_{\mp}=0$ and $\hat \nabla^* \hat J=0$;

(ii) the generalized structures
$\hat J_{\mp}$ and $\hat J$ given by (\ref{or}) are $\nabla$-integrable.
\end{proposition}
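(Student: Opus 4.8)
The plan is to prove the two assertions separately, reducing each to single-structure computations already available. For assertion (i) I would first record that, by definition of the $\alpha$-connection and since it is a convex combination, for any generalized endomorphism $\Phi$ one has $\hat\nabla^{(\alpha)}\Phi=\frac{1+\alpha}{2}\hat\nabla\Phi+\frac{1-\alpha}{2}\hat\nabla^*\Phi$; hence it suffices to show $\hat\nabla\hat J_\mp=\hat\nabla^*\hat J_\mp=0$ and $\hat\nabla\hat J=\hat\nabla^*\hat J=0$, the ``in particular'' clause and the statement for all $\alpha$ then following by linearity. For $\hat J$ this is immediate, since $\hat J$ is the endomorphism $(\ref{jj})$ with $J=0$ and Proposition \ref{mm3} applies with the trivially satisfied hypothesis $\nabla 0=0$. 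For $\hat J_\mp$, writing $\hat J_\mp(X+\eta)=\mp h^{-1}(\eta)+h(X)$, I would substitute into $\hat\nabla_{X+\eta}(Y+\beta)=\nabla_XY+h(\nabla_X(h^{-1}(\beta)))$ and into $\hat\nabla^*_{X+\eta}(Y+\beta)=h^{-1}(\nabla_X(h(Y)))+\nabla_X\beta$, using only $h\circ h^{-1}=h^{-1}\circ h=\mathrm{id}$; the terms in $(\hat\nabla_{X+\eta}\hat J_\mp)(Y+\beta)$ and in $(\hat\nabla^*_{X+\eta}\hat J_\mp)(Y+\beta)$ then cancel in pairs, with no hypothesis on $h$ or $\nabla$. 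This is the routine part.

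For assertion (ii) the key remark is that the $\nabla$-bracket is the torsion bracket of $\hat\nabla^*$: from the formulas in the excerpt one checks directly that $[\sigma,\tau]_\nabla=\hat\nabla^*_\sigma\tau-\hat\nabla^*_\tau\sigma-T^{\hat\nabla^*}(\sigma,\tau)$ (and the analogue for $\hat\nabla$). Substituting this into the definition of $N^\nabla_\Phi$ and using that $\Phi$ is $\hat\nabla^*$-parallel by (i), all covariant-derivative terms cancel and one is left with the pure torsion combination
\[
N^\nabla_\Phi(\sigma,\tau)=-\Big(T^{\hat\nabla^*}(\Phi\sigma,\Phi\tau)-\Phi\,T^{\hat\nabla^*}(\Phi\sigma,\tau)-\Phi\,T^{\hat\nabla^*}(\sigma,\Phi\tau)+\Phi^2\,T^{\hat\nabla^*}(\sigma,\tau)\Big).
\]
Because $T^{\hat\nabla^*}(X+\eta,Y+\beta)=h^{-1}\big((\nabla_Xh)Y-(\nabla_Yh)X+h(T^\nabla(X,Y))\big)$ takes values in $TM$ and depends only on the $TM$-parts of its arguments, for $\Phi=\hat J=\mathrm{diag}(-I,I)$ the four terms cancel identically and $N^\nabla_{\hat J}=0$, with no extra hypothesis.

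The step I expect to be the real obstacle is the integrability of $\hat J_\mp$. Feeding $\Phi=\hat J_\mp$ into the displayed formula (equivalently, specializing to $J=0$ the Nijenhuis formulas of the proposition preceding Corollary \ref{th}) forces, on $TM$-arguments, $N^\nabla_{\hat J_\mp}(X,Y)=\pm T^{\hat\nabla^*}(X,Y)=\pm h^{-1}\big((\nabla_Xh)Y-(\nabla_Yh)X+h(T^\nabla(X,Y))\big)$, and $N^\nabla_{\hat J_\mp}(h(X),h(Y))$ is likewise a nonzero multiple of $T^{\hat\nabla^*}$. These are not identically zero for arbitrary $h$ and $\nabla$; they vanish exactly when $T^{\hat\nabla^*}=0$, that is, by the remark following Proposition \ref{p2}, exactly when $(M,h,\nabla)$ is quasi-statistical. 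Thus, although (i) holds unconditionally, the integrability half of (ii) for $\hat J_\mp$ appears to need the quasi-statistical hypothesis, consistent with Corollary \ref{th}, of which this is the $J=0$ case. My proof would therefore add ``$(M,h,\nabla)$ quasi-statistical'' to (ii) (after which $N^\nabla_{\hat J_\mp}=0$ is immediate from the displayed torsion formula), or else state the unconditional part of (ii) for $\hat J$ only.
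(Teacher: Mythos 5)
Your part (i) is correct and is essentially the paper's argument: Proposition \ref{mm3} applied with $J=0$ gives $\hat\nabla\hat J=\hat\nabla^*\hat J=0$ for $\hat J$, and your two substitutions show $(\hat\nabla_{X+\eta}\hat J_\mp)(Y+\beta)=0$ and $(\hat\nabla^*_{X+\eta}\hat J_\mp)(Y+\beta)=0$ using only $h\circ h^{-1}=\mathrm{id}$ (a check the paper leaves implicit, since Proposition \ref{mm3} literally covers only $\hat J$); the statement for every $\alpha$ then follows by linearity of $\hat\nabla^{(\alpha)}=\frac{1+\alpha}{2}\hat\nabla+\frac{1-\alpha}{2}\hat\nabla^*$, exactly as in the paper.

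For (ii) your route differs from the paper's laconic ``Proposition \ref{dddd} and a direct computation'', and it is sharper. The identity $[\sigma,\tau]_\nabla=\hat\nabla^*_\sigma\tau-\hat\nabla^*_\tau\sigma-T^{\hat\nabla^*}(\sigma,\tau)$ is correct (it is precisely how the displayed formula for $T^{\hat\nabla^*}$ after Proposition \ref{p2} is derived), and combined with the $\hat\nabla^*$-parallelism from (i) it does reduce each Nijenhuis tensor to the pure torsion combination you display; since $T^{\hat\nabla^*}$ is $TM$-valued and depends only on the vector parts of its arguments, while $\hat J(X+\eta)=-X+\eta$, the four terms cancel and $N^\nabla_{\hat J}=0$ unconditionally, in agreement with Proposition \ref{dddd} at $J=0$. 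Most importantly, the obstruction you isolate for $\hat J_\mp$ is genuine, and here it is the paper's statement rather than your proof that is at fault: a direct computation (equivalently, setting $J=0$ in the Nijenhuis formulas of the proposition preceding Corollary \ref{th}) gives
\[
N^\nabla_{\hat J_\mp}(X,Y)=\pm\,h^{-1}\big((\nabla_Xh)Y-(\nabla_Yh)X+h(T^\nabla(X,Y))\big)=\pm\,T^{\hat\nabla^*}(X,Y),
\]
with analogous nonzero multiples on mixed and coform arguments, and this vanishes for all $X,Y$ precisely when $d^\nabla h=0$, i.e., when $(M,h,\nabla)$ is quasi-statistical. A concrete counterexample to the unconditional claim: on $M=\mathbb R^2$ with the standard flat connection and $h=e^x(dx\otimes dx+dy\otimes dy)$, one finds $N^\nabla_{\hat J_-}(\partial_x,\partial_y)=h^{-1}(e^x\,dy)=\partial_y\neq 0$, although all hypotheses of the proposition hold. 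So assertion (ii) is unconditional only for $\hat J$; for $\hat J_\mp$ the quasi-statistical hypothesis must be added, after which the claim is exactly the $J=0$ case of Corollary \ref{th}. Your proposed emendation is the correct reading of the proposition.
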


\begin{proof}
(i) follows from Proposition \ref{mm3} and (ii) follows from Proposition \ref{dddd} and by a direct computation.
\end{proof}

\subsection{Generalized triple structures defined by two $h$-symmetric $(1,1)$-tensors fields and two morphisms from $T^*M$ to $TM$}

For general $h$-symmetric $(1,1)$-tensor fields, $J_1$ and $J_2$, we shall consider the following generalized almost quaternionic and generalized almost para-quaternionic structures.

Let
\begin{equation} \label{os}
\hat{J}_i:=\left(
                    \begin{array}{cc}
                      J_i & H_i \\
                      h & -J_i^* \\
                    \end{array}
                  \right), \ i=1,2,
\end{equation}
where $H_i:\Gamma^{\infty}(T^*M)\rightarrow \Gamma^{\infty}(TM)$ are two morphisms.

\begin{proposition} \label{sdk}
Let $h$ be a non-degenerate symmetric $(0,2)$-tensor field, let $J_i$ be two $h$-symmetric $(1,1)$-tensor fields on $M$, $i=1,2$, and let
$\hat{J}_i$ be given by (\ref{os}), where $H_i:\Gamma^{\infty}(T^*M)\rightarrow \Gamma^{\infty}(TM)$ are two morphisms. Then $\hat{J}_1\hat{J}_2=-\hat{J}_2\hat{J}_1$ if and only if
$$\left\{
    \begin{array}{ll}
      J_1J_2+J_2J_1=-(H_1+H_2)h \\
      J_1^*J_2^*+J_2^*J_1^*=-h(H_1+H_2) \\
      J_1H_2-H_2J_1^*=H_1J_2^*-J_2H_1
    \end{array}
  \right..
$$
\end{proposition}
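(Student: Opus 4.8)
The plan is to treat each $\hat{J}_i$ as the $2\times 2$ block operator in (\ref{os}) acting on a section $X+\eta$ via $\hat{J}_i(X+\eta)=(J_iX+H_i\eta)+(hX-J_i^*\eta)$, and then to compute the two composites $\hat{J}_1\hat{J}_2$ and $\hat{J}_2\hat{J}_1$ by the usual row-by-column rule for operators. The anticommutation identity $\hat{J}_1\hat{J}_2=-\hat{J}_2\hat{J}_1$ is equivalent to the simultaneous vanishing of the four blocks of the sum $\hat{J}_1\hat{J}_2+\hat{J}_2\hat{J}_1$, so the whole proof reduces to reading off those four block equations and simplifying.

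First I would record the blocks of $\hat{J}_1\hat{J}_2$: the $TM\to TM$ block is $J_1J_2+H_1h$, the $T^*M\to TM$ block is $J_1H_2-H_1J_2^*$, the $TM\to T^*M$ block is $hJ_2-J_1^*h$, and the $T^*M\to T^*M$ block is $hH_2+J_1^*J_2^*$. Swapping $1\leftrightarrow 2$ gives the blocks of $\hat{J}_2\hat{J}_1$. Adding and setting each block to zero yields $J_1J_2+J_2J_1=-(H_1+H_2)h$ from the $TM\to TM$ block, $J_1^*J_2^*+J_2^*J_1^*=-h(H_1+H_2)$ from the $T^*M\to T^*M$ block, $J_1H_2-H_1J_2^*+J_2H_1-H_2J_1^*=0$ from the $T^*M\to TM$ block (which rearranges to the third stated equation $J_1H_2-H_2J_1^*=H_1J_2^*-J_2H_1$), and finally $h(J_1+J_2)-(J_1^*+J_2^*)h=0$ from the $TM\to T^*M$ block.

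The key observation, and really the only non-routine point, is that the last of these four equations is vacuous. Because each $J_i$ is $h$-symmetric, one has $hJ_i=J_i^*h$ for $i=1,2$, exactly as recorded in the proof of Proposition \ref{1}; hence $h(J_1+J_2)=(J_1^*+J_2^*)h$ holds identically and the $TM\to T^*M$ block imposes no constraint. Thus the anticommutation condition collapses to precisely the three equations in the statement, and since no block was discarded and the argument is reversible, this gives the claimed equivalence in both directions at once.

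The hard part will not be conceptual but a matter of type-bookkeeping: one must keep straight that $h$ is the morphism $TM\to T^*M$ induced by the tensor field, that $J_i^*$ is its $T^*M\to T^*M$ adjoint, and that $H_i$ maps $T^*M\to TM$, so that every composition appearing in the block products is type-consistent. The single structural identity doing all the work is the $h$-symmetry relation $hJ_i=J_i^*h$, which is exactly what annihilates the $TM\to T^*M$ block and leaves only the three listed conditions.
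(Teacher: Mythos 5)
Your proposal is correct and follows the paper's own proof essentially verbatim: the paper likewise computes the four blocks of $\hat{J}_1\hat{J}_2$ and $\hat{J}_2\hat{J}_1$ by block multiplication and invokes $hJ_i=J_i^*h$ to discard the $TM\to T^*M$ block, leaving the three stated conditions. Your write-up merely makes explicit (helpfully) that the $h$-symmetry relation is exactly what renders that block vacuous, so nothing further is needed.
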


\begin{proof}
We have
\[
\hat{J}_1\hat{J}_2=\left(
                    \begin{array}{cc}
                      J_1J_2+H_1h & J_1H_2-H_1J_2^* \\
                      h J_2-J_1^* h & hH_2+J_1^*J_2^* \\
                    \end{array}
                  \right)
\]
and
\[
\hat{J}_2\hat{J}_1=\left(
                    \begin{array}{cc}
                      J_2J_1+H_2h & J_2H_1-H_2J_1^* \\
                      h J_1-J_2^* h & hH_1+J_2^*J_1^* \\
                    \end{array}
                  \right)
\]
and taking into account that
\[
h J_i=J_i^* h, \ i=1,2,
\]
we get the conclusion.
\end{proof}

Let $H_i=\lambda_iI$, $ i=1,2$. Then
\begin{equation} \label{om}
\hat{J}_i:=\left(
                    \begin{array}{cc}
                      J_i & \lambda_ih^{-1} \\
                      h & -J_i^* \\
                    \end{array}
                  \right)
, \ \ \lambda_i\in \mathbb R, \ i=1,2,
\end{equation}
hence $$\hat{J}_i^2=\left(
                    \begin{array}{cc}
                      J_i^2 & 0 \\
                      0 & (J_i^*)^2 \\
                    \end{array}
                  \right)+\lambda_i I
$$ and we remark that

(i) $\hat{J}_i$ is a generalized almost complex structure if and only if $J_i^2=-(\lambda_i+1)I$;

(ii) $\hat{J}_i$ is a generalized almost product structure if and only if $J_i^2=-(\lambda_i-1)I$;

(iii) $\hat{J}_1\hat{J}_2=-\hat{J}_2\hat{J}_1$ if and only if $J_1J_2+J_2J_1=-(\lambda_1+\lambda_2)I$.

\begin{proposition}
Let $h$ be a non-degenerate symmetric $(0,2)$-tensor field, let $J_i$ be two $h$-symmetric $(1,1)$-tensor fields on $M$, $i=1,2$, such that $J_i^2=-(\lambda_i+1)I$, for $\lambda_i\in \mathbb R$, and $J_1J_2+J_2J_1=-(\lambda_1+\lambda_2)I$, and let
$\hat{J}_i$ be given by (\ref{om}). Then $(\hat{J}_1,\hat{J}_2,\hat{J}_1\hat{J}_2)$ is a generalized almost quaternionic structure on $M$.
\end{proposition}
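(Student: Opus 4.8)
The plan is to check directly the three defining conditions of a generalized almost quaternionic structure for the triple $(\hat J_1,\hat J_2,\hat J_1\hat J_2)$, reusing the block computation of $\hat J_i^2$ and the observations (i)--(iii) recorded immediately after (\ref{om}), which already isolate exactly the scalar identities needed. Concretely, I must show that $\hat J_1$ and $\hat J_2$ are generalized almost complex structures, that they anticommute, and that their product $\hat J_1\hat J_2$ is again a generalized almost complex structure; the requirement $\hat J_3:=\hat J_1\hat J_2$ is then built in by construction.

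First I would read off from the displayed expression for $\hat J_i^2$ that
\[
\hat J_i^2=\left(\begin{array}{cc} J_i^2 & 0 \\ 0 & (J_i^*)^2\end{array}\right)+\lambda_i I,
\]
so that the hypothesis $J_i^2=-(\lambda_i+1)I$ --- whose cotangent transpose gives $(J_i^*)^2=-(\lambda_i+1)I$, since $(cI)^*=cI$ --- forces $\hat J_i^2=-I$ for $i=1,2$. This is precisely observation (i). For the anticommutation I would invoke Proposition \ref{sdk} with $H_i=\lambda_i h^{-1}$. Its three conditions then read $J_1J_2+J_2J_1=-(\lambda_1+\lambda_2)I$, its cotangent transpose $J_1^*J_2^*+J_2^*J_1^*=-(\lambda_1+\lambda_2)I$, and $J_1H_2-H_2J_1^*=H_1J_2^*-J_2H_1$. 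The first is one of the standing hypotheses; the second is its $h$-adjoint and so is automatic; and the third holds trivially, because $h$-symmetry gives $J_ih^{-1}=h^{-1}J_i^*$, which makes both sides of the third identity vanish. Hence $\hat J_1\hat J_2=-\hat J_2\hat J_1$, which is observation (iii).

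Finally I would set $\hat J_3:=\hat J_1\hat J_2$ and verify it is a generalized almost complex structure using the two facts just established:
\[
\hat J_3^2=(\hat J_1\hat J_2)(\hat J_1\hat J_2)=-\hat J_1^2\hat J_2^2=-(-I)(-I)=-I.
\]
Thus $(\hat J_1,\hat J_2,\hat J_3)$ is a triple of generalized almost complex structures satisfying $\hat J_1\hat J_2=-\hat J_2\hat J_1$ and $\hat J_1\hat J_2=\hat J_3$, i.e., a generalized almost quaternionic structure. I do not expect any genuine obstacle here: the argument is an assembly of observations (i) and (iii) with Proposition \ref{sdk}, and the only step carrying real content is the short algebraic identity showing $\hat J_3^2=-I$, which uses the anticommutation to move $\hat J_2$ past $\hat J_1$.
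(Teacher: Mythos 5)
Your proposal is correct and follows essentially the same route as the paper: the paper's proof consists precisely of the three checks $\hat J_1^2=-I$, $\hat J_2^2=-I$, and $\hat J_1\hat J_2=-\hat J_2\hat J_1$, which rest on the observations (i) and (iii) after (\ref{om}) and on Proposition \ref{sdk} with $H_i=\lambda_i h^{-1}$, exactly as you assemble them. You merely make explicit two details the paper leaves implicit --- that $(J_i^*)^2=-(\lambda_i+1)I$ follows by transposition and that $\hat J_3^2=(\hat J_1\hat J_2)^2=-\hat J_1^2\hat J_2^2=-I$ via the anticommutation --- both of which are correct.
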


\begin{proof}
We have $\hat J_1^2=-I$, $\hat J_2^2=-I$, $\hat{J}_1\hat{J}_2=-\hat{J}_2\hat{J}_1$,
hence the conclusion.
\end{proof}

\begin{proposition}
Let $h$ be a non-degenerate symmetric $(0,2)$-tensor field, let $J_i$ be two $h$-symmetric $(1,1)$-tensor fields on $M$, $i=1,2$, such that $J_1^2=-(\lambda_1+1)I$, $J_2^2=-(\lambda_2-1)I$, for $\lambda_1,\lambda_2\in \mathbb R$, and $J_1J_2+J_2J_1=-(\lambda_1+\lambda_2)I$, and let
$\hat{J}_i$ be given by (\ref{om}). Then $(\hat{J}_1,\hat{J}_2,\hat{J}_1\hat{J}_2)$ is a generalized almost para-quaternionic structure on $M$.
\end{proposition}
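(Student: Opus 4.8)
The plan is to check the three defining conditions of a generalized almost para-quaternionic structure directly, reading off the squares and the product of the $\hat J_i$ from the block computation of $\hat J_i^2$ recorded in remarks (i)--(iii) immediately preceding the statement.

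First I would use remark (i): since by hypothesis $J_1^2=-(\lambda_1+1)I$, it follows that $\hat J_1^2=-I$, so $\hat J_1$ is a generalized almost complex structure. Next, by remark (ii), the hypothesis $J_2^2=-(\lambda_2-1)I$ gives $\hat J_2^2=I$, so $\hat J_2$ is a generalized almost product structure. Finally, by remark (iii), the hypothesis $J_1J_2+J_2J_1=-(\lambda_1+\lambda_2)I$ yields the anticommutation relation $\hat J_1\hat J_2=-\hat J_2\hat J_1$.

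It then remains to set $\hat J_3:=\hat J_1\hat J_2$ and to verify that $\hat J_3$ is a generalized almost product structure. Using the anticommutation relation together with $\hat J_1^2=-I$ and $\hat J_2^2=I$, I would compute
\[
\hat J_3^2=\hat J_1\hat J_2\hat J_1\hat J_2=-\hat J_1^2\hat J_2^2=-(-I)(I)=I.
\]
Thus $\hat J_1$ is a generalized almost complex structure, while $\hat J_2$ and $\hat J_3=\hat J_1\hat J_2$ are generalized almost product structures satisfying $\hat J_1\hat J_2=-\hat J_2\hat J_1$, which is precisely the content of the definition of a generalized almost para-quaternionic structure.

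No step presents a genuine obstacle: all the algebraic content is already packaged into remarks (i)--(iii), which encode the block-matrix identities for $\hat J_i^2$ and $\hat J_1\hat J_2+\hat J_2\hat J_1$ arising from the relations $hJ_i=J_i^*h$ and $J_ih^{-1}=h^{-1}J_i^*$. The only computation not literally contained in those remarks is $\hat J_3^2=I$, but this is the one-line para-analogue of the corresponding step in the preceding quaternionic proposition, obtained at once from the anticommutation and the known squares of $\hat J_1$ and $\hat J_2$.
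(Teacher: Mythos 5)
Your proof is correct and takes essentially the same route as the paper, whose entire proof reads ``We have $\hat J_1^2=-I$, $\hat J_2^2=I$, $\hat{J}_1\hat{J}_2=-\hat{J}_2\hat{J}_1$, hence the conclusion,'' relying exactly on the remarks (i)--(iii) you cite. The only difference is that you make explicit the final step $(\hat J_1\hat J_2)^2=-\hat J_1^2\hat J_2^2=I$, which the paper leaves implicit; this is a harmless (indeed welcome) elaboration, not a deviation.
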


\begin{proof}
We have $\hat J_1^2=-I$, $\hat J_2^2=I$, $\hat{J}_1\hat{J}_2=-\hat{J}_2\hat{J}_1$,
hence the conclusion.
\end{proof}

\begin{proposition}
Let $h$ be a non-degenerate symmetric $(0,2)$-tensor field, let $J_i$ be two $h$-symmetric $(1,1)$-tensor fields on $M$, $i=1,2$, such that $J_i^2=-(\lambda_i-1)I$, for $\lambda_i\in \mathbb R$, and $J_1J_2+J_2J_1=-(\lambda_1+\lambda_2)I$, and let
$\hat{J}_i$ be given by (\ref{om}). Then $(\hat{J}_1\hat{J}_2,\hat{J}_1,\hat{J}_2)$ is a generalized almost para-quaternionic structure on $M$.
\end{proposition}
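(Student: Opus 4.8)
The plan is to mimic the two preceding proofs, reducing the assertion to a verification of the three relevant squares together with the anticommutation relation; everything needed is already packaged in observations (ii) and (iii) following (\ref{om}) and in Proposition \ref{sdk}. The only structural difference from the quaternionic and the first para-quaternionic case is that here the generalized almost complex structure occupies the first slot and is realized as the \emph{product} $\hat{J}_1\hat{J}_2$, while $\hat{J}_1$ and $\hat{J}_2$ themselves furnish the two generalized almost product structures.

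First I would record the squares of $\hat{J}_1$ and $\hat{J}_2$. From the block form (\ref{om}) one has
\[
\hat{J}_i^2=\begin{pmatrix} J_i^2 & 0 \\ 0 & (J_i^*)^2 \end{pmatrix}+\lambda_i I ,
\]
and since $(J_i^*)^2=(J_i^2)^*=-(\lambda_i-1)I$, the hypothesis $J_i^2=-(\lambda_i-1)I$ yields $\hat{J}_i^2=I$ for $i=1,2$; this is exactly observation (ii) after (\ref{om}), so both $\hat{J}_1$ and $\hat{J}_2$ are generalized almost product structures. Next I would invoke the hypothesis $J_1J_2+J_2J_1=-(\lambda_1+\lambda_2)I$, which by observation (iii) after (\ref{om}) is equivalent to the anticommutation $\hat{J}_1\hat{J}_2=-\hat{J}_2\hat{J}_1$; expanding the blocks of $\hat{J}_1\hat{J}_2+\hat{J}_2\hat{J}_1$ this uses only $hJ_i=J_i^*h$ and $J_ih^{-1}=h^{-1}J_i^*$, precisely as in the proof of Proposition \ref{sdk}.

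With these two facts in hand the remainder is purely formal operator algebra. From $\hat{J}_1^2=\hat{J}_2^2=I$ and $\hat{J}_2\hat{J}_1=-\hat{J}_1\hat{J}_2$ I would compute
\[
(\hat{J}_1\hat{J}_2)^2=\hat{J}_1(\hat{J}_2\hat{J}_1)\hat{J}_2=-\hat{J}_1^2\hat{J}_2^2=-I ,
\]
so $\hat{J}_1\hat{J}_2$ is a generalized almost complex structure; the same two facts show that the three operators $\hat{J}_1\hat{J}_2$, $\hat{J}_1$, $\hat{J}_2$ pairwise anticommute. Hence $(\hat{J}_1\hat{J}_2,\hat{J}_1,\hat{J}_2)$ is a generalized almost para-quaternionic structure. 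I do not anticipate any real obstacle: the content has been fully prepared by Proposition \ref{sdk} and the observations after (\ref{om}), so the argument is as short as those of the two preceding propositions. The one point deserving attention is the careful bookkeeping of the intertwining relations $hJ_i=J_i^*h$ and $J_ih^{-1}=h^{-1}J_i^*$ when expanding the matrix products, but these are exactly the identities already isolated in Proposition \ref{sdk}, so no computation beyond it is required.
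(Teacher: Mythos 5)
Your proposal is correct and follows essentially the same route as the paper: the paper's proof consists precisely of the observations $\hat J_1^2=I$, $\hat J_2^2=I$ and $\hat J_1\hat J_2=-\hat J_2\hat J_1$, read off from remarks (ii) and (iii) after (\ref{om}), followed by the same purely algebraic conclusion. If anything, you spell out slightly more than the paper does, e.g.\ the computation $(\hat J_1\hat J_2)^2=-\hat J_1^2\hat J_2^2=-I$, which the paper leaves implicit.
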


\begin{proof}
We have $\hat J_1^2=I$, $\hat J_2^2=I$, $\hat{J}_1\hat{J}_2=-\hat{J}_2\hat{J}_1$,
hence the conclusion.
\end{proof}
\pagebreak

Denote by
\begin{equation} \label{kl}
\hat J=:\hat{J}_1\hat{J}_2=\left(
                    \begin{array}{cc}
                      J_1J_2+\lambda_1I & (\lambda_2J_1-\lambda_1J_2)h^{-1} \\
                      hJ_2-J_1^*h & J_1^*J_2^*+\lambda_2I \\
                    \end{array}
                  \right).
\end{equation}
We can state the following results.

\begin{proposition} \label{kj}
Let $\nabla$ be an affine connection, let $h$ be a non-degenerate symmetric $(0,2)$-tensor field, let $J_1$ and $J_2$ be two $h$-symmetric $(1,1)$-tensor fields on $M$, and let $\hat J$ be given by (\ref{kl}).
Then the following assertions are equivalent:

(i) $\hat \nabla \hat J=0$;

(ii) $
\left\{
  \begin{array}{ll}
   \lambda_1=\lambda_2 \\
    \nabla J_1=\nabla J_2 \\
    (\nabla J_i)\circ J_j +J_i\circ (\nabla J_j)=0, \ i,j=1,2, i\neq j
  \end{array}
\right.;
$

(iii) $\hat \nabla^* \hat J=0$.

In particular, if $\nabla J_1=0$ and $\nabla J_2=0$, then $\hat \nabla \hat J=0$ and $\hat \nabla^* \hat J=0$.
\end{proposition}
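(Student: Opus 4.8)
The plan is to follow the template of Proposition \ref{mm3}: compute the tensor $(\hat\nabla_{X+\eta}\hat J)(Y+\beta)$ explicitly for arbitrary $X,Y\in\Gamma^\infty(TM)$ and $\eta,\beta\in\Gamma^\infty(T^*M)$, using the block form (\ref{kl}) together with $\hat\nabla_{X+\eta}(Y+\beta)=\nabla_X Y+h(\nabla_X(h^{-1}(\beta)))$, read off the vanishing conditions, and then repeat the whole computation for $\hat\nabla^*$.

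First I would apply $\hat J$ to $Y+\beta$, obtaining a $TM$-part $A=(J_1J_2+\lambda_1 I)Y+(\lambda_2 J_1-\lambda_1 J_2)h^{-1}(\beta)$ and a $T^*M$-part $\omega=(hJ_2-J_1^*h)Y+(J_1^*J_2^*+\lambda_2 I)\beta$, then apply $\hat\nabla_{X+\eta}$ to $A+\omega$ and subtract $\hat J$ applied to $\hat\nabla_{X+\eta}(Y+\beta)$. The key technical inputs are the $h$-symmetry identities $hJ_i=J_i^*h$, $h^{-1}J_i^*=J_ih^{-1}$, $(J_1J_2)^*=J_2^*J_1^*$, together with the differentiated version $\nabla_X J_i^*=(\nabla_X J_i)^*$, which follows from $h$-symmetry by the one-line argument on $((\nabla_X J_i^*)\beta)(Y)=\beta((\nabla_X J_i)Y)$ used in the last line of the proof of Proposition \ref{mm3}. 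The whole point of the computation is that every term carrying a derivative of the argument, namely the terms in $\nabla_X Y$ and in $\nabla_X(h^{-1}(\beta))$, cancels between the two pieces, so that $(\hat\nabla_{X+\eta}\hat J)(Y+\beta)$ is tensorial in $Y+\beta$ and involves only $\nabla J_1$ and $\nabla J_2$.

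Collecting the surviving terms, the $TM$-component becomes $[(\nabla_X J_1)J_2+J_1(\nabla_X J_2)]Y+[\lambda_2\nabla_X J_1-\lambda_1\nabla_X J_2]h^{-1}(\beta)$ and the $T^*M$-component becomes $h\{(\nabla_X J_2-\nabla_X J_1)Y+[(\nabla_X J_1)J_2+J_1(\nabla_X J_2)]h^{-1}(\beta)\}$. Since $Y$ and $\beta$ are arbitrary and $h$ is non-degenerate, $\hat\nabla\hat J=0$ is equivalent to the three equations $\nabla J_1=\nabla J_2$, $\lambda_2\nabla J_1=\lambda_1\nabla J_2$ and $(\nabla J_1)J_2+J_1(\nabla J_2)=0$; substituting the first into the second gives $(\lambda_1-\lambda_2)\nabla J_1=0$, which in the presence of a non-trivial $\nabla J_i$ is exactly $\lambda_1=\lambda_2$, and this is the packaging displayed in (ii). This yields (i) $\Leftrightarrow$ (ii).

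For (iii) I would run the identical scheme with $\hat\nabla^*_{X+\eta}(Z+\gamma)=h^{-1}(\nabla_X(h(Z)))+\nabla_X\gamma$ in place of $\hat\nabla$. Here the roles of $h$ and $h^{-1}$ are interchanged, so the surviving coefficients come out as the $h$-adjoints of the previous ones, namely $\nabla_X J_1^*=\nabla_X J_2^*$, the $\lambda$-relation, and $(\nabla_X J_1^*)J_2^*+J_1^*(\nabla_X J_2^*)=0$; by $\nabla_X J_i^*=(\nabla_X J_i)^*$ and non-degeneracy of $h$ these translate back into conditions on $\nabla J_1,\nabla J_2$. The main obstacle, and the step deserving the most care, is precisely this reconciliation: the coefficient produced by $\hat\nabla^*\hat J=0$ is the reversed relation $(\nabla J_2)J_1+J_2(\nabla J_1)=0$ rather than the relation $(\nabla J_1)J_2+J_1(\nabla J_2)=0$ obtained from $\hat\nabla\hat J=0$, so one must track the adjunction identities carefully and observe that condition (ii), which imposes $(\nabla J_i)J_j+J_i(\nabla J_j)=0$ for \emph{both} ordered pairs $i\neq j$, is exactly what bridges the two computations, giving (i) $\Leftrightarrow$ (ii) $\Leftrightarrow$ (iii). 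Finally, the particular case is immediate: if $\nabla J_1=\nabla J_2=0$ then every surviving coefficient in both computations vanishes identically, so $\hat\nabla\hat J=0$ and $\hat\nabla^*\hat J=0$ with no further hypothesis on $\lambda_1,\lambda_2$.
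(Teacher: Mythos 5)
Your overall strategy is exactly the paper's: compute $(\hat\nabla_{X+\eta}\hat J)(Y+\beta)$ blockwise, check that all terms in $\nabla_XY$ and $\nabla_X(h^{-1}(\beta))$ cancel, read off tensorial coefficients, and repeat for $\hat\nabla^*$ using $\nabla_XJ_i^*=(\nabla_XJ_i)^*$. However, there is a genuine computational error in your collected terms that then corrupts the logical structure of the equivalences. In the $T^*M$-component you wrote the $\beta$-coefficient as $(\nabla_XJ_1)\circ J_2+J_1\circ(\nabla_XJ_2)$, the same ordering as in the $TM$-component. That is wrong: the lower-right block of $\hat J$ is $J_1^*J_2^*+\lambda_2I$, and transporting it through $h$ \emph{reverses} the product, since $hJ_i=J_i^*h$ gives $h^{-1}J_1^*J_2^*=J_2J_1h^{-1}$. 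Hence the surviving $T^*M$-coefficient of $\beta$ is $\nabla_X(J_2J_1)=(\nabla_XJ_2)\circ J_1+J_2\circ(\nabla_XJ_1)$ --- the \emph{reversed} ordered relation. This matters because with $\nabla J_1=\nabla J_2=:A$ the two relations $A_XJ_2+J_1A_X=0$ and $A_XJ_1+J_2A_X=0$ are independent in general ($A_X$ need not be $h$-symmetric when $\nabla h\neq0$), so your version of (i) is strictly weaker than (ii), which demands both ordered pairs, and your claimed implication (i) $\Rightarrow$ (ii) does not go through.

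The same slip leads you to misdiagnose the $\hat\nabla^*$ computation. You say $\hat\nabla^*\hat J=0$ produces only the reversed relation and that condition (ii) is needed to ``bridge'' the two computations; but in fact no bridge is needed, and if your accounting were correct the three-way equivalence would simply be false, since (i) and (iii) would then be genuinely different conditions. What actually happens (and what the paper's computation shows) is that \emph{each} of $\hat\nabla\hat J=0$ and $\hat\nabla^*\hat J=0$ separately yields \emph{both} ordered relations: the $TM$-output block carries one ordering and the $T^*M$-output block the other, precisely because conjugation by $h$ turns $J_1J_2$ into $J_2J_1$. Each of (i) and (iii) is thus individually equivalent to the full four-equation system $\{\nabla J_1=\nabla J_2,\ \lambda_2\nabla J_1=\lambda_1\nabla J_2,\ (\nabla J_1)\circ J_2+J_1\circ(\nabla J_2)=0,\ (\nabla J_2)\circ J_1+J_2\circ(\nabla J_1)=0\}$, and the equivalence with (ii) follows at once. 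One point in your favour: your observation that $(\lambda_1-\lambda_2)\nabla J_1=0$ forces $\lambda_1=\lambda_2$ only when $\nabla J_1\neq0$ is correct and is an imprecision you share with the paper itself (its ``in particular'' clause, with $\nabla J_i=0$ and no constraint on the $\lambda_i$, witnesses the same caveat); your handling of the final particular case is fine.
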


\begin{proof}
For any $X,Y\in {\Gamma}^{\infty}(TM)$ and $\eta,\beta \in {\Gamma}^{\infty}(T^*M)$, we have
$$(\hat{\nabla}_{X+\eta}\hat{J})(Y+\beta):=\hat{\nabla}_{X+\eta}(\hat{J}(Y+\beta))-
\hat{J}(\hat{\nabla}_{X+\eta}(Y+\beta))=$$
$$=\hat{\nabla}_{X+\eta}\Big((J_1J_2+\lambda_1I)(Y)+(\lambda_2J_1-\lambda_1J_2)(h^{-1}(\beta))+h((J_2-J_1)(Y))+(J_1^*J_2^*+\lambda_2I)(\beta)\Big)-
$$$$-
\hat J\Big(\nabla_XY+h(\nabla_X(h^{-1}(\beta)))\Big)=$$
$$=\nabla_X(J_1J_2Y)+\lambda_1\nabla_XY+\lambda_2\nabla_X(J_1(h^{-1}(\beta)))-\lambda_1\nabla_X(J_2(h^{-1}(\beta)))+
$$$$+h\big(\nabla_X(J_2Y-J_1Y+
h^{-1}(J_1^*J_2^*\beta)+\lambda_2h^{-1}(\beta))\big)-$$
$$-\Big((J_1J_2+\lambda_1I)(\nabla_XY)+(\lambda_2J_1-\lambda_1J_2)(\nabla_X(h^{-1}(\beta)))+
$$$$+h((J_2-J_1)(\nabla_XY))+(J_1^*J_2^*+\lambda_2I)(h(\nabla_X(h^{-1}(\beta)))\Big)=$$
$$=(\nabla_X(J_1J_2))Y+\lambda_2(\nabla_XJ_1)(h^{-1}(\beta))-\lambda_1(\nabla_XJ_2)(h^{-1}(\beta))+
$$$$+h\big((\nabla_XJ_2)Y-(\nabla_XJ_1)Y\big)+h(\nabla_X(h^{-1}(J_1^*J_2^*\beta)))-J_1^*J_2^*h(\nabla_X(h^{-1}(\beta))).$$

Taking into account that
\[
J_1^*J_2^*h=hJ_2J_1
\]
(and, consequently, $h^{-1}J_1^*J_2^*=J_2J_1h^{-1}$), we get
\[
h(\nabla_X(h^{-1}(J_1^*J_2^*\beta)))-J_1^*J_2^*h(\nabla_X(h^{-1}(\beta)))=$$
$$=h(\nabla_X((J_2J_1)(h^{-1}(\beta))))-h((J_2J_1)(\nabla_X(h^{-1}(\beta))))
=h\big((\nabla_X(J_2J_1))(h^{-1}(\beta))\big),
\]
\pagebreak
hence
\[
(\hat{\nabla}_{X+\eta}\hat{J})(Y+\beta)=(\nabla_X(J_1J_2))Y+\lambda_2(\nabla_XJ_1)(h^{-1}(\beta))-\lambda_1(\nabla_XJ_2)(h^{-1}(\beta))+
\]
\[
+h\big((\nabla_XJ_2)Y-(\nabla_XJ_1)Y+(\nabla_X(J_2J_1))(h^{-1}(\beta))\big).
\]

Also:
\[
(\nabla_X(J_1J_2))Y=(\nabla_XJ_1)(J_2Y)+J_1((\nabla_XJ_2)Y)
\]
and we obtain
\[
(\hat{\nabla}_{X+\eta}\hat{J})(Y+\beta)=(\nabla_XJ_1)(J_2Y)+J_1((\nabla_XJ_2)Y)+
\]
\[
+\lambda_2(\nabla_XJ_1)(h^{-1}(\beta))-\lambda_1(\nabla_XJ_2)(h^{-1}(\beta))+
\]
\[
+h\big((\nabla_XJ_2)Y-(\nabla_XJ_1)Y+(\nabla_XJ_2)(J_1(h^{-1}(\beta)))+J_2((\nabla_XJ_1)(h^{-1}(\beta)))\big).
\]

Therefore, $\hat \nabla \hat J=0$ if and only if
\[
\left\{
  \begin{array}{ll}
    (\nabla_XJ_1)(J_2Y)+J_1((\nabla_XJ_2)Y)+\lambda_2(\nabla_XJ_1)(h^{-1}(\beta))-\lambda_1(\nabla_XJ_2)(h^{-1}(\beta))=0 \\
    (\nabla_XJ_2)Y-(\nabla_XJ_1)Y+(\nabla_XJ_2)(J_1(h^{-1}(\beta)))+J_2((\nabla_XJ_1)(h^{-1}(\beta)))=0
  \end{array}
\right.,
\]
for any $X,Y\in \Gamma^{\infty}(TM)$ and $\beta \in \Gamma^{\infty}(T^*M)$. Taking, consequently, $\beta=0$ and $Y=0$, we conclude that
$\hat \nabla \hat J=0$ if and only if
\[
\left\{
  \begin{array}{ll}
    (\nabla_XJ_1)(J_2Y)+J_1((\nabla_XJ_2)Y)=0 \\
    (\nabla_XJ_2)Y-(\nabla_XJ_1)Y=0\\
  \lambda_2(\nabla_XJ_1)(h^{-1}(\beta))-\lambda_1(\nabla_XJ_2)(h^{-1}(\beta))=0\\
  (\nabla_XJ_2)(J_1(h^{-1}(\beta)))+J_2((\nabla_XJ_1)(h^{-1}(\beta)))=0
\end{array}
\right.,
\]
for any $X,Y\in \Gamma^{\infty}(TM)$ and $\beta\in \Gamma^{\infty}(T^*M)$, i.e.,
\pagebreak
\[
\left\{
  \begin{array}{ll}
    (\nabla J_1)\circ J_2+J_1\circ (\nabla J_2)=0 \\
    \nabla J_2=\nabla J_1\\
\lambda_2\nabla J_1=\lambda_1\nabla J_2\\
(\nabla J_2)\circ J_1+J_2\circ (\nabla J_1)=0
  \end{array}
\right.
\]
and we get (i) $\Longleftrightarrow$ (ii).

Now
$$(\hat{\nabla}^*_{X+\eta}\hat{J})(Y+\beta):=\hat{\nabla}^*_{X+\eta}(\hat{J}(Y+\beta))-\hat{J}(\hat{\nabla}^*_{X+\eta}(Y+\beta))=$$
$$=\hat{\nabla}^*_{X+\eta}\Big((J_1J_2+\lambda_1I)(Y)+(\lambda_2J_1-\lambda_1J_2)(h^{-1}(\beta))+h((J_2-J_1)(Y))+(J_1^*J_2^*+\lambda_2I)(\beta)\Big)-
$$$$-\hat J\Big(h^{-1}(\nabla_X(h(Y)))+\nabla_X\beta\Big)=$$
$$=h^{-1}(\nabla_X(h(J_1J_2Y)))+\lambda_1h^{-1}(\nabla_X(h(Y)))+$$$$+\lambda_2h^{-1}(\nabla_X(h(J_1(h^{-1}(\beta)))))-\lambda_1h^{-1}(\nabla_X(h(J_2(h^{-1}(\beta)))))   +$$$$+\nabla_X\big(h(J_2Y)-h(J_1Y)+J_1^*J_2^*\beta+\lambda_2\beta\big)-$$$$-\Big(J_1J_2(h^{-1}(\nabla_X(h(Y))))+\lambda_1h^{-1}(\nabla_X(h(Y)))
+\lambda_2J_1(h^{-1}(\nabla_X\beta))-\lambda_1J_2(h^{-1}(\nabla_X\beta))+
$$$$+h(J_2(h^{-1}(\nabla_X(h(Y)))))-h(J_1(h^{-1}(\nabla_X(h(Y)))))+J_1^*J_2^*(\nabla_X\beta)+\lambda_2\nabla_X\beta\Big).$$

Taking into account that
\[
J_1^*J_2^*h=hJ_2J_1, \ \ J_i^*h=hJ_i, \ i=1,2
\]
(and, consequently, $h^{-1}J_1^*J_2^*=J_2J_1h^{-1}$ and $h^{-1}J_i^*=J_ih^{-1}$, $i=1,2$), we get
\[
(\hat{\nabla}^*_{X+\eta}\hat{J})(Y+\beta)=h^{-1}\big((\nabla_X(J_2^*J_1^*))(h(Y))+\lambda_2(\nabla_XJ_1^*)\beta-\lambda_1(\nabla_XJ_2^*)\beta\big)+
\]
\[
+(\nabla_XJ_2^*)(h(Y))-(\nabla_XJ_1^*)(h(Y))+(\nabla_X(J_1^*J_2^*))\beta.
\]

Also:
\[
(\nabla_X(J_1^*J_2^*))\beta=(\nabla_XJ_1^*)(J_2^*\beta)+J_1^*((\nabla_XJ_2^*)\beta)
\]
and we obtain
\[
(\hat{\nabla}^*_{X+\eta}\hat{J})(Y+\beta)=h^{-1}\big((\nabla_XJ_2^*)(J_1^*(h(Y)))+J_2^*((\nabla_XJ_1^*)(h(Y)))+\lambda_2(\nabla_XJ_1^*)\beta-\lambda_1(\nabla_XJ_2^*)\beta\big)+
\]
\[
+(\nabla_XJ_2^*)(h(Y))-(\nabla_XJ_1^*)(h(Y))+(\nabla_XJ_1^*)(J_2^*\beta)+J_1^*((\nabla_XJ_2^*)\beta).
\]

Therefore, $\hat \nabla^* \hat J=0$ if and only if
\[
\left\{
  \begin{array}{ll}
    (\nabla_XJ_2^*)(J_1^*(h(Y)))+J_2^*((\nabla_XJ_1^*)(h(Y)))+\lambda_2(\nabla_XJ_1^*)\beta-\lambda_1(\nabla_XJ_2^*)\beta=0 \\
    (\nabla_XJ_2^*)(h(Y))-(\nabla_XJ_1^*)(h(Y))+(\nabla_XJ_1^*)(J_2^*\beta)+J_1^*((\nabla_XJ_2^*)\beta)=0
  \end{array}
\right.,
\]
for any $X,Y\in \Gamma^{\infty}(TM)$ and $\beta \in \Gamma^{\infty}(T^*M)$. Taking, consequently, $\beta=0$ and $Y=0$, we conclude that
$\hat \nabla^* \hat J=0$ if and only if
\[
\left\{
  \begin{array}{ll}
    (\nabla_XJ_2^*)(J_1^*(h(Y)))+J_2^*((\nabla_XJ_1^*)(h(Y)))=0 \\
    (\nabla_XJ_2^*)(h(Y))-(\nabla_XJ_1^*)(h(Y))=0\\
\lambda_2(\nabla_XJ_1^*)\beta-\lambda_1(\nabla_XJ_2^*)\beta=0\\
(\nabla_XJ_1^*)(J_2^*\beta)+J_1^*((\nabla_XJ_2^*)\beta)=0
  \end{array}
\right.,
\]
for any $X,Y\in \Gamma^{\infty}(TM)$ and $\beta\in \Gamma^{\infty}(T^*M)$, i.e.,
\[
\left\{
  \begin{array}{ll}
    (\nabla J_2^*)\circ J_1^*+J_2^*\circ (\nabla J_1^*)=0 \\
    \nabla J_2^*=\nabla J_1^*\\
\lambda_2\nabla J_1^*=\lambda_1\nabla J_2^*\\
(\nabla J_1^*)\circ J_2^*+J_1^*\circ (\nabla J_2^*)=0
  \end{array}
\right.
\]
and (ii) $\Longleftrightarrow$ (iii) follows since $((\nabla_X J_i^*)\beta)(Y)=\beta((\nabla_XJ_i)Y)$, for any $X,Y\in \Gamma^{\infty}(TM)$ and $\beta \in \Gamma^{\infty}(T^*M)$, $i=1,2$.
\end{proof}

\begin{corollary} \label{c}
Let $\nabla$ be an affine connection, let $h$ be a non-degenerate symmetric $(0,2)$-tensor field, let $J_1$ and $J_2$ be two $h$-symmetric $(1,1)$-tensor fields on $M$, and let $\hat J$ be given by (\ref{kl}).
Then $\hat{\nabla}^{(\alpha)}\hat{J}=0$, for any $\alpha\in \mathbb{R}$, if and only if
\[
\left\{
  \begin{array}{ll}
   \lambda_1=\lambda_2 \\
    \nabla J_1=\nabla J_2 \\
    (\nabla J_i)\circ J_j +J_i\circ (\nabla J_j)=0, \ i,j=1,2, i\neq j
  \end{array}
\right..
\]

In particular, if $\nabla J_1=0$ and $\nabla J_2=0$, then $\hat{\nabla}^{(\alpha)}\hat{J}=0$, for any $\alpha\in \mathbb{R}$.
\end{corollary}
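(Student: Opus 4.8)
The plan is to reduce the statement entirely to Proposition~\ref{kj}, exploiting the fact that $\hat{\nabla}^{(\alpha)}$ is an affine combination of $\hat{\nabla}$ and $\hat{\nabla}^*$. First I would record the pointwise identity
\[
(\hat{\nabla}^{(\alpha)}_{X+\eta}\hat{J})(Y+\beta)=\frac{1+\alpha}{2}(\hat{\nabla}_{X+\eta}\hat{J})(Y+\beta)+\frac{1-\alpha}{2}(\hat{\nabla}^*_{X+\eta}\hat{J})(Y+\beta),
\]
valid for all $X,Y\in\Gamma^{\infty}(TM)$ and $\eta,\beta\in\Gamma^{\infty}(T^*M)$. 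This is immediate from the definition $\hat{\nabla}^{(\alpha)}=\frac{1+\alpha}{2}\hat{\nabla}+\frac{1-\alpha}{2}\hat{\nabla}^*$: writing $(\hat{\nabla}_\sigma\hat{J})\tau=\hat{\nabla}_\sigma(\hat{J}\tau)-\hat{J}(\hat{\nabla}_\sigma\tau)$, the twisted derivative $\hat{\nabla}\mapsto(\hat{\nabla}_\sigma\hat{J})\tau$ is affine in the connection (two connections differ by a $(1,2)$-tensor, and the difference of their twisted derivatives is the corresponding commutator term, hence linear in that tensor), while the weights $\frac{1+\alpha}{2}$ and $\frac{1-\alpha}{2}$ sum to $1$, so $\hat{\nabla}^{(\alpha)}$ is again a connection and the affine combination passes through the twisted derivative. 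This is exactly the mechanism already used in the corollary following Proposition~\ref{mm3}.

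For the reverse implication, suppose the system in part (ii) of Proposition~\ref{kj} holds. That proposition then gives simultaneously $\hat{\nabla}\hat{J}=0$ (by the equivalence (i)$\Leftrightarrow$(ii)) and $\hat{\nabla}^*\hat{J}=0$ (by (ii)$\Leftrightarrow$(iii)); substituting both into the displayed identity makes the right-hand side vanish, whence $\hat{\nabla}^{(\alpha)}\hat{J}=0$ for every $\alpha\in\mathbb{R}$. For the forward implication I would simply specialize the hypothesis: taking $\alpha=1$, where $\hat{\nabla}^{(1)}=\hat{\nabla}$, yields $\hat{\nabla}\hat{J}=0$, and the equivalence (i)$\Leftrightarrow$(ii) of Proposition~\ref{kj} returns the system. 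A single value of $\alpha$ suffices precisely because (i) and (iii) are already known to coincide with (ii); alternatively, evaluating at $\alpha=1$ and $\alpha=-1$ recovers $\hat{\nabla}\hat{J}=0$ and $\hat{\nabla}^*\hat{J}=0$ separately. The final ``in particular'' clause is immediate, since $\nabla J_1=\nabla J_2=0$ trivially satisfies all three conditions of (ii).

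There is essentially no obstacle here: the corollary is a formal consequence of Proposition~\ref{kj}, and no computation beyond the one already carried out there is required. The only point deserving a line of care is the affineness in the connection of $\tau\mapsto(\hat{\nabla}_\sigma\hat{J})\tau$, which guarantees that the affine combination defining $\hat{\nabla}^{(\alpha)}$ distributes across the $\hat{J}$-twisted derivative; everything else is a direct appeal to the equivalences (i)$\Leftrightarrow$(ii)$\Leftrightarrow$(iii) established in Proposition~\ref{kj}.
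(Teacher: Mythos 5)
Your proof is correct and takes essentially the same route as the paper's: the paper likewise records the identity $\hat{\nabla}^{(\alpha)}\hat{J}=\frac{1+\alpha}{2}\hat{\nabla}\hat{J}+\frac{1-\alpha}{2}\hat{\nabla}^*\hat{J}$ and then invokes the equivalences (i)$\Leftrightarrow$(ii)$\Leftrightarrow$(iii) of Proposition~\ref{kj}. You merely make explicit the details the paper leaves implicit, namely the affineness of the $\hat{J}$-twisted derivative in the connection and the specialization to $\alpha=1$ (or $\alpha=\pm 1$) for the forward implication, both of which are sound.
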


\begin{proof}
Since we have
$$\hat{\nabla}^{(\alpha)}\hat{J}=\frac{1+\alpha}{2}\hat{\nabla}\hat{J}+
\frac{1-\alpha}{2}\hat{\nabla}^*\hat{J},$$
we get the conclusion from Proposition \ref{kj}.
\end{proof}

Moreover, for $H_1=H_2=0$, we consider
\begin{equation} \label{on}
\hat J_i:=\left(
                    \begin{array}{cc}
                      J_i & 0 \\
                      h & -J_i^* \\
                    \end{array}
                  \right), \ i=1,2, \ \ \hat J:=\left(
                    \begin{array}{cc}
                      J_1J_2 & 0 \\
                      h(J_2-J_1) & J_1^*J_2^* \\
                    \end{array}
                  \right)
\end{equation}
and we can state the following.

\begin{proposition} \label{k}
Let $\nabla$ be an affine connection, let $h$ be a non-degenerate symmetric $(0,2)$-tensor field, let $J_1$ and $J_2$ be two $h$-symmetric $(1,1)$-tensor fields on $M$, and let
$\hat J$ be given by (\ref{on}).

(i) If $J_1J_2=J_2J_1$, then $\hat J^2=\left(
                    \begin{array}{cc}
                      J_1^2J_2^2 & 0 \\
                      0 & (J_1^*)^2(J_2^*)^2 \\
                    \end{array}
                  \right)$.

(ii) The following assertions are equivalent:

\hspace*{0.5cm} (ii$_1$) $\hat \nabla \hat J=0$;

\hspace*{0.5cm} (ii$_2$) $\left\{
  \begin{array}{ll}
    \nabla J_1=\nabla J_2 \\
    (\nabla J_i)\circ J_j +J_i\circ (\nabla J_j)=0, \ i,j=1,2, i\neq j
  \end{array}
\right.;
$

\hspace*{0.5cm} (ii$_3$) $\hat \nabla^* \hat J=0$.

In particular, if $\nabla J_1=0$ and $\nabla J_2=0$, then $\hat \nabla \hat J=0$ and $\hat \nabla^* \hat J=0$.
\end{proposition}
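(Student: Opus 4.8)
The statement splits naturally into the purely algebraic identity (i) and the covariant-constancy equivalence (ii), and I would handle them by quite different means.

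For (ii), the plan is to avoid any fresh computation by recognizing that the operator $\hat J$ in (\ref{on}) is precisely the $\lambda_1=\lambda_2=0$ specialization of the operator $\hat J$ in (\ref{kl}). Indeed, setting $\lambda_1=\lambda_2=0$ kills the top-right block $(\lambda_2 J_1-\lambda_1 J_2)h^{-1}$, removes the summands $\lambda_1 I$ and $\lambda_2 I$ from the two diagonal blocks, and turns the bottom-left block $hJ_2-J_1^*h$ into $h(J_2-J_1)$ via the symmetry relation $J_1^*h=hJ_1$. Hence I would simply invoke Proposition \ref{kj}: in its condition (ii) the first line reads $\lambda_1=\lambda_2$, which under our specialization is the vacuous identity $0=0$, so that condition collapses to exactly the system (ii$_2$) of the present statement. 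The equivalence chain (i)$\Leftrightarrow$(ii)$\Leftrightarrow$(iii) of Proposition \ref{kj} then transcribes verbatim to (ii$_1$)$\Leftrightarrow$(ii$_2$)$\Leftrightarrow$(ii$_3$), and the ``in particular'' clause is immediate since $\nabla J_1=\nabla J_2=0$ trivially satisfies both equations of (ii$_2$).

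For (i), the plan is a direct block multiplication of $\hat J$ with itself, using the symmetry relations $hJ_i=J_i^*h$ and $J_ih^{-1}=h^{-1}J_i^*$ together with their products (notably $J_1^*J_2^*h=hJ_1J_2$, and $J_1^*J_2^*=J_2^*J_1^*$ obtained by taking the $h$-transpose of $J_1J_2=J_2J_1$). The two diagonal blocks are $(J_1J_2)^2$ and $(J_1^*J_2^*)^2$, and commutativity immediately rewrites them as $J_1^2J_2^2$ and $(J_1^*)^2(J_2^*)^2$, matching the asserted form. The step I expect to be the main obstacle is the bottom-left block, which equals $h(J_2-J_1)J_1J_2+J_1^*J_2^*h(J_2-J_1)$. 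I would first transport the second summand onto $TM$ through $h$ (using $J_1^*J_2^*h=hJ_1J_2$), reducing the block to $h\big[(J_2-J_1)J_1J_2+J_1J_2(J_2-J_1)\big]$, and then expand the bracket and attempt to collapse it using $J_1J_2=J_2J_1$. This is the delicate point of the whole statement: I would verify with care whether commutativity alone forces this bracket to vanish, or whether an additional relation between $J_1$ and $J_2$ (beyond $J_1J_2=J_2J_1$) must be invoked; only once this block is shown to vanish does the block-diagonal form claimed in (i) follow.
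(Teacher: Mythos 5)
Your reduction of (ii) to Proposition \ref{kj} via the specialization $\lambda_1=\lambda_2=0$ is correct and is exactly the paper's route (its entire proof of (ii) is the sentence ``(ii) follows from Proposition \ref{kj}''). Your check that the blocks of (\ref{kl}) degenerate to those of (\ref{on}), using $J_1^*h=hJ_1$ for the bottom-left block, and your observation that the line $\lambda_1=\lambda_2$ becomes the vacuous identity $0=0$, so that the system of \ref{kj}(ii) collapses to (ii$_2$), is precisely the justification needed; the ``in particular'' clause is then immediate.

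On (i), however, the hesitation you flag is not something more care can resolve: commutativity alone does \emph{not} make the bottom-left block vanish, so the step you left open would fail. With $J_1J_2=J_2J_1$, the bracket you isolate equals
\[
(J_2-J_1)J_1J_2+J_1J_2(J_2-J_1)=2J_1J_2(J_2-J_1),
\]
so that in fact
\[
\hat J^2=\left(
\begin{array}{cc}
J_1^2J_2^2 & 0\\
2hJ_1J_2(J_2-J_1) & (J_1^*)^2(J_2^*)^2
\end{array}
\right),
\]
and the off-diagonal term is generically nonzero: take $J_1=I$, $J_2=2I$ (commuting and $h$-symmetric for any non-degenerate symmetric $h$); then $\hat J(X+\eta)=2X+h(X)+2\eta$ and $\hat J^2(X+\eta)=4X+4h(X)+4\eta$, so the bottom-left block is $4h\neq 0$, in agreement with $2hJ_1J_2(J_2-J_1)=4h$. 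Thus statement (i) as printed needs the additional hypothesis $J_1J_2(J_2-J_1)=0$ (satisfied, e.g., when $J_1=J_2$, in which case $\hat J$ is already block-diagonal). The paper's own proof contains exactly the oversight you suspected: from its identities $J_1^*J_2^*hJ_1=hJ_2J_1^2$ and $J_1^*J_2^*hJ_2=hJ_1J_2^2$, the block computes to $2h(J_1J_2^2-J_1^2J_2)$, and the conclusion ``we get (i)'' silently discards this term. (Curiously, the block does vanish if $J_1$ and $J_2$ \emph{anti}commute, but then the top-left block of $\hat J^2$ is $(J_1J_2)^2=-J_1^2J_2^2$, not $J_1^2J_2^2$.) So your part (ii) is complete and coincides with the paper's argument, while your part (i) is honestly incomplete — and the gap lies in the statement itself, not in your method.
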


\begin{proof}
We have
\[
\hat J^2=\left(
                    \begin{array}{cc}
                      J_1J_2J_1J_2 & 0 \\
                      h(J_2-J_1)J_1J_2+J_1^*J_2^*h(J_2-J_1) & J_1^*J_2^*J_1^*J_2^* \\
                    \end{array}
                  \right).
\]

Taking into account that
\[
J_1^*J_2^*hJ_1=hJ_2J_1^2, \ \ J_1^*J_2^*hJ_2=hJ_1J_2^2,
\]
we get (i). Then (ii) follows from Proposition \ref{kj}.
\end{proof}

From Corollary \ref{c}, we get

\begin{corollary}
Let $\nabla$ be an affine connection, let $h$ be a non-degenerate symmetric $(0,2)$-tensor field, let $J_1$ and $J_2$ be two $h$-symmetric $(1,1)$-tensor fields on $M$, and let $\hat J$ given by (\ref{on}).
Then $\hat{\nabla}^{(\alpha)}\hat{J}=0$, for any $\alpha\in \mathbb{R}$, if and only if
\[
\left\{
  \begin{array}{ll}
    \nabla J_1=\nabla J_2 \\
    (\nabla J_i)\circ J_j +J_i\circ (\nabla J_j)=0, \ i,j=1,2, i\neq j
  \end{array}
\right..
\]

In particular, if $\nabla J_1=0$ and $\nabla J_2=0$, then $\hat \nabla^{(\alpha)} \hat J=0$, for any $\alpha\in \mathbb{R}$.
\end{corollary}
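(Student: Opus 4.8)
The plan is to recognize the tensor field $\hat J$ of (\ref{on}) as the specialization $\lambda_1=\lambda_2=0$ of the tensor field $\hat J$ given by (\ref{kl}), and then to invoke Corollary~\ref{c} directly.

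First I would verify that setting $\lambda_1=\lambda_2=0$ in (\ref{kl}) reproduces (\ref{on}). The diagonal blocks $J_1J_2+\lambda_1 I$ and $J_1^*J_2^*+\lambda_2 I$ collapse to $J_1J_2$ and $J_1^*J_2^*$, while the upper-right block $(\lambda_2J_1-\lambda_1J_2)h^{-1}$ vanishes. For the lower-left block $hJ_2-J_1^*h$, I would use the $h$-symmetry relations $J_1^*h=hJ_1$ and $hJ_2=J_2^*h$ to rewrite it as $hJ_2-hJ_1=h(J_2-J_1)$, which is exactly the lower-left entry of (\ref{on}). Hence the two matrices coincide when $\lambda_1=\lambda_2=0$.

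With this identification, Corollary~\ref{c} applies verbatim. Its characterization of $\hat\nabla^{(\alpha)}\hat J=0$ consists of the three conditions $\lambda_1=\lambda_2$, $\nabla J_1=\nabla J_2$, and $(\nabla J_i)\circ J_j+J_i\circ(\nabla J_j)=0$ for $i\neq j$. Since here $\lambda_1=\lambda_2=0$, the first condition is automatically satisfied and can be discarded, leaving precisely the system displayed in the statement. The ``in particular'' assertion is then immediate: if $\nabla J_1=0$ and $\nabla J_2=0$, then $\nabla J_1=\nabla J_2$ holds trivially and each summand of $(\nabla J_i)\circ J_j+J_i\circ(\nabla J_j)$ vanishes, so both conditions are met and $\hat\nabla^{(\alpha)}\hat J=0$ for every $\alpha\in\mathbb R$.

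I expect no genuine obstacle here; the only point requiring a moment's care is the block identity $hJ_2-J_1^*h=h(J_2-J_1)$, which rests solely on the $h$-symmetry of $J_1$ and $J_2$ and is the same relation already exploited in the proofs of Propositions~\ref{1} and~\ref{kj}.
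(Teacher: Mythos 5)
Your proposal is correct and follows essentially the same route as the paper, which introduces this corollary with the words ``From Corollary \ref{c}, we get'' and likewise obtains it by specializing $\lambda_1=\lambda_2=0$ in (\ref{kl}). Your explicit check that $hJ_2-J_1^*h=h(J_2-J_1)$ (via $J_1^*h=hJ_1$), identifying (\ref{kl}) at $\lambda_1=\lambda_2=0$ with (\ref{on}), is a detail the paper leaves implicit but is exactly right.
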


\begin{remark}
Under the assumptions of Proposition \ref{k}, if $J_1^2J_2^2=-I$, then $\hat J$ is a generalized almost complex structure,
and if $J_1^2J_2^2=I$, then $\hat J$ is a generalized almost product structure.
\end{remark}

\subsection{Generalized triple structures via two anticommuting almost complex or almost product structures}

Further, we shall consider generalized almost quaternionic and generalized almost para-quaternionic structures obtained by means of two almost complex and almost product structures, $J_1$ and $J_2$, which anticommute.

Let $J_1$ and $J_2$ be two anticommuting $(1,1)$-tensor fields and let
\begin{equation} \label{n}
\hat{J}_i:=\left(
                    \begin{array}{cc}
                      J_i & 0 \\
                      0 & -J_i^* \\
                    \end{array}
                  \right), \ i=1,2.
\end{equation}

\begin{proposition}
Let $J_1$ and $J_2$ be two almost complex structures on $M$ such that $J_1J_2=-J_2J_1$. Then $(\hat{J}_1,\hat{J}_2,\hat{J}_1\hat{J}_2)$ is a generalized almost quaternionic structure on $M$, where $\hat{J}_1$ and $\hat J_2$ are
the generalized almost complex structures given by (\ref{n}).
\end{proposition}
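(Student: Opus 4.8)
The plan is to verify directly the three conditions in the definition of a generalized almost quaternionic structure, namely that $\hat{J}_1$, $\hat{J}_2$ and $\hat{J}_3:=\hat{J}_1\hat{J}_2$ are all generalized almost complex structures (each squaring to $-I$) and that $\hat{J}_1\hat{J}_2=-\hat{J}_2\hat{J}_1$. Since the two matrices in (\ref{n}) are block-diagonal, every product that arises is again block-diagonal, so the whole verification decouples into a computation with $J_1,J_2$ in the top-left block and with $J_1^*,J_2^*$ in the bottom-right block; in particular no $h$-symmetry of the $J_i$ is needed.

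First I would record the two elementary facts that drive everything: an almost complex structure satisfies $J_i^2=-I$, and passing to the dual is an algebra anti-homomorphism, so $(J_iJ_j)^*=J_j^*J_i^*$ and $(J_i^2)^*=(J_i^*)^2$. From $J_i^2=-I$ one gets $(J_i^*)^2=(J_i^2)^*=-I$ as well, whence
\[
\hat{J}_i^2=\left(\begin{array}{cc} J_i^2 & 0 \\ 0 & (J_i^*)^2 \end{array}\right)=-I,\quad i=1,2,
\]
so $\hat{J}_1$ and $\hat{J}_2$ are generalized almost complex structures. For the anticommutation I would compute $\hat{J}_1\hat{J}_2$ and $\hat{J}_2\hat{J}_1$ as the block-diagonal matrices with entries $(J_1J_2,\,J_1^*J_2^*)$ and $(J_2J_1,\,J_2^*J_1^*)$, respectively. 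The top-left blocks anticommute by hypothesis, $J_1J_2=-J_2J_1$; applying $*$ to this identity and using the anti-homomorphism property gives $J_2^*J_1^*=-J_1^*J_2^*$, so the bottom-right blocks anticommute too, and therefore $\hat{J}_1\hat{J}_2=-\hat{J}_2\hat{J}_1$.

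It then remains to check that $\hat{J}_3=\hat{J}_1\hat{J}_2$ is itself a generalized almost complex structure. As $\hat{J}_3$ is block-diagonal with entries $J_1J_2$ and $J_1^*J_2^*$, its square has top-left block $(J_1J_2)^2=J_1J_2J_1J_2=-J_1^2J_2^2=-I$, where anticommutation is used to move one factor past another and $J_i^2=-I$ finishes it, and bottom-right block $(J_1^*J_2^*)^2=-I$ by the identical computation for the duals. Hence $\hat{J}_3^2=-I$, and all the defining conditions hold. I do not anticipate any genuine obstacle: the argument is purely algebraic, and the only point requiring care is the consistent bookkeeping of the dual tensors $J_i^*$, specifically that the hypotheses $J_i^2=-I$ and $J_1J_2=-J_2J_1$ transfer to the $J_i^*$ through the anti-homomorphism $*$.
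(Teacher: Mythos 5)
Your proof is correct and follows essentially the same route as the paper, which simply records $\hat J_1^2=-I$, $\hat J_2^2=-I$ and $\hat J_1\hat J_2=-\hat J_2\hat J_1$ and concludes; you merely make explicit the blockwise details the paper leaves implicit, namely that $(J_i^*)^2=(J_i^2)^*=-I$, that anticommutation transfers to the duals via the anti-homomorphism $(J_1J_2)^*=J_2^*J_1^*$, and that $(\hat J_1\hat J_2)^2=-I$ follows from anticommutation. Your observation that no $h$-symmetry is needed for these block-diagonal structures is also consistent with the paper, whose hypotheses for this proposition indeed make no reference to $h$.
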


\begin{proof}
We have $\hat J_1^2=-I$, $\hat J_2^2=-I$, $\hat J_1\hat J_2=-\hat J_2\hat J_1$,
hence the conclusion.
\end{proof}

\begin{proposition}
Let $(J_1,J_2)$ be an almost complex-product structure on $M$, i.e., $J_1$ is an almost complex structure and $J_2$ is an almost product structure such that $J_1J_2=-J_2J_1$.
Then $(\hat{J}_1,\hat{J}_2,\hat{J}_1\hat{J}_2)$ is a generalized almost para-quaternionic structure on $M$, where $\hat{J}_1$ is
the generalized almost complex structure and $\hat{J}_2$ is the generalized almost product structure given by (\ref{n}).
\end{proposition}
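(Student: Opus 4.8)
The plan is to verify directly the defining conditions of a generalized almost para-quaternionic structure, exploiting the block-diagonal form of $\hat J_1$ and $\hat J_2$ in (\ref{n}): since both operators are diagonal, every product is computed block by block, and the whole statement reduces to algebraic identities among $J_1$, $J_2$ and their adjoints $J_1^*$, $J_2^*$. The argument runs exactly parallel to the quaternionic case treated in the previous proposition, the only change being that here $\hat J_2^2=I$ rather than $\hat J_2^2=-I$.

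First I would check that $\hat J_1$ is a generalized almost complex structure and $\hat J_2$ a generalized almost product structure. Squaring a block-diagonal operator squares each block, so $\hat J_1^2$ has diagonal blocks $J_1^2$ and $(J_1^*)^2$; since $J_1^2=-I$ and the adjoint commutes with squaring, $(J_1^*)^2=(J_1^2)^*=-I$, giving $\hat J_1^2=-I$. The identical computation with $J_2^2=I$ yields $(J_2^*)^2=I$ and hence $\hat J_2^2=I$.

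Next I would establish the anticommutation $\hat J_1\hat J_2=-\hat J_2\hat J_1$. Multiplying the diagonal operators, $\hat J_1\hat J_2$ has blocks $J_1J_2$ and $J_1^*J_2^*$, while $\hat J_2\hat J_1$ has blocks $J_2J_1$ and $J_2^*J_1^*$. The upper blocks anticommute by hypothesis, $J_1J_2=-J_2J_1$. The key point for the lower blocks is that the adjoint reverses the order of a product: taking adjoints in $J_1J_2=-J_2J_1$ gives $J_2^*J_1^*=-J_1^*J_2^*$, i.e. $J_1^*J_2^*=-J_2^*J_1^*$, so the lower blocks anticommute as well. Hence $\hat J_1\hat J_2=-\hat J_2\hat J_1$.

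Finally, putting $\hat J_3:=\hat J_1\hat J_2$, I would use the anticommutation just proved together with $\hat J_1^2=-I$ and $\hat J_2^2=I$ to get $\hat J_3^2=\hat J_1\hat J_2\hat J_1\hat J_2=-\hat J_1^2\hat J_2^2=-(-I)I=I$, so $\hat J_3$ is a generalized almost product structure. By the Definition, $(\hat J_1,\hat J_2,\hat J_3)$ is then a generalized almost para-quaternionic structure. There is no genuine obstacle in this proof; the only point requiring a little care is the sign bookkeeping in the lower blocks, which is settled once one records that taking adjoints turns $J_1J_2=-J_2J_1$ into $J_1^*J_2^*=-J_2^*J_1^*$.
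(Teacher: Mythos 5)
Your proof is correct and follows essentially the same route as the paper, whose proof is just the one-line assertion that $\hat J_1^2=-I$, $\hat J_2^2=I$ and $\hat J_1\hat J_2=-\hat J_2\hat J_1$, hence the conclusion. You simply supply the block-diagonal computations and the dualization of $J_1J_2=-J_2J_1$ into $J_1^*J_2^*=-J_2^*J_1^*$ that the paper leaves implicit, together with the check $\hat J_3^2=I$.
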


\begin{proof}
We have $\hat J_1^2=-I$, $\hat J_2^2=I$, $\hat J_1\hat J_2=-\hat J_2\hat J_1$,
hence the conclusion.
\end{proof}

\begin{proposition}
Let $J_1$ and $J_2$ be two almost product structures on $M$ such that $J_1J_2=-J_2J_1$.
Then $(\hat J_1\hat{J}_2,\hat{J}_1,\hat{J}_2)$ is a generalized almost para-quaternionic structure on $M$, where $\hat{J}_1$ and $\hat J_2$ are
the generalized almost product structures given by (\ref{n}).
\end{proposition}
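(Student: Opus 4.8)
The plan is to mirror the block-matrix computations of the three preceding propositions, since $\hat J_1$ and $\hat J_2$ from (\ref{n}) are block diagonal with diagonal blocks $J_i$ and $-J_i^*$. First I would use that $J_1$ and $J_2$ are almost product structures, so $J_1^2=J_2^2=I$ and hence $(J_i^*)^2=(J_i^2)^*=I$ for $i=1,2$; squaring the block-diagonal $\hat J_i$ then gives $\hat J_i^2=I$, so both $\hat J_1$ and $\hat J_2$ are generalized almost product structures, as required for the two product members of the triple.

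Next I would verify the anticommutation $\hat J_1\hat J_2=-\hat J_2\hat J_1$. The product $\hat J_1\hat J_2$ is again block diagonal, with blocks $J_1J_2$ and $J_1^*J_2^*$, and the essential input is that the adjoints anticommute as well: applying $*$ to $J_1J_2=-J_2J_1$ and using $(J_1J_2)^*=J_2^*J_1^*$ yields $J_1^*J_2^*=-J_2^*J_1^*$. Since both diagonal blocks reverse sign when the factors are interchanged, $\hat J_1\hat J_2=-\hat J_2\hat J_1$ follows. Combining this with $\hat J_1^2=\hat J_2^2=I$ gives $(\hat J_1\hat J_2)^2=\hat J_1\hat J_2\hat J_1\hat J_2=-\hat J_1^2\hat J_2^2=-I$, so $\hat J_1\hat J_2$ is a generalized almost complex structure, which serves as the complex member of the triple.

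It then remains to confirm that the complex structure $\hat J_1\hat J_2$ together with the product structures $\hat J_1,\hat J_2$ obeys the defining closure relation. Here I would compute, using $\hat J_i^2=I$ and the anticommutation, the products $(\hat J_1\hat J_2)\hat J_2=\hat J_1\hat J_2^2=\hat J_1$ and $(\hat J_1\hat J_2)\hat J_1=\hat J_1(\hat J_2\hat J_1)=-\hat J_1^2\hat J_2=-\hat J_2$; together with the pairwise anticommutation recorded in the Remark, these exhibit the triple as generalized almost para-quaternionic.

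The one point that needs care is precisely this last step: arranging the two product structures in the correct order so that the defining relation ``complex member times one product member equals the other product member'' holds with the right sign, i.e.\ recognizing that it is $(\hat J_1\hat J_2)\hat J_2=\hat J_1$ (rather than the superficially expected $(\hat J_1\hat J_2)\hat J_1=\hat J_2$, which instead produces $-\hat J_2$) that realizes the closure. All the remaining relations then follow automatically from $\hat J_1^2=\hat J_2^2=I$, $(\hat J_1\hat J_2)^2=-I$, and the mutual anticommutation of the three members.
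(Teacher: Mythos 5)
Your verification is correct and follows essentially the same route as the paper: the paper's entire proof consists of the observation that $\hat J_1^2=I$, $\hat J_2^2=I$ and $\hat J_1\hat J_2=-\hat J_2\hat J_1$, ``hence the conclusion,'' and you obtain exactly these facts by the same block-diagonal computations (using $(J_i^*)^2=(J_i^2)^*=I$ and dualizing $J_1J_2=-J_2J_1$ to get $J_1^*J_2^*=-J_2^*J_1^*$), together with $(\hat J_1\hat J_2)^2=-\hat J_1^2\hat J_2^2=-I$. Where you go beyond the paper is the closure relation, and your care there is warranted: as you correctly compute, with the stated ordering $(\hat J_1\hat J_2,\hat J_1,\hat J_2)$ the defining identity ``first member times second member equals third member'' fails by a sign, since $(\hat J_1\hat J_2)\hat J_1=-\hat J_2$, whereas $(\hat J_1\hat J_2)\hat J_2=\hat J_1$ holds on the nose. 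Strictly read against the paper's definition, your computation therefore establishes the para-quaternionic property for the reordered triple $(\hat J_1\hat J_2,\hat J_2,\hat J_1)$ (equivalently, for the stated triple with $\hat J_2$ replaced by $-\hat J_2$, which is again a generalized almost product structure anticommuting with the other two members); the paper's one-line proof never addresses the closure identity and implicitly treats such triples up to order and sign of the members, as it also does in the parallel propositions of Section 3.2. So your proof is sound and in fact more complete than the paper's; the only refinement would be to say explicitly that the closure is realized for the ordering $(\hat J_1\hat J_2,\hat J_2,\hat J_1)$ rather than for the one literally printed in the proposition.
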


\begin{proof}
We have $\hat J_1^2=I$, $\hat J_2^2=I$, $\hat J_1\hat J_2=-\hat J_2\hat J_1$,
hence the conclusion.
\end{proof}

Denote by
\begin{equation} \label{m}
\hat J=:\hat{J}_1\hat{J}_2=\left(
                    \begin{array}{cc}
                      J_1J_2 & 0 \\
                      0 & J_1^*J_2^* \\
                    \end{array}
                  \right).
\end{equation}
We can state the following results.

\begin{proposition}\label{kal}
Let $\nabla$ be an affine connection, let $h$ be a non-degenerate symmetric $(0,2)$-tensor field, let $J_1$ and $J_2$ be two $h$-symmetric $(1,1)$-tensor fields on $M$, and let $\hat J$ be given by (\ref{m}).
Then the following assertions are equivalent:

(i) $\hat \nabla \hat J=0$;

(ii) $(\nabla J_i)\circ J_j +J_i\circ (\nabla J_j)=0, \ i,j=1,2, i\neq j;$

(iii) $\hat \nabla^* \hat J=0$.

In particular, if $\nabla J_1=0$ and $\nabla J_2=0$, then $\hat \nabla \hat J=0$ and $\hat \nabla^* \hat J=0$.
\end{proposition}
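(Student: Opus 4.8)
The plan is to prove the three equivalences by a direct block computation, entirely parallel to the proof of Proposition~\ref{kj} but with the simpler operator of (\ref{m}): here $\hat J$ acts diagonally, $\hat J(Y+\beta)=J_1J_2Y+J_1^*J_2^*\beta$, because the off-diagonal blocks vanish. First I would expand $(\hat\nabla_{X+\eta}\hat J)(Y+\beta)=\hat\nabla_{X+\eta}(\hat J(Y+\beta))-\hat J(\hat\nabla_{X+\eta}(Y+\beta))$ by substituting the defining formula of $\hat\nabla$, and split the outcome into its $TM$-valued and $T^*M$-valued components; the vanishing of $\hat\nabla\hat J$ is then equivalent to the vanishing of each component separately.

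For (i) $\Leftrightarrow$ (ii): the tangent component collapses, through the Leibniz identity $\nabla_X(J_1J_2)=(\nabla_XJ_1)J_2+J_1(\nabla_XJ_2)$, to $\big((\nabla_XJ_1)J_2+J_1(\nabla_XJ_2)\big)Y$, whose vanishing is the $(i,j)=(1,2)$ case of (ii). The cotangent component is $h\big(\nabla_X(h^{-1}(J_1^*J_2^*\beta))\big)-J_1^*J_2^*\,h\big(\nabla_X(h^{-1}\beta)\big)$; pushing $h$ and $h^{-1}$ through the transposes by means of $hJ_i=J_i^*h$, $J_ih^{-1}=h^{-1}J_i^*$ and $(J_1J_2)^*=J_2^*J_1^*$ rewrites it as $h\big(((\nabla_XJ_2)J_1+J_2(\nabla_XJ_1))(h^{-1}\beta)\big)$, whose vanishing is the $(i,j)=(2,1)$ case of (ii). Since $h$ is non-degenerate, $\hat\nabla\hat J=0$ is equivalent to the simultaneous vanishing of both brackets, i.e. to (ii). Note that, the lower-left block now being $0$ rather than $h(J_2-J_1)$, the computation produces no term forcing $\nabla J_1=\nabla J_2$, which is why (ii) is weaker than the corresponding condition in Proposition~\ref{kj}.

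For (ii) $\Leftrightarrow$ (iii) I would repeat the scheme on $(\hat\nabla^*_{X+\eta}\hat J)(Y+\beta)$ using the formula (\ref{22}) for $\hat\nabla^*$. Now the roles of the two blocks are interchanged: the cotangent block differentiates $J_1^*J_2^*$ directly and gives $(\nabla_X(J_1^*J_2^*))\beta$, while the tangent block, after conjugation by $h$, gives $h^{-1}\big((\nabla_X(J_2^*J_1^*))(h(Y))\big)$. Invoking the duality $((\nabla_XJ_i^*)\beta)(Y)=\beta((\nabla_XJ_i)Y)$, these two conditions are precisely the transposes of the two obtained for $\hat\nabla$, hence again equivalent to (ii). The final ``in particular'' statement is then immediate, since $\nabla J_1=\nabla J_2=0$ annihilates every bracket appearing above.

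The step I expect to be delicate is not conceptual but organizational: keeping the tangent and cotangent parts rigorously separate, and above all conjugating $h$ and $h^{-1}$ past the products $J_1^*J_2^*$ in the right order, so that the two blocks genuinely deliver the two complementary orderings $J_1J_2$ and $J_2J_1$ whose joint vanishing is (ii). Matching the $\hat\nabla^*$-conditions to the $\hat\nabla$-conditions then hinges entirely on the transpose duality $((\nabla_XJ_i^*)\beta)(Y)=\beta((\nabla_XJ_i)Y)$.
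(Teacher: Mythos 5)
Your proposal is correct and matches the paper's proof essentially step for step: the same block expansion of $(\hat\nabla_{X+\eta}\hat J)(Y+\beta)$ and $(\hat\nabla^*_{X+\eta}\hat J)(Y+\beta)$ into tangent and cotangent components, the same conjugation of the cotangent block through $h$ (the paper's identity $J_1^*J_2^*h=hJ_2J_1$, equivalently $h^{-1}J_1^*J_2^*=J_2J_1h^{-1}$) to produce the complementary $(2,1)$-ordered bracket, the same Leibniz expansions $\nabla_X(J_1J_2)=(\nabla_XJ_1)J_2+J_1(\nabla_XJ_2)$, and the same duality $((\nabla_XJ_i^*)\beta)(Y)=\beta((\nabla_XJ_i)Y)$ to pass between (ii) and (iii). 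Your closing remark that the vanishing lower-left block of (\ref{m}) is what removes the condition $\nabla J_1=\nabla J_2$ is likewise consistent with the paper, which treats the case (\ref{kl}) by the identical method in Proposition \ref{kj}.
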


\begin{proof}
For any $X,Y\in {\Gamma}^{\infty}(TM)$ and $\eta,\beta \in {\Gamma}^{\infty}(T^*M)$, we have
$$(\hat{\nabla}_{X+\eta}\hat{J})(Y+\beta):=\hat{\nabla}_{X+\eta}(\hat{J}(Y+\beta))-\hat{J}(\hat{\nabla}_{X+\eta}(Y+\beta))=$$
$$=\hat{\nabla}_{X+\eta}\Big(J_1J_2Y+J_1^*J_2^*\beta\Big)-
\hat J\Big(\nabla_XY+h(\nabla_X(h^{-1}(\beta)))\Big)=$$
$$=\nabla_X(J_1J_2Y)+h\big(\nabla_X(h^{-1}(J_1^*J_2^*\beta))\big)-\Big(J_1J_2(\nabla_XY)+J_1^*J_2^*(h(\nabla_X(h^{-1}(\beta))))\Big)=$$
$$=(\nabla_X(J_1J_2))Y+h(\nabla_X(h^{-1}(J_1^*J_2^*\beta)))-J_1^*J_2^*h(\nabla_X(h^{-1}(\beta))).$$

Taking into account that
\[
J_1^*J_2^*h=hJ_2J_1
\]
(and, consequently, $h^{-1}J_1^*J_2^*=J_2J_1h^{-1}$), we get
\[
h(\nabla_X(h^{-1}(J_1^*J_2^*\beta)))-J_1^*J_2^*h(\nabla_X(h^{-1}(\beta)))=
\]
\[
=h(\nabla_X((J_2J_1)(h^{-1}(\beta))))-h((J_2J_1)(\nabla_X(h^{-1}(\beta))))
=h\big((\nabla_X(J_2J_1))(h^{-1}(\beta))\big),
\]
hence
\[
(\hat{\nabla}_{X+\eta}\hat{J})(Y+\beta)=(\nabla_X(J_1J_2))Y+h\big((\nabla_X(J_2J_1))(h^{-1}(\beta))\big).
\]

Also:
\[
(\nabla_X(J_1J_2))Y=(\nabla_XJ_1)(J_2Y)+J_1((\nabla_XJ_2)Y)
\]
and we obtain
\[
(\hat{\nabla}_{X+\eta}\hat{J})(Y+\beta)=(\nabla_XJ_1)(J_2Y)+J_1((\nabla_XJ_2)Y)+
\]
\[
+h\big((\nabla_XJ_2)(J_1(h^{-1}(\beta)))+J_2((\nabla_XJ_1)(h^{-1}(\beta)))\big).
\]

Therefore, $\hat \nabla \hat J=0$ if and only if
\[
\left\{
  \begin{array}{ll}
    (\nabla_XJ_1)(J_2Y)+J_1((\nabla_XJ_2)Y)=0 \\
    (\nabla_XJ_2)(J_1(h^{-1}(\beta)))+J_2((\nabla_XJ_1)(h^{-1}(\beta)))=0
  \end{array}
\right.,
\]
for any $X,Y\in \Gamma^{\infty}(TM)$ and $\beta\in \Gamma^{\infty}(T^*M)$, i.e.,
\pagebreak
\[
\left\{
  \begin{array}{ll}
    (\nabla J_1)\circ J_2+J_1\circ (\nabla J_2)=0 \\
    (\nabla J_2)\circ J_1+J_2\circ (\nabla J_1)=0
  \end{array}
\right.
\]
and we get (i) $\Longleftrightarrow$ (ii).

Now
$$(\hat{\nabla}^*_{X+\eta}\hat{J})(Y+\beta):=\hat{\nabla}^*_{X+\eta}(\hat{J}(Y+\beta))-\hat{J}(\hat{\nabla}^*_{X+\eta}(Y+\beta))=$$
$$=\hat{\nabla}^*_{X+\eta}\Big(J_1J_2Y+J_1^*J_2^*\beta\Big)-\hat J\Big(h^{-1}(\nabla_X(h(Y)))+\nabla_X\beta\Big)=$$
$$=h^{-1}(\nabla_X(h(J_1J_2Y)))+\nabla_X(J_1^*J_2^*\beta)-$$$$-\Big(J_1J_2(h^{-1}(\nabla_X(h(Y))))+J_1^*J_2^*(\nabla_X\beta)\Big).$$

Taking into account that
\[
J_1^*J_2^*h=hJ_2J_1
\]
(and, consequently, $h^{-1}J_1^*J_2^*=J_2J_1h^{-1}$), we get
\[
(\hat{\nabla}^*_{X+\eta}\hat{J})(Y+\beta)=h^{-1}\big((\nabla_X(J_2^*J_1^*))(h(Y))\big)+(\nabla_X(J_1^*J_2^*))\beta.
\]

Also:
\[
(\nabla_X(J_1^*J_2^*))\beta=(\nabla_XJ_1^*)(J_2^*\beta)+J_1^*((\nabla_XJ_2^*)\beta)
\]
and we obtain
\[
(\hat{\nabla}^*_{X+\eta}\hat{J})(Y+\beta)=h^{-1}\big((\nabla_XJ_2^*)(J_1^*(h(Y)))+J_2^*((\nabla_XJ_1^*)(h(Y)))\big)+
\]
\[
+(\nabla_XJ_1^*)(J_2^*\beta)+J_1^*((\nabla_XJ_2^*)\beta).
\]

Therefore, $\hat \nabla^* \hat J=0$ if and only if
\[
\left\{
  \begin{array}{ll}
    (\nabla_XJ_2^*)(J_1^*(h(Y)))+J_2^*((\nabla_XJ_1^*)(h(Y)))=0 \\
    (\nabla_XJ_1^*)(J_2^*\beta)+J_1^*((\nabla_XJ_2^*)\beta)=0
  \end{array}
\right.,
\]
for any $X,Y\in \Gamma^{\infty}(TM)$ and $\beta\in \Gamma^{\infty}(T^*M)$, i.e.,
\[
\left\{
  \begin{array}{ll}
    (\nabla J_2^*)\circ J_1^*+J_2^*\circ (\nabla J_1^*)=0\\
    (\nabla J_1^*)\circ J_2^*+J_1^*\circ (\nabla J_2^*)=0
      \end{array}
\right.
\]
and (ii) $\Longleftrightarrow$ (iii) follows since $((\nabla_X J_i^*)\beta)(Y)=\beta((\nabla_XJ_i)Y)$, for any $X,Y\in \Gamma^{\infty}(TM)$ and $\beta \in \Gamma^{\infty}(T^*M)$, $i=1,2$.
\end{proof}

\begin{corollary}
Let $\nabla$ be an affine connection, let $h$ be a non-degenerate symmetric $(0,2)$-tensor field, let $J_1$ and $J_2$ be two $h$-symmetric $(1,1)$-tensor fields on $M$, and let $\hat J$ be given by (\ref{m}).
Then $\hat{\nabla}^{(\alpha)}\hat{J}=0$, for any $\alpha\in \mathbb{R}$, if and only if
\[
(\nabla J_i)\circ J_j +J_i\circ (\nabla J_j)=0, \ i,j=1,2, i\neq j.
\]

In particular, if $\nabla J_1=0$ and $\nabla J_2=0$, then $\hat \nabla^{(\alpha)} \hat J=0$, for any $\alpha\in \mathbb{R}$.
\end{corollary}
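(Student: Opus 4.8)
The plan is to obtain the statement as a direct consequence of Proposition~\ref{kal} together with the fact that a covariant derivative depends affinely on its connection. First I would record that, by the very definition of the $\alpha$-connection,
\[
\hat{\nabla}^{(\alpha)}=\frac{1+\alpha}{2}\hat{\nabla}+\frac{1-\alpha}{2}\hat{\nabla}^*,
\]
and since the difference of two connections on $TM\oplus T^*M$ is tensorial, applying each side to the fixed structure $\hat J$ gives, for all $X,Y\in\Gamma^{\infty}(TM)$ and $\eta,\beta\in\Gamma^{\infty}(T^*M)$,
\[
(\hat{\nabla}^{(\alpha)}_{X+\eta}\hat{J})(Y+\beta)=\frac{1+\alpha}{2}(\hat{\nabla}_{X+\eta}\hat{J})(Y+\beta)+\frac{1-\alpha}{2}(\hat{\nabla}^*_{X+\eta}\hat{J})(Y+\beta).
\]
This identity is the only computational input, and it is immediate from linearity.

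For the reverse implication I would assume the algebraic condition $(\nabla J_i)\circ J_j+J_i\circ(\nabla J_j)=0$ for $i\neq j$. By Proposition~\ref{kal}, this condition is equivalent to \emph{both} $\hat{\nabla}\hat{J}=0$ and $\hat{\nabla}^*\hat{J}=0$ simultaneously. Substituting these two vanishings into the displayed combination forces each summand to vanish, so $\hat{\nabla}^{(\alpha)}\hat{J}=0$ for every $\alpha\in\mathbb{R}$. For the forward implication I would specialize the quantifier over $\alpha$: if $\hat{\nabla}^{(\alpha)}\hat{J}=0$ for every $\alpha$, then taking $\alpha=1$ yields $\hat{\nabla}\hat{J}=0$ (alternatively $\alpha=-1$ yields $\hat{\nabla}^*\hat{J}=0$), and the equivalence (i)$\Longleftrightarrow$(ii) of Proposition~\ref{kal} returns exactly the stated condition on $J_1$ and $J_2$.

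The final assertion follows at once: if $\nabla J_1=0$ and $\nabla J_2=0$, then trivially $(\nabla J_i)\circ J_j+J_i\circ(\nabla J_j)=0$, so the equivalence just proved gives $\hat{\nabla}^{(\alpha)}\hat{J}=0$ for all $\alpha$.

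There is no genuine obstacle in this argument, since its entire content is already packaged in Proposition~\ref{kal}; this mirrors the one-line derivation of Corollary~\ref{c} from Proposition~\ref{kj}. The only point deserving a word of care is the handling of the quantifier ``for any $\alpha$'': the reverse direction produces vanishing for all $\alpha$, while the forward direction needs only a single well-chosen value ($\alpha=\pm1$), so the equivalence is really ``$\hat{\nabla}^{(\alpha)}\hat{J}=0$ for some (equivalently, every) $\alpha$''.
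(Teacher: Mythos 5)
Your proposal is correct and follows exactly the paper's own proof: the paper likewise writes $\hat{\nabla}^{(\alpha)}\hat{J}=\frac{1+\alpha}{2}\hat{\nabla}\hat{J}+\frac{1-\alpha}{2}\hat{\nabla}^*\hat{J}$ and invokes Proposition \ref{kal}, with your version merely making explicit the specialization $\alpha=\pm1$ in the forward direction. One minor caveat: your closing aside that the condition is equivalent to ``$\hat{\nabla}^{(\alpha)}\hat{J}=0$ for \emph{some} $\alpha$'' is only justified when that $\alpha$ is $\pm1$, since for a single generic $\alpha$ the vanishing of the linear combination does not by itself force each summand to vanish; this aside is inessential and does not affect the validity of the proof.
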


\begin{proof}
Since we have
\[
\hat{\nabla}^{(\alpha)}\hat{J}=\frac{1+\alpha}{2}\hat{\nabla}\hat{J}+
\frac{1-\alpha}{2}\hat{\nabla}^*\hat{J},
\]
we get the conclusion from Proposition \ref{kal}.
\end{proof}

\begin{proposition}
Let $\nabla$ be an affine connection, let $h$ be a non-degenerate symmetric $(0,2)$-tensor field, let $J_1$ and $J_2$ be two $h$-symmetric $(1,1)$-tensor fields on $M$, and let $\hat J$ be given by (\ref{m}).
Then
$\hat J$ is $\nabla$-integrable if and only if
\[\left\{
    \begin{array}{ll}
      N_{J_1J_2}=0 \\
      (\nabla_{J_1J_2X}(J_1^*J_2^*))\beta=J_1^*J_2^*((\nabla_{X}(J_1^*J_2^*))\beta)
    \end{array}
  \right.,
\]
for any $X\in \Gamma^{\infty}(TM)$ and $\beta\in \Gamma^{\infty}(T^*M)$.
\end{proposition}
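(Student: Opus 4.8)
The plan is to exploit the block-diagonal form (\ref{m}) of $\hat J$, which acts as $J_1J_2$ on $\Gamma^{\infty}(TM)$ and as $J_1^*J_2^*$ on $\Gamma^{\infty}(T^*M)$, so that the Nijenhuis tensor decouples according to the type of its arguments. First I would check that $N_{\hat J}^{\nabla}$ is a genuine tensor, i.e. $C^{\infty}(M)$-bilinear, exactly as in the classical case: since $\hat J$ is a morphism and the $\nabla$-bracket obeys the Leibniz-type identity $[f\sigma,\tau]_{\nabla}=f[\sigma,\tau]_{\nabla}-(\pi(\tau)f)\,\sigma$, where $\pi$ denotes the projection onto $TM$, the non-tensorial terms produced by inserting a factor $f$ cancel in pairs. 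As $N_{\hat J}^{\nabla}$ is moreover antisymmetric (the $\nabla$-bracket is), it suffices to evaluate it on the three types of pure pairs: two vector fields, one vector field and one $1$-form, and two $1$-forms, the covector--vector case following by antisymmetry.

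Next I would treat these three cases. For $X,Y\in\Gamma^{\infty}(TM)$ the $\nabla$-bracket reduces to the Lie bracket and $\hat J$ restricts to $J_1J_2$, so $N_{\hat J}^{\nabla}(X,Y)=N_{J_1J_2}(X,Y)$, whose vanishing is precisely the first condition. For $\eta,\beta\in\Gamma^{\infty}(T^*M)$ every $\nabla$-bracket of two $1$-forms vanishes, since both anchors are zero; hence each of the four terms defining $N_{\hat J}^{\nabla}(\eta,\beta)$ is zero and this case imposes no constraint. The substantive case is the mixed one: for $X\in\Gamma^{\infty}(TM)$ and $\beta\in\Gamma^{\infty}(T^*M)$ a direct expansion via $[X+\eta,Y+\beta]_{\nabla}=[X,Y]+\nabla_X\beta-\nabla_Y\eta$ gives
\[
N_{\hat J}^{\nabla}(X,\beta)=\nabla_{J_1J_2X}(J_1^*J_2^*\beta)-J_1^*J_2^*(\nabla_{J_1J_2X}\beta)-J_1^*J_2^*(\nabla_X(J_1^*J_2^*\beta))+(J_1^*J_2^*)^2\,\nabla_X\beta.
\]
Regrouping the first two terms as $(\nabla_{J_1J_2X}(J_1^*J_2^*))\beta$ and the last two as $-J_1^*J_2^*((\nabla_X(J_1^*J_2^*))\beta)$, the vanishing of $N_{\hat J}^{\nabla}(X,\beta)$ becomes exactly the second condition.

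Finally I would combine the three cases: by tensoriality and antisymmetry $N_{\hat J}^{\nabla}\equiv 0$ if and only if it vanishes on each pure type, that is, if and only if $N_{J_1J_2}=0$ together with the displayed identity for $J_1^*J_2^*$ for all $X$ and $\beta$. The step I expect to demand the most care is the mixed case, where one must correctly split each difference $\nabla_{(\cdot)}(J_1^*J_2^*\beta)-J_1^*J_2^*\nabla_{(\cdot)}\beta$ into the covariant derivative $(\nabla_{(\cdot)}(J_1^*J_2^*))\beta$ of the operator $J_1^*J_2^*$; beyond this bookkeeping, the only genuinely structural input is the tensoriality argument of the first paragraph, which legitimises reducing the condition $N_{\hat J}^{\nabla}=0$ to the pure arguments above.
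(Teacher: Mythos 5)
Your proposal is correct and takes essentially the same route as the paper: the paper's proof likewise evaluates $N_{\hat J}^{\nabla}$ on the three pure types of arguments, obtaining $N_{J_1J_2}(X,Y)$ on pairs of vector fields, the displayed identity in the mixed case (with the $1$-form written as $h(Y)$, which is equivalent to your general $\beta$ by non-degeneracy of $h$), and zero on pairs of $1$-forms. Your explicit verification of the $C^{\infty}(M)$-bilinearity and antisymmetry of $N_{\hat J}^{\nabla}$, which legitimises the reduction to pure arguments, is a detail the paper leaves implicit.
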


\begin{proof}
For any $X,Y\in {\Gamma}^{\infty}(TM)$, we have
\[
N_{\hat{J}}^{\nabla}(X,Y)=N_{{J_1J_2}}(X,Y),
\]
\[
N_{\hat{J}}^{\nabla}(X,h(Y))=(\nabla_{J_1J_2X}(J_1^*J_2^*))(h(Y))
-J_1^*J_2^*((\nabla_{X}(J_1^*J_2^*))(h(Y))),
\]
\[
N_{\hat{J}}^{\nabla}(h(X),h(Y))=0.
\]
Then the proof is complete.
\end{proof}

On the other hand, we have the following.
\begin{proposition} Let $J$ be a $h$-symmetric and $\nabla$-parallel $(1,1)$-tensor field on $M$ such that the Nijenhuis tensor of $J$, $N_{J}$, vanishes. Then
the generalized structure \linebreak
$\hat J:=\left(
                                     \begin{array}{cc}
                                       J & 0 \\
                                       0 & -J^* \\
                                     \end{array}
                                   \right)
$ is $\nabla$-integrable. Moreover, $\hat \nabla^* \hat J=0$.
\end{proposition}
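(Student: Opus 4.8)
The plan is to treat the two assertions separately, both resting on the same two structural identities. Since $J$ is $h$-symmetric we have $hJ=J^*h$, equivalently $Jh^{-1}=h^{-1}J^*$ and $h(JY)=J^*(h(Y))$; and since $\nabla J=0$ the transpose tensor is parallel as well, $\nabla J^*=0$, because $((\nabla_XJ^*)\beta)(Y)=\beta((\nabla_XJ)Y)$ — the same duality already used in the proof of Proposition~\ref{mm3}. Writing out the action of the structure, $\hat J(X+\eta)=JX-J^*\eta$ and $\hat J^2(X+\eta)=J^2X+(J^*)^2\eta$, I note that no relation such as $J^2=\pm I$ is required, so the argument is insensitive to whether $\hat J$ squares to $\pm I$.

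For $\nabla$-integrability I would use that $N_{\hat J}^{\nabla}$ is tensorial (as it is treated implicitly in the preceding propositions) together with the non-degeneracy of $h$, so that it suffices to evaluate it on the three generating pairs $(X,Y)$, $(X,h(Y))$ and $(h(X),h(Y))$ with $X,Y\in\Gamma^{\infty}(TM)$. On purely tangent arguments the $\nabla$-bracket collapses to the Lie bracket and the expression reproduces the ordinary Nijenhuis tensor, giving $N_{\hat J}^{\nabla}(X,Y)=N_J(X,Y)=0$. On two covector arguments the $\nabla$-bracket vanishes identically, so $N_{\hat J}^{\nabla}(h(X),h(Y))=0$ with no hypotheses at all. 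The only substantive case is the mixed one: expanding $N_{\hat J}^{\nabla}(X,\beta)$ produces the four covector terms $-\nabla_{JX}(J^*\beta)+J^*\nabla_{JX}\beta-J^*\nabla_X(J^*\beta)+(J^*)^2\nabla_X\beta$, and I would then push $J^*$ through every covariant derivative using $\nabla J^*=0$; the first two terms cancel and the last two cancel, yielding $0$.

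For the identity $\hat\nabla^*\hat J=0$ I would compute $(\hat\nabla^*_{X+\eta}\hat J)(Y+\beta)$ directly from the formula $\hat\nabla^*_{X+\eta}(Z+\gamma)=h^{-1}(\nabla_X(h(Z)))+\nabla_X\gamma$ of~(\ref{22}). Its covector part is $-\nabla_X(J^*\beta)+J^*(\nabla_X\beta)$, which is $0$ by $\nabla J^*=0$. Its tangent part is $h^{-1}(\nabla_X(h(JY)))-J(h^{-1}(\nabla_X(h(Y))))$; here I would first replace $h(JY)$ by $J^*(h(Y))$ using $h$-symmetry, then pull $J^*$ out of the derivative by $\nabla J^*=0$, and finally convert $h^{-1}J^*$ into $Jh^{-1}$ by $h$-symmetry again, after which the two summands coincide and cancel.

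The computations are routine once the two structural identities are in hand, so the thing to get right is the bookkeeping. The delicate point is tracking the sign contributed by the lower-right block $-J^*$: it is precisely this sign that distinguishes the present basic structure from the product structures with lower-right block $J_1^*J_2^*$ treated just above, so one cannot simply quote the earlier proposition. Equally, it is the parallelism $\nabla J=0$ (hence $\nabla J^*=0$) that forces the mixed Nijenhuis term and the tangent part of $\hat\nabla^*\hat J$ to cancel rather than leaving a residual curvature-type obstruction; dropping that hypothesis would break both cancellations.
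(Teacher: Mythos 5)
Your proof is correct and takes essentially the same route as the paper: the paper likewise evaluates $N^{\nabla}_{\hat J}$ on the three generating pairs, obtaining $N^{\nabla}_{\hat J}(X,Y)=N_J(X,Y)$, $N^{\nabla}_{\hat J}(h(X),h(Y))=0$, and the mixed term $-(\nabla_{JX}J^*)(h(Y))-J^*\bigl((\nabla_{X}J^*)(h(Y))\bigr)$, which is exactly your four-term covector expression regrouped, and then disposes of $\hat\nabla^*\hat J=0$ as a direct computation. You have simply written out in full the cancellations (via $\nabla J=0\Rightarrow\nabla J^*=0$ and $hJ=J^*h$) that the paper leaves implicit.
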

\begin{proof} A direct computation gives the following:
$$N^\nabla_{\hat J}(X,Y)=N_J(X,Y), \ \ N^\nabla_{\hat J}(h(X),h(Y))=0,$$$$N^\nabla_{\hat J}(X,h(Y))=-(\nabla_{JX}J^*)(h(Y))-J^*((\nabla_{X}J^*)(h(Y))),$$
for any $X,Y\in \Gamma^\infty(TM)$.
Moreover, a direct computation gives $\hat \nabla^* \hat J=0$.
\end{proof}

\section{Examples of generalized almost complex-product and generalized almost hyperproduct structures}

In order to give examples of all type of generalized triple structures, in this section, we shall describe generalized almost complex-product and generalized almost hyperproduct structures. They are obtained by  two almost complex, respectively two almost product, structures, $J_1$ and $J_2$, which commute.
Concerning them, we have the next two results.

\begin{proposition}
Let $J_1$ and $J_2$ be two commuting almost complex structures on $M$ and let $\hat{J}_1:=\left(
                    \begin{array}{cc}
                      J_1 & 0 \\
                      0 & -J_1^* \\
                    \end{array}
                  \right)
$ and $\hat{J}_2:=\left(
                    \begin{array}{cc}
                      J_2 & 0 \\
                      0 & -J_2^* \\
                    \end{array}
                  \right)
$. Then $(\hat{J}_1,\hat{J}_2,\hat{J}_1\hat{J}_2)$ is a generalized almost complex-product structure on $M$. Moreover, if $\nabla$ is an affine connection on $M$, then  $\hat{J}_1$ and $\hat J_2$ are $\hat \nabla$-parallel if and only if ${J}_1$ and $ J_2$ are $\nabla$-parallel.
\end{proposition}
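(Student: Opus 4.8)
The plan is to establish the two assertions in turn: the triple relations by a direct block computation, and the parallelism equivalence by a computation modelled on the proof of Proposition \ref{kal}.

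For the first assertion I would argue algebraically. Each $\hat J_i$ is block-diagonal with blocks $J_i$ and $-J_i^*$, so $\hat J_i^2$ is block-diagonal with blocks $J_i^2$ and $(J_i^*)^2$. From $J_i^2=-I$ one gets $(J_i^*)^2=(J_i^2)^*=-I$, hence $\hat J_i^2=-I$ and both $\hat J_1,\hat J_2$ are generalized almost complex structures. Next, the commutation $J_1J_2=J_2J_1$ gives $J_1^*J_2^*=(J_2J_1)^*=(J_1J_2)^*=J_2^*J_1^*$, so the two diagonal blocks of $\hat J_1\hat J_2$ and of $\hat J_2\hat J_1$ agree and $\hat J_1\hat J_2=\hat J_2\hat J_1$. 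Finally $(\hat J_1\hat J_2)^2=\hat J_1^2\hat J_2^2=I$, so $\hat J_1\hat J_2$ is a generalized almost product structure commuting with $\hat J_1$; reading the commuting pair $(\hat J_1,\hat J_1\hat J_2)$ of a generalized almost complex and a generalized almost product structure through Definition (iii) exhibits $(\hat J_1,\hat J_2,\hat J_1\hat J_2)$ as a generalized almost complex-product structure.

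For the second assertion I would compute $(\hat\nabla_{X+\eta}\hat J_i)(Y+\beta)$ directly from the definition of $\hat\nabla$, for arbitrary $X,Y\in\Gamma^\infty(TM)$ and $\eta,\beta\in\Gamma^\infty(T^*M)$. Since $\hat J_i(Y+\beta)=J_iY-J_i^*\beta$, expanding both terms produces a tangent component $\nabla_X(J_iY)-J_i(\nabla_XY)=(\nabla_XJ_i)(Y)$ and a cotangent component $-h(\nabla_X(h^{-1}(J_i^*\beta)))+J_i^*(h(\nabla_X(h^{-1}(\beta))))$. Using the $h$-symmetry of $J_i$ in the forms $h^{-1}J_i^*=J_ih^{-1}$ and $J_i^*h=hJ_i$, the expression $h^{-1}(J_i^*\beta)$ becomes $J_i(h^{-1}(\beta))$, the Leibniz rule then yields a term $h(J_i(\nabla_X(h^{-1}(\beta))))$ that cancels against $J_i^*(h(\nabla_X(h^{-1}(\beta))))$, and the cotangent component collapses to $-h((\nabla_XJ_i)(h^{-1}(\beta)))$. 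Thus
\[
(\hat\nabla_{X+\eta}\hat J_i)(Y+\beta)=(\nabla_XJ_i)(Y)-h\big((\nabla_XJ_i)(h^{-1}(\beta))\big),
\]
so, $h^{-1}$ being an isomorphism, $\hat\nabla\hat J_i=0$ is equivalent to $\nabla J_i=0$. Applying this for $i=1,2$ gives the stated equivalence.

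The step I expect to be the main obstacle is the simplification of the cotangent component: one must push $J_i^*$ through the covariant derivative using the $h$-symmetry identities so that the two mixed Leibniz terms cancel, isolating the single $(\nabla_XJ_i)$-term. The tangent part and the algebraic verification of the first assertion are routine.
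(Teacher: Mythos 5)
Your proof follows the same route as the paper's on both halves: the first assertion is the identical block algebra ($\hat J_i^2=-I$, $\hat J_1\hat J_2=\hat J_2\hat J_1$ via $J_1^*J_2^*=J_2^*J_1^*$, and $(\hat J_1\hat J_2)^2=\hat J_1^2\hat J_2^2=I$), and the second is the direct expansion of $(\hat\nabla_{X+\eta}\hat J_i)(Y+\beta)$, which the paper records in one line as $(\nabla_XJ_i)Y-(\nabla_XJ_i^*)\beta$ and which you carry out in detail.

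One point deserves attention in your treatment of the cotangent component. Your cancellation invokes the $h$-symmetry of $J_i$ (through $h^{-1}J_i^*=J_ih^{-1}$ and $J_i^*h=hJ_i$), which is not among the hypotheses of this proposition: the statement only asks that $J_1,J_2$ be commuting almost complex structures, and $h$ enters solely through the definition of $\hat\nabla$. Without $h$-symmetry the step fails; since the cotangent part of $\hat\nabla$ is $h\circ\nabla_X\circ h^{-1}$ rather than the dual connection, the general formula is $(\hat\nabla_{X+\eta}\hat J_i)(Y+\beta)=(\nabla_XJ_i)Y-h\big((\nabla_XK_i)(h^{-1}\beta)\big)$ with $K_i:=h^{-1}J_i^*h$ the $h$-adjoint of $J_i$, and when $\nabla h\neq 0$ the condition $\nabla J_i=0$ does not by itself force $\nabla K_i=0$, so the ``if'' direction of the equivalence is genuinely at risk. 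However, the paper's own one-line formula has exactly the same dependence: read with the honest dual connection on $T^*M$ it is off by $(\nabla_Xh)$-terms, and it becomes valid precisely under the identification $K_i=J_i$, i.e., $h$-symmetry. So both proofs tacitly operate under the paper's standing convention that the $(1,1)$-tensors paired with $h$ are $h$-symmetric, and granting that convention your computation is correct --- indeed your expression $-h\big((\nabla_XJ_i)(h^{-1}\beta)\big)$ is the accurate one, the paper's $-(\nabla_XJ_i^*)\beta$ agreeing with it only when $\nabla_XJ_i$ is itself $h$-symmetric; for the stated equivalence this discrepancy is immaterial, since either expression vanishes for all $\beta$ exactly when $\nabla J_i=0$. (A cosmetic remark: applying Definition (iii) literally to the pair $(\hat J_1,\hat J_1\hat J_2)$ produces the triple with third member $\hat J_1(\hat J_1\hat J_2)=-\hat J_2$, so the labeling is slightly off, but the paper itself verifies exactly the relations you do and treats them as sufficient.)
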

\begin{proof}
We have $\hat J_1\hat J_2=\hat J_2\hat J_1$ and $\hat J_1^2=-I$, $\hat J_2^2=-I$, $(\hat{J}_1\hat{J}_2)^2=\hat{J}_1^2\hat{J}_2^2=I$,
hence the first conclusion. Moreover, for any $X+\eta, Y+\beta \in \Gamma^\infty(TM \oplus T^*M)$, for $i=1,2$, we get $(\hat \nabla_{X+\eta}\hat J_i)(Y+\beta)=(\nabla_XJ_i)Y-(\nabla_XJ_i^*)\beta$ and the proof is complete.
\end{proof}

\begin{proposition}
Let $J_1$ and $J_2$ be two commuting almost product structures on $M$ and let $\hat{J}_1:=\left(
                    \begin{array}{cc}
                      J_1 & 0 \\
                      0 & -J_1^* \\
                    \end{array}
                  \right)
$ and $\hat{J}_2:=\left(
                    \begin{array}{cc}
                      J_2 & 0 \\
                      0 & -J_2^* \\
                    \end{array}
                  \right)
$. Then $(\hat{J}_1,\hat{J}_2,\hat{J}_1\hat{J}_2)$ is a generalized almost hyperproduct structure on $M$. Moreover, if $\nabla$ is an affine connection on $M$, then $\hat{J}_1$ and $\hat J_2$ are $\hat \nabla$-parallel if and only if ${J}_1$ and $ J_2$ are $\nabla$-parallel.
\end{proposition}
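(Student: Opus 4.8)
The plan is to treat the two assertions separately: the algebraic claim that $(\hat J_1,\hat J_2,\hat J_1\hat J_2)$ is a generalized almost hyperproduct structure, which needs only $J_i^2=I$ and $J_1J_2=J_2J_1$; and the parallelism claim, which I would reduce essentially verbatim to the computation already performed in the preceding proposition. For the algebraic part I argue directly from the block form
\[
\hat J_i=\left(\begin{array}{cc} J_i & 0 \\ 0 & -J_i^* \end{array}\right),\qquad i=1,2 .
\]
Squaring gives
\[
\hat J_i^2=\left(\begin{array}{cc} J_i^2 & 0 \\ 0 & (J_i^*)^2 \end{array}\right),
\]
and since $J_i^2=I$ forces $(J_i^*)^2=(J_i^2)^*=I$, each $\hat J_i$ is a generalized almost product structure. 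From $J_1J_2=J_2J_1$ I read off $J_1^*J_2^*=(J_2J_1)^*=(J_1J_2)^*=J_2^*J_1^*$, so the two block-diagonal operators commute, $\hat J_1\hat J_2=\hat J_2\hat J_1$. Commutativity then yields $(\hat J_1\hat J_2)^2=\hat J_1^2\hat J_2^2=I$, so $\hat J_3:=\hat J_1\hat J_2$ is a third generalized almost product structure; together with $\hat J_1\hat J_2=\hat J_2\hat J_1$ this is exactly the definition of a generalized almost hyperproduct structure. This is the only place where the hypothesis $J_i^2=I$ enters, and it is precisely what turns the complex-product conclusion of the previous proposition into a hyperproduct one.

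For the \emph{moreover} part I observe that the expression for $\hat\nabla\hat J_i$ does not depend on the sign of $J_i^2$, so the computation is word-for-word the one already carried out in the preceding proposition. Expanding
\[
(\hat\nabla_{X+\eta}\hat J_i)(Y+\beta)=\hat\nabla_{X+\eta}\big(J_iY-J_i^*\beta\big)-\hat J_i\big(\nabla_XY+h(\nabla_X(h^{-1}(\beta)))\big)
\]
with the definition of $\hat\nabla$, the $TM$-component collapses at once to $(\nabla_XJ_i)Y$, while the $T^*M$-component reorganizes into $-(\nabla_XJ_i^*)\beta$, so that $(\hat\nabla_{X+\eta}\hat J_i)(Y+\beta)=(\nabla_XJ_i)Y-(\nabla_XJ_i^*)\beta$. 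Since the two components are independent, $\hat\nabla\hat J_i=0$ is equivalent to $\nabla J_i=0$ together with $\nabla J_i^*=0$; and the identity $((\nabla_XJ_i^*)\beta)(Y)=\beta((\nabla_XJ_i)Y)$, used repeatedly throughout the paper, shows these two conditions are equivalent. Hence $\hat J_1$ and $\hat J_2$ are $\hat\nabla$-parallel if and only if $J_1$ and $J_2$ are $\nabla$-parallel, completing the proof.

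The only genuinely delicate step is the $T^*M$-component of the displayed formula. One must recognize that $\hat\nabla$ acts on one-forms through the map $\beta\mapsto h(\nabla_X(h^{-1}(\beta)))$, and then use the $h$-compatibility relations $J_i^*h=hJ_i$ and $h^{-1}J_i^*=J_ih^{-1}$ to collapse the two surviving terms $-h(\nabla_X(h^{-1}(J_i^*\beta)))$ and $J_i^*(h(\nabla_X(h^{-1}(\beta))))$ into the single covariant derivative $-(\nabla_XJ_i^*)\beta$; this is also the step that ties the cotangent condition back to $\nabla J_i=0$. Everything else is routine block-matrix bookkeeping identical to the almost-complex case settled immediately above, so no new obstacle arises beyond this reorganization.
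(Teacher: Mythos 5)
Your proposal is correct and follows essentially the same route as the paper: the block-diagonal algebra ($\hat J_i^2=I$, commutativity of $\hat J_1$ and $\hat J_2$ from $(J_1J_2)^*=J_2^*J_1^*$, and $(\hat J_1\hat J_2)^2=\hat J_1^2\hat J_2^2=I$) for the hyperproduct claim, and for the parallelism claim the very computation $(\hat\nabla_{X+\eta}\hat J_i)(Y+\beta)=(\nabla_XJ_i)Y-(\nabla_XJ_i^*)\beta$ combined with the duality identity $((\nabla_XJ_i^*)\beta)(Y)=\beta((\nabla_XJ_i)Y)$, which is precisely the ``direct computation, as in previous proposition'' that the paper invokes. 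The only caveat, which applies equally to the paper's own formula, is that your collapse of the $T^*M$-component uses the relations $J_i^*h=hJ_i$ and $h^{-1}J_i^*=J_ih^{-1}$, i.e.\ tacitly assumes the $J_i$ are $h$-symmetric, a hypothesis not written in the statement of the proposition but implicitly carried over from the paper's standing framework.
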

\begin{proof}
We have $\hat J_1\hat J_2=\hat J_2\hat J_1$ and $\hat J_1^2=I$, $\hat J_2^2=I$, $(\hat{J}_1\hat{J}_2)^2=\hat{J}_1^2\hat{J}_2^2=I$,
hence the first conclusion. A direct computation, as in previous proposition, gives the second statement.
\end{proof}

\section{Generalized affine connection parallelizing the \\ structures}

We recall the following \cite{blanan}:

\begin{definition} We call $D:\Gamma ^\infty(TM\oplus T^*M) \times \Gamma ^\infty(TM\oplus T^*M) \rightarrow \Gamma ^\infty(TM\oplus T^*M)$ an \textit{affine connection on} $TM\oplus T^*M$, or a \textit{generalized affine connection on} $M$, if it is $\mathbb R$-bilinear and for any $f\in C^\infty (M)$ and any $\sigma, \tau \in \Gamma ^\infty(TM\oplus T^*M)$, we have
\pagebreak
$$D_{f\sigma}\tau=fD_\sigma \tau,$$
$$D_\sigma(f\tau)=\sigma(f)\tau+fD_\sigma \tau,$$
where $(X+\eta)(f):=X(f)$ for $X+\eta \in \Gamma ^\infty(TM\oplus T^*M)$.
\end{definition}

In this section, first we construct an affine connection on $TM \oplus T^*M$ parallelizing a generalized almost para-quaternionic structure, extending results of \cite{ES} to our setting, then we define the generalized Obata connection parallelizing a generalized almost quaternionic structure.

\subsection{The canonical connection of a generalized para-quaternionic structure}

Let $M$ be an $n$-dimensional smooth manifold, let $\hat J_1$ and $\hat J_2$ be two anticommuting generalized almost product structures and let $(\hat J_1, \hat J_2, \hat J=\hat J_1 \hat J_2)$ be the associated generalized almost para-quaternionic structure. Let $\{X_1,\dots,X_n\}$ be a local frame for $TM$ and let $\{\alpha_1,\dots,\alpha_n\}$ be a local frame for $T^*M$. Then $\{\sigma_{ij}:=X_i+\alpha_j\}_{i,j=1,\dots,n}$ is a local frame for $TM\oplus T^*M$. Let us denote by $V_\pm (\hat J_k)$ the $\pm 1$-eigenbundle of $\hat J_k$ for $k=1,2$. We immediately get the following:
$$\eta_{ij}:=\sigma_{ij}+\hat J_1 \sigma_{ij} \in V_+(\hat J_1),$$
$$\hat J_2\eta_{ij}=\hat J_2(\sigma_{ij}+\hat J_1 \sigma_{ij}) \in V_-(\hat J_1),$$
$$\eta_{ij}+\hat J_2 \eta_{ij}=\sigma_{ij}+\hat J_1 \sigma_{ij} +\hat J_2(\sigma_{ij}+\hat J_1 \sigma_{ij}) \in V_+(\hat J_2),$$
$$\eta_{ij}-\hat J_2 \eta_{ij}=\sigma_{ij}+\hat J_1 \sigma_{ij} -\hat J_2(\sigma_{ij}+\hat J_1 \sigma_{ij}) \in V_-(\hat J_2).$$
Moreover, we denote $V_1:=V_+(\hat J_1)$, $V_2:=V_-(\hat J_1)$ and $V_3:=V_+(\hat J_2)$ and we remark that $$TM \oplus T^*M=V_1 \oplus V_2.$$

We generalize as follows the results about the canonical connection of \cite{ES}.

\begin{lemma}\label{1005} Let $(\hat J_1, \hat J_2, \hat J)$ be a generalized almost para-quaternionic structure and let $D$ be an affine connection on $TM \oplus T^*M$. Then the following conditions are equivalent:

(i) $D_\sigma \tau_l\in \Gamma^\infty(V_l)$, for any $\sigma \in \Gamma ^\infty(TM\oplus T^*M)$ and any $\tau_l \in \Gamma ^\infty (V_l)$, $l=1,2,3$;

(ii) $D\hat J_1=D\hat J_2=0.$
\end{lemma}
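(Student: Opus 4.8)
The plan is to prove the equivalence by exploiting the standard correspondence between a generalized connection being parallel to an involution and its preserving the associated eigenbundle splitting. Throughout I will use the convention $(D_\sigma \hat J_k)\tau = D_\sigma(\hat J_k \tau) - \hat J_k(D_\sigma \tau)$ together with the structural relations $\hat J_1^2 = \hat J_2^2 = I$ and $\hat J_1 \hat J_2 = -\hat J_2 \hat J_1$.

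For the implication (ii) $\Rightarrow$ (i) I would argue that if $D\hat J_1 = D\hat J_2 = 0$ then each $D_\sigma$ commutes with $\hat J_1$ and $\hat J_2$, so it preserves their eigenbundles: for $\tau_1 \in \Gamma^\infty(V_1)$ one has $\hat J_1(D_\sigma \tau_1) = D_\sigma(\hat J_1 \tau_1) = D_\sigma \tau_1$, forcing $D_\sigma \tau_1 \in \Gamma^\infty(V_1)$, and the cases of $V_2$ (eigenvalue $-1$) and $V_3$ (using $\hat J_2$) are identical.

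The converse (i) $\Rightarrow$ (ii) is the substantial direction, and I would split it into two parts. First I would establish $D\hat J_1 = 0$: because $TM \oplus T^*M = V_1 \oplus V_2$, it suffices to evaluate $(D_\sigma \hat J_1)$ on sections of $V_1$ and $V_2$. On $V_1$ one computes $(D_\sigma \hat J_1)\tau_1 = D_\sigma \tau_1 - \hat J_1(D_\sigma \tau_1)$, which vanishes since the hypothesis places $D_\sigma \tau_1$ in $\Gamma^\infty(V_1)$; the $V_2$ computation is the same with the opposite sign. Hence $D\hat J_1 = 0$.

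The hard part, and the step I expect to be the main obstacle, is deducing $D\hat J_2 = 0$, because hypothesis (i) supplies invariance only of $V_1$, $V_2$, and $V_3$ and says nothing directly about $V_4 := V_-(\hat J_2)$. The key observation I will use is that anticommutativity makes $\hat J_1$ interchange the eigenbundles of $\hat J_2$: if $\hat J_2 \tau_3 = \tau_3$ then $\hat J_2(\hat J_1 \tau_3) = -\hat J_1 \hat J_2 \tau_3 = -\hat J_1 \tau_3$, so $\hat J_1$ restricts to an isomorphism $V_3 \to V_4$ that is its own inverse. Invoking the already-proven $D\hat J_1 = 0$, for $\tau_4 = \hat J_1 \tau_3 \in \Gamma^\infty(V_4)$ I obtain $D_\sigma \tau_4 = \hat J_1(D_\sigma \tau_3)$; since $D_\sigma \tau_3 \in \Gamma^\infty(V_3)$ by hypothesis, its image under $\hat J_1$ lies in $\Gamma^\infty(V_4)$, so $D$ preserves $V_4$ as well. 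With invariance of both eigenbundles of $\hat J_2$ in hand, repeating the eigenbundle computation on $TM \oplus T^*M = V_3 \oplus V_4$ gives $(D_\sigma \hat J_2)\tau = 0$ on each summand, hence $D\hat J_2 = 0$ and the equivalence follows.
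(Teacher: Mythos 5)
Your proof is correct, and it follows the same overall skeleton as the paper's (extremely terse) argument: read the parallelism conditions off the eigenbundle splittings. The genuine divergence is in the step $D\hat J_2=0$. You bootstrap: having proved $D\hat J_1=0$, you note that anticommutativity makes $\hat J_1$ a self-inverse isomorphism $V_3\to V_4:=V_-(\hat J_2)$, transfer the $D$-invariance of $V_3$ to $V_4$, and then annihilate $D_\sigma\hat J_2$ on the splitting $TM\oplus T^*M=V_3\oplus V_4$. The paper never needs invariance of $V_4$: its displayed identities $D_\sigma(\hat J_2\eta_{ij})=\hat J_2(D_\sigma\eta_{ij})+(D_\sigma\hat J_2)\eta_{ij}$ and $D_\sigma(\eta_{ij}+\hat J_2\eta_{ij})=(I+\hat J_2)(D_\sigma\eta_{ij})+(D_\sigma\hat J_2)\eta_{ij}$, combined with hypothesis (i) for $l=1,2$ and $l=1,3$ and with $\hat J_2V_1\subset V_2$, place $(D_\sigma\hat J_2)\eta_{ij}$ in $V_2\cap V_3$, which is zero precisely because the structures anticommute; vanishing on $V_1$ then propagates to $V_2=\hat J_2V_1$ via $(D_\sigma\hat J_2)\circ\hat J_2=-\hat J_2\circ(D_\sigma\hat J_2)$. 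Your route buys a frame-free, fully explicit argument phrased on arbitrary sections of the $V_l$ --- a real advantage, since the paper's generating family $\{\sigma_{ij}=X_i+\alpha_j\}$ does not in fact $C^\infty$-span $TM\oplus T^*M$ (every combination $\sum c_{ij}(X_i+\alpha_j)$ has equal total coefficient sums on the $TM$ and $T^*M$ parts) --- at the mild cost of using $D\hat J_1=0$ as an input to the $D\hat J_2$ step, whereas the paper's transversality trick $V_2\cap V_3=0$ handles $\hat J_2$ without that dependency, but leaves the entire deduction as ``immediate.''
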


\begin{proof} It follows immediately from the properties of $D$, of $V_l$ for $l=1,2,3$ and the following:
$$D_\sigma \eta_{ij}=D_\sigma \sigma_{ij}+\hat J_1(D_\sigma \sigma_{ij})+(D_\sigma \hat J_1)\sigma_{ij},$$
$$D_\sigma (\hat J_2 \eta_{ij})=\hat J_2(D_\sigma \eta_{ij})+(D_\sigma \hat J_2)\eta_{ij},$$
$$D_\sigma (\eta_{ij}+\hat J_2 \eta_{ij})=D_\sigma \eta_{ij}+\hat J_2(D_\sigma \eta_{ij})+(D_\sigma \hat J_2)\eta_{ij}.$$
\end{proof}

\begin{theorem} \label{hjp} Let $M$ be a smooth manifold, let $\nabla$ be an affine connection on $M$ and let $(\hat J_1, \hat J_2, \hat J)$ be a generalized almost para-quaternionic structure. Then there exists a unique affine connection $D$ on $TM\oplus T^*M$ with the following properties:

(i) $D\hat J_1=D\hat J_2=D\hat J=0;$

(ii) $T^D(\sigma,\tau)=0$, for any $\sigma \in \Gamma ^\infty (V_1)$ and any $\tau \in \Gamma ^\infty (V_2),$
where $T^D$ is the torsion tensor of $D$ defined by the bracket of the given connection $\nabla$ as:
$$T^D(\sigma,\tau):=D_\sigma \tau-D_\tau \sigma-[\sigma,\tau]_\nabla.$$

The connection $D$ will be called the \textit{canonical connection} of $(\hat J_1, \hat J_2, \nabla).$
\end{theorem}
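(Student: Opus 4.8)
The plan is to adapt the Etayo-type construction \cite{ES} to the generalized setting, organising everything around the eigenbundle decomposition $TM\oplus T^*M=V_1\oplus V_2$ together with the projectors $P_1=\frac12(I+\hat J_1)$ and $P_2=\frac12(I-\hat J_1)$. The first thing I would record is structural: since $\hat J_2$ anticommutes with $\hat J_1$ and squares to the identity, it restricts to mutually inverse isomorphisms $\hat J_2\colon V_1\to V_2$ and $\hat J_2\colon V_2\to V_1$, it satisfies $\hat J_2 P_1=P_2\hat J_2$, and the third eigenbundle is the graph $V_3=\{\rho+\hat J_2\rho:\rho\in\Gamma^\infty(V_1)\}$. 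By Lemma \ref{1005}, condition (i) is equivalent to asking that $D$ preserve each of $V_1,V_2,V_3$; moreover $D\hat J=0$ is automatic once $D\hat J_1=D\hat J_2=0$, since $\hat J=\hat J_1\hat J_2$ and $D$ acts as a derivation on endomorphisms. Thus I only have to build a connection preserving $V_1,V_2,V_3$ and satisfying the torsion condition (ii).

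I would establish uniqueness first, because the formulas it forces then serve as the definition in the existence part. For $\sigma_1\in\Gamma^\infty(V_1)$ and $\tau_2\in\Gamma^\infty(V_2)$, the vanishing of $T^D$ on $V_1\times V_2$ reads $D_{\sigma_1}\tau_2-D_{\tau_2}\sigma_1=[\sigma_1,\tau_2]_\nabla$; since $D$ preserves the summands, $D_{\sigma_1}\tau_2\in\Gamma^\infty(V_2)$ and $D_{\tau_2}\sigma_1\in\Gamma^\infty(V_1)$, so projecting with $P_1,P_2$ forces $D_{\sigma_1}\tau_2=P_2[\sigma_1,\tau_2]_\nabla$ and $D_{\tau_2}\sigma_1=-P_1[\sigma_1,\tau_2]_\nabla$; the same argument applied to a pair in $V_2\times V_1$ gives $D_{\sigma_2}\tau_1=P_1[\sigma_2,\tau_1]_\nabla$. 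Next, preservation of $V_3$ is equivalent to $D_\sigma(\hat J_2\rho)=\hat J_2(D_\sigma\rho)$; applying this with $\tau_1\in\Gamma^\infty(V_1)$ and $\tau_2=\hat J_2\tau_1$ turns the cross-term formula into $\hat J_2(D_{\sigma_1}\tau_1)=P_2[\sigma_1,\hat J_2\tau_1]_\nabla$, that is $D_{\sigma_1}\tau_1=\hat J_2 P_2[\sigma_1,\hat J_2\tau_1]_\nabla$, while the whole action of $D$ on $V_2$ is recovered from its action on $V_1$ via $D_\sigma\tau_2=\hat J_2\,D_\sigma(\hat J_2\tau_2)$. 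These expressions determine $D$ completely, proving uniqueness.

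For existence I would simply declare $D$ by the formulas above, extended additively in the first slot through $\sigma=P_1\sigma+P_2\sigma$, and verify the connection axioms. The only delicate point is that $[\cdot,\cdot]_\nabla$ is not tensorial: using $[f\sigma,\rho]_\nabla=f[\sigma,\rho]_\nabla-(\rho f)\sigma$ and $[\sigma,f\rho]_\nabla=f[\sigma,\rho]_\nabla+(\sigma f)\rho$, the spurious derivative terms in $D_{f\sigma}\tau$ come out proportional to $P_2\sigma_1$ or $P_1\sigma_2$ and hence vanish, giving $C^\infty(M)$-linearity in $\sigma$; the Leibniz rule in $\tau$ holds because the correction term lands, after applying $P_l$ and $\hat J_2$ (and using $\hat J_2^2=I$), exactly on $(\sigma f)\tau$. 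Preservation of $V_1$ and $V_2$ is visible from the formulas, $D\hat J_2=0$ follows from the defining relation $D_\sigma\tau_2=\hat J_2\,D_\sigma(\hat J_2\tau_2)$, and $D\hat J=0$ is then automatic. Finally, substituting the cross-term formulas into $T^D(\sigma_1,\tau_2)=D_{\sigma_1}\tau_2-D_{\tau_2}\sigma_1-[\sigma_1,\tau_2]_\nabla$ yields $P_2[\sigma_1,\tau_2]_\nabla+P_1[\sigma_1,\tau_2]_\nabla-[\sigma_1,\tau_2]_\nabla=0$, which is (ii).

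The step I expect to be the real obstacle is the internal consistency of this piecewise definition: the torsion condition fixes $D$ on the cross pairs, while $D\hat J_2=0$ is used both to transport $D$ from $V_1$ to $V_2$ and to pin down the diagonal term $D_{\sigma_1}\tau_1$, so I must check that these uses do not clash — concretely, that computing $D_{\sigma_1}\tau_2$ directly from the torsion condition agrees with computing it through $\hat J_2\,D_{\sigma_1}(\hat J_2\tau_2)$. This reduces to the identities $\hat J_2^2=I$ and $\hat J_2 P_1=P_2\hat J_2$, so it is ultimately a bookkeeping verification; once it is settled, the tensoriality and Leibniz checks are routine, though they are precisely the place where the non-tensoriality of $[\cdot,\cdot]_\nabla$ must be tracked with care.
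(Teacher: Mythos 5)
Your proposal is correct and takes essentially the same route as the paper's proof: uniqueness is derived first by combining the torsion condition on $V_1\times V_2$ with Lemma \ref{1005}, forcing exactly the paper's component formulas (your $P_1,P_2$ are the paper's $\hat J_1^+,\hat J_1^-$, your cross terms $D_{\sigma_1}\tau_2=P_2[\sigma_1,\tau_2]_\nabla$ and diagonal term $D_{\sigma_1}\tau_1=\hat J_2 P_2[\sigma_1,\hat J_2\tau_1]_\nabla$ match $D_{\sigma^+}\tau^-=([\sigma^+,\tau^-]_\nabla)^-$ and $D_{\sigma^+}\tau^+=\hat J_2\hat J_1^-[\sigma^+,\hat J_2\tau^+]_\nabla$), and existence follows by adopting these formulas as the definition. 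The only difference is expository: you make explicit the tensoriality/Leibniz verifications and the internal-consistency check (which, as you say, reduces to $\hat J_2^2=I$ and $\hat J_2P_1=P_2\hat J_2$) that the paper compresses into ``we can easily verify that $D$ is an affine connection.''
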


\begin{proof} Let us suppose such $D$ exists, let $\hat J_1^+, \hat J_1^-$ be the canonical projections on sections of $V_1$ and $V_2$, respectively defined by:
$$2\hat J_1^+\sigma:=2\sigma^+=\sigma +\hat J_1 \sigma, \, \, 2\hat J_1^-\sigma:=2\sigma^-=\sigma -\hat J_1 \sigma, \, \sigma\in\Gamma ^\infty(TM\oplus T^*M).$$

Then, for any $\sigma \in \Gamma ^\infty (V_1)$ and any $\tau \in \Gamma ^\infty (V_2)$, we have
$$0=T^D(\sigma,\tau)=D_\sigma \tau-D_\tau \sigma-\hat J_1^+[\sigma,\tau]_\nabla-\hat J_1^-[\sigma,\tau]_\nabla.$$

By using Lemma \ref{1005} we get
$$D_\sigma \tau=\hat J_1^-[\sigma,\tau]_\nabla, \, \, D_\tau \sigma=\hat J_1^+[\tau,\sigma]_\nabla.$$

It is easy to verify that these conditions characterize $D$. Indeed, if $\sigma, \tau \in \Gamma ^\infty(TM\oplus T^*M)$, we decompose $\sigma=\sigma^+ +\sigma^-$ and $\tau=\tau^+ +\tau^-$ and we get
$$D_\sigma \tau=D_{\sigma^+}\tau^+ +  D_{\sigma^+}\tau^- + D_{\sigma^-}\tau^+ + D_{\sigma^-}\tau^-.$$

Remark that
$$D_{\sigma^+}\tau^-=\hat J_1^-([\sigma^+,\tau^-]_\nabla)=([\sigma^+,\tau^-]_\nabla)^-,$$
$$D_{\sigma^-}\tau^+=\hat J_1^+([\sigma^-,\tau^+]_\nabla)=([\sigma^-,\tau^+]_\nabla)^+,$$
\pagebreak
$$D_{\sigma^+}\tau^+=D_{\sigma^+}(\hat J_2^2 (\hat J_1^+ \tau))=\hat J_2(D_{\sigma^+}(\hat J_2 (\hat J_1^+ \tau)))=\hat J_2(D_{\sigma^+}(\hat J_2 (\hat J_1^+ \tau)))^-=$$$$=\hat J_2 \hat J_1^- [\sigma^+,\hat J_2 (\hat J_1^+ \tau)]_\nabla,$$
$$D_{\sigma^-}\tau^-=D_{\sigma^-}(\hat J_2^2 (\hat J_1^- \tau))=\hat J_2(D_{\sigma^-}(\hat J_2 (\hat J_1^- \tau)))=\hat J_2(D_{\sigma^-}(\hat J_2 (\hat J_1^- \tau)))^+=$$$$=\hat J_2 \hat J_1^+[\sigma^-,\hat J_2 (\hat J_1^- \tau)]_\nabla.$$

Then, by using the fact that $\hat J_2 \hat J_1^-=\hat J_1^+ \hat J_2$ and $\hat J_2 \hat J_1^+=\hat J_1^- \hat J_2$, we get
$$D_\sigma \tau=\hat J_1^+\Big([\sigma^-,\tau^+]_\nabla+\hat J_2 [\sigma^+,\hat J_2 \tau^+]_\nabla\Big)+
\hat J_1^-\Big([\sigma^+,\tau^-]_\nabla+\hat J_2 [\sigma^-,\hat J_2 \tau^-]_\nabla\Big)=$$
$$= \Big([\sigma^-,\tau^+]_\nabla+\hat J_2 [\sigma^+,\hat J_2 \tau^+]_\nabla\Big)^+ + \Big([\sigma^+,\tau^-]_\nabla+\hat J_2 [\sigma^-,\hat J_2 \tau^-]_\nabla\Big)^-.$$
Hence, conditions (i) and (ii) uniquely define the connection $D$.

Now we prove that such $D$ satisfies the two conditions. We can easily verify that $D$ is an affine connection on $TM \oplus T^*M$ and that  (i) holds. Moreover, remark that for any $\eta \in \Gamma^\infty (TM \oplus T^*M)$, we have
$$(\hat J_1 \eta)^+=\hat J_1 \eta^+=\eta^+, \, (\hat J_1 \eta)^-=\hat J_1 \eta^-=-\eta^-,$$
$$(\hat J_2 \eta)^-=\hat J_2 \eta^+, \, (\hat J_2\eta)^+=\hat J_2 \eta^-.$$

Then, for any $\sigma, \tau \in \Gamma^\infty (TM \oplus T^*M)$, we have
$$D_\sigma (\hat J_1 \tau)=\Big([\sigma^-,(\hat J_1\tau)^+]_\nabla+\hat J_2 [\sigma^+,\hat J_2 (\hat J_1\tau)^+]_\nabla\Big)^+ +
 \Big([\sigma^+,(\hat J_1\tau)^-]_\nabla+\hat J_2 [\sigma^-,\hat J_2 (\hat J_1\tau)^-]_\nabla\Big)^-=$$
$$=\Big([\sigma^-,\tau^+]_\nabla+\hat J_2 [\sigma^+,\hat J_2 \tau^+]_\nabla\Big)^+ + \Big([\sigma^+,-\tau^-]_\nabla+\hat J_2 [\sigma^-,-\hat J_2 \tau^-]_\nabla\Big)^-=$$
$$=\hat J_1\Big(\big([\sigma^-,\tau^+]_\nabla+\hat J_2 [\sigma^+,\hat J_2 \tau^+]_\nabla\big)^+ + \big([\sigma^+,\tau^-]_\nabla+\hat J_2 [\sigma^-,\hat J_2 \tau^-]_\nabla\big)^-\Big)=\hat J_1 (D_\sigma \tau),$$
thus, $D\hat J_1=0$, and a similar computation gives $D\hat J_2=0$, so the proof is complete.
\end{proof}

As an application, we immediately obtain the following.

\begin{proposition} Let $(M,h,\nabla)$ be a quasi-statistical manifold and let $J$ be a $h$-symmetric, $\nabla$-parallel $(1,1)$-tensor field on $M$ such that the Nijenhuis tensor of $J$, $N_J$, vanishes. Let $(\hat J_-,\hat J_+, \hat J)$ be the generalized para-quaternionic structure defined by $\hat J_\mp$ and $\hat J$ given by (\ref{op1}) and (\ref{jj}). Then the canonical connection is the dual connection $\hat \nabla^* $ given by (\ref{22}).
\end{proposition}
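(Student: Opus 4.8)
The plan is to verify that $\hat\nabla^*$ satisfies the two conditions which, by Theorem \ref{hjp}, uniquely characterize the canonical connection of the para-quaternionic structure $(\hat J_-,\hat J_+,\hat J)$, and then to conclude by that uniqueness. To fit the setting of Theorem \ref{hjp}, I would first single out the two anticommuting generalized almost product structures inside the triple: these are $\hat J_+$ (with $\hat J_+^2=I$) and $\hat J=\hat J_-\hat J_+$ (with $\hat J^2=I$ by (\ref{jj})), while $\hat J_-$ is the generalized almost complex structure. One checks $\hat J\hat J_+=\hat J_-$ and $\hat J_+\hat J=-\hat J_-$, so $\hat J_+$ and $\hat J$ indeed anticommute and play the roles of $\hat J_1,\hat J_2$ in the theorem, with their product being the complex structure $\hat J_-$. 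The canonical connection $D$ is then the unique generalized affine connection satisfying (i) $D\hat J_-=D\hat J_+=D\hat J=0$ and (ii) $T^D(\sigma,\tau)=0$ for $\sigma\in\Gamma^\infty(V_1)$, $\tau\in\Gamma^\infty(V_2)$, the torsion being taken with respect to $[\cdot,\cdot]_\nabla$.

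Next I would observe that, under the present hypotheses, both defining conditions have essentially been established already in Corollary \ref{th}, which asserts that $\hat\nabla^*$ is a torsion-free generalized affine connection parallelizing $(\hat J_-,\hat J_+,\hat J)$. The parallelizing statement gives condition (i): the vanishing $\hat\nabla^*\hat J_-=\hat\nabla^*\hat J_+=0$ is recorded there, while $\hat\nabla^*\hat J=0$ also follows independently from Proposition \ref{mm3}, since $\nabla J=0$. The torsion-free statement gives condition (ii): the torsion $T^{\hat\nabla^*}$ computed after Proposition \ref{p2} with respect to $[\cdot,\cdot]_\nabla$ equals $h^{-1}\big((\nabla_X h)Y-(\nabla_Y h)X+h(T^\nabla(X,Y))\big)$, which vanishes identically precisely because $(M,h,\nabla)$ is a quasi-statistical manifold; vanishing everywhere in particular forces vanishing on $V_1\times V_2$.

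With both conditions matched, the identity $D=\hat\nabla^*$ follows at once from the uniqueness part of Theorem \ref{hjp}. The only point I would spell out with care is the compatibility of conventions: one must confirm that the torsion $T^D$ appearing in Theorem \ref{hjp} is exactly the $[\cdot,\cdot]_\nabla$-torsion whose vanishing was recorded for $\hat\nabla^*$, and that the choice of which two product structures serve as $\hat J_1,\hat J_2$ does not affect the outcome. Since $\hat\nabla^*$ is torsion-free in the full sense and parallelizes all three structures simultaneously, these choices are harmless, and I expect no computation beyond what already appears in the excerpt to be required.
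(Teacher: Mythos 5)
Your proposal is correct and follows exactly the paper's argument: the paper proves this proposition in one line by citing Corollary \ref{th} (which supplies both the parallelization $\hat\nabla^*\hat J_-=\hat\nabla^*\hat J_+=\hat\nabla^*\hat J=0$ and the full torsion-freeness of $\hat\nabla^*$ coming from the quasi-statistical condition $d^\nabla h=0$) and then invokes the uniqueness in Theorem \ref{hjp}. Your additional care in identifying $\hat J_+$ and $\hat J$ as the two anticommuting product structures playing the roles of $\hat J_1,\hat J_2$, and in checking that the torsion conventions agree with the $[\cdot,\cdot]_\nabla$-bracket, only makes explicit what the paper leaves implicit.
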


\begin{proof} It follows immediately from Corollary \ref{th}.
\end{proof}

Let us consider the Fr\"olicher-Nijenhuis bracket of $\hat J_1$ and $\hat J_2$ defined by $\nabla$, for \linebreak $\sigma, \tau \in \Gamma ^\infty(TM\oplus T^*M)$, as:
$$[\hat J_1,\hat J_2]_\nabla(\sigma,\tau):=[\hat J_1 \sigma,\hat J_2 \tau]_\nabla+[\hat J_2 \sigma,\hat J_1\tau]_\nabla+\hat J_1\hat J_2[\sigma,\tau]_\nabla+$$
$$+\hat J_2\hat J_1[\sigma,\tau]_\nabla-\hat J_1[\hat J_2 \sigma,\tau]_\nabla-\hat J_1[\sigma,\hat J_2 \tau]_\nabla-\hat J_2[\hat J_1 \sigma,\tau]_\nabla-\hat J_2[\sigma,\hat J_1\tau]_\nabla.$$
As $\hat J_1$ and $\hat J_2$ anticommute, we get
$$[\hat J_1,\hat J_2]_\nabla(\sigma,\tau)=[\hat J_1 \sigma,\hat J_2 \tau]_\nabla+[\hat J_2 \sigma ,\hat J_1 \tau]_\nabla-$$
$$-\hat J_1[\hat J_2 \sigma,\tau]_\nabla-\hat J_1[\sigma,\hat J_2 \tau]_\nabla-\hat J_2[\hat J_1 \sigma,\tau]_\nabla-\hat J_2[\sigma,\hat J_1\tau]_\nabla.$$

\begin{lemma}\label{I} Let $M$ be a smooth manifold and let $\nabla$ be an affine connection on $M$. Let $(\hat J_1, \hat J_2, \hat J)$ be a generalized almost para-quaternionic structure and let $D$ be the canonical connection of $(\hat J_1, \hat J_2, \nabla)$ on $TM\oplus T^*M$. Then the following assertions hold:

(i) $[\hat J_1,\hat J_2]_\nabla (\sigma, \tau)=2\hat J_2(T^D(\sigma,\tau))$, for any $\sigma, \tau \in \Gamma^\infty (V_1)$;

(ii) $[\hat J_1,\hat J_2]_\nabla (\sigma, \tau)=-2\hat J_2(T^D(\sigma,\tau))$, for any $\sigma, \tau \in \Gamma (V_2)$;

(iii) $[\hat J_1,\hat J_2]_\nabla (\sigma, \tau)=2\hat J_1^-[\sigma,\hat J_2\tau]_\nabla-2\hat J_1^+[\hat J_2 \sigma, \tau]_\nabla$, for any $\sigma \in \Gamma^\infty (V_1), \tau \in \Gamma^\infty (V_2).$
\end{lemma}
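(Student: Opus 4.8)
The plan is to verify the three identities by direct computation, in each case specializing both the anticommuting form of $[\hat J_1,\hat J_2]_\nabla$ recorded just above and the explicit formula for $D$ derived in the proof of Theorem \ref{hjp} to the eigenbundle to which $\sigma$ and $\tau$ belong. The ingredients I will use repeatedly are the eigenvalue conditions ($\hat J_1\sigma=\sigma$ on $V_1$, $\hat J_1\sigma=-\sigma$ on $V_2$), the antisymmetry of the $\nabla$-bracket, the identity $\hat J_2^2=I$, and the projection relations $\hat J_2(\xi^+)=(\hat J_2\xi)^-$ and $\hat J_2(\xi^-)=(\hat J_2\xi)^+$ established in that same proof.

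For part (i) I would first substitute $\hat J_1\sigma=\sigma$ and $\hat J_1\tau=\tau$ into $[\hat J_1,\hat J_2]_\nabla(\sigma,\tau)$, which collapses it to
$$[\hat J_1,\hat J_2]_\nabla(\sigma,\tau)=[\sigma,\hat J_2\tau]_\nabla+[\hat J_2\sigma,\tau]_\nabla-\hat J_1[\sigma,\hat J_2\tau]_\nabla-\hat J_1[\hat J_2\sigma,\tau]_\nabla-2\hat J_2[\sigma,\tau]_\nabla.$$
On $V_1$ the formula for $D$ reduces to $D_\sigma\tau=(\hat J_2[\sigma,\hat J_2\tau]_\nabla)^+$, so I form $T^D(\sigma,\tau)=D_\sigma\tau-D_\tau\sigma-[\sigma,\tau]_\nabla$ and apply $2\hat J_2$. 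Here the relation $\hat J_2(\xi^+)=(\hat J_2\xi)^-$ together with $\hat J_2^2=I$ turns each term $2\hat J_2(\hat J_2[\cdots]_\nabla)^+$ into $2([\cdots]_\nabla)^-$; expanding $2\xi^-=\xi-\hat J_1\xi$ and using the antisymmetry $[\tau,\hat J_2\sigma]_\nabla=-[\hat J_2\sigma,\tau]_\nabla$ then reproduces the displayed right-hand side term for term, yielding $[\hat J_1,\hat J_2]_\nabla(\sigma,\tau)=2\hat J_2(T^D(\sigma,\tau))$.

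Part (ii) runs along identical lines with the signs flipped: substituting $\hat J_1\sigma=-\sigma$, $\hat J_1\tau=-\tau$ specializes the bracket to the analogous five-term expression, on $V_2$ the connection reduces to $D_\sigma\tau=(\hat J_2[\sigma,\hat J_2\tau]_\nabla)^-$, and applying $-2\hat J_2$ while using $\hat J_2(\xi^-)=(\hat J_2\xi)^+$ and $2\xi^+=\xi+\hat J_1\xi$ delivers the sign $-2\hat J_2(T^D(\sigma,\tau))$. Part (iii) is the simplest and does not require $D$ at all: with $\hat J_1\sigma=\sigma$ and $\hat J_1\tau=-\tau$ the two $\hat J_2[\sigma,\tau]_\nabla$ terms cancel, leaving four terms, and expanding the two projections on the claimed right-hand side via $2\hat J_1^-\xi=\xi-\hat J_1\xi$ and $2\hat J_1^+\xi=\xi+\hat J_1\xi$ matches these four terms directly.

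Since $D$ is already given in closed form, no genuine difficulty remains; the calculations are mechanical. The one point that demands care is the bookkeeping of the $\pm$ superscripts when commuting $\hat J_2$ past the projections $\hat J_1^\pm$, since $\hat J_2$ interchanges the two eigenbundles $V_1$ and $V_2$ of $\hat J_1$. Keeping these sign swaps and the bracket's antisymmetry straight is essentially the entire content of the proof.
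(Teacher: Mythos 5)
Your computations are correct, and all three identities check out: on $V_1$ the closed formula from the proof of Theorem \ref{hjp} does reduce to $D_\sigma\tau=\bigl(\hat J_2[\sigma,\hat J_2\tau]_\nabla\bigr)^+$, on $V_2$ to $\bigl(\hat J_2[\sigma,\hat J_2\tau]_\nabla\bigr)^-$, and applying $\pm 2\hat J_2$ with the swap rules $\hat J_2\hat J_1^\pm=\hat J_1^\mp\hat J_2$, the antisymmetry of $[\cdot,\cdot]_\nabla$, and $2\hat J_1^\pm\xi=\xi\pm\hat J_1\xi$ reproduces the specialized Fr\"olicher--Nijenhuis bracket exactly. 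The overall strategy — specialize the anticommuting form of $[\hat J_1,\hat J_2]_\nabla$ to eigensections and compute — is the same as the paper's, but your handling of $D$ is genuinely different. The paper never substitutes the explicit formula for $D$; instead it runs the computation in the opposite direction, rewriting each $\nabla$-bracket as $D_\alpha\beta-D_\beta\alpha-T^D(\alpha,\beta)$ and then cancelling the $D$-terms using the abstract defining properties of the canonical connection ($D\hat J_2=0$, invariance of the eigenbundles via Lemma \ref{1005}, and the vanishing of the mixed torsion $T^D|_{\Gamma^\infty(V_1)\times\Gamma^\infty(V_2)}$), so that only the terms $\pm 2\hat J_2 T^D(\sigma,\tau)$ survive. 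Your route buys concreteness — everything is an explicit bracket manipulation with no appeal to the characterizing properties — while the paper's argument is more structural and would survive any other realization of a connection with the same properties. A genuine improvement in your write-up is part (iii): you correctly observe that it is a pure projection identity independent of $D$, whereas the paper's own derivation of (iii) detours through torsion terms that merely cancel; your observation makes transparent why the formula in (iii), unlike (i) and (ii), contains no $T^D$.
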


\begin{proof} (i) For $\sigma=\sigma^+, \tau=\tau^+ \in \Gamma^\infty (V_1)$, by using the properties of $D$,  we get
$$[\hat J_1,\hat J_2]_\nabla(\sigma^+,\tau^+)=[\sigma^+,\hat J_2 \tau^+]_\nabla+[\hat J_2 \sigma^+ ,\tau^+]_\nabla-\hat J_1[\hat J_2 \sigma^+,\tau^+]_\nabla-\hat J_1[\sigma^+,\hat J_2 \tau^+]_\nabla-$$
$$-\hat J_2[\sigma^+,\tau^+]_\nabla-\hat J_2[\sigma^+,\tau^+]_\nabla=-T^D(\sigma^+,\hat J_2 \tau^+)-T^D(\hat J_2 \sigma^+ ,\tau^+)+$$$$+\hat J_1 T^D(\hat J_2 \sigma^+,\tau^+)+\hat J_1 T^D(\sigma^+,\hat J_2\tau^+)+\hat J_2T^D(\sigma^+,\tau^+)+\hat J_2T^D(\sigma^+,\tau^+)=2\hat J_2 T^D(\sigma,\tau).$$

(ii) For $\sigma=\sigma^-, \tau=\tau^- \in \Gamma^\infty (V_2)$, by using the properties of $D$,  we get
$$[\hat J_1,\hat J_2]_\nabla(\sigma^-,\tau^-)=[-\sigma^-,\hat J_2 \tau^-]_\nabla+[\hat J_2 \sigma^- ,-\tau^-]_\nabla-\hat J_1[\hat J_2 \sigma^-,\tau^-]_\nabla-\hat J_1[\sigma^-,\hat J_2 \tau^-]_\nabla-$$
$$-\hat J_2[-\sigma^-,\tau^-]_\nabla-\hat J_2[\sigma^-,-\tau^-]_\nabla=T^D(\sigma^-,\hat J_2 \tau^-)+T^D(\hat J_2 \sigma^- ,\tau^-)+$$$$+\hat J_1 T^D(\hat J_2 \sigma^-,\tau^-)+\hat J_1 T^D(\sigma^-,\hat J_2\tau^-)-\hat J_2T^D(\sigma^-,\tau^-)-\hat J_2T^D(\sigma^-,\tau^-)=-2\hat J_2 T^D(\sigma,\tau).$$

(iii) Finally, for $\sigma=\sigma^+\in \Gamma^\infty (V_1), \tau=\tau^- \in \Gamma^\infty (V_2)$, by using the properties of $D$,  we get
$$[\hat J_1,\hat J_2]_\nabla(\sigma^+,\tau^-)=[\sigma^+,\hat J_2 \tau^-]_\nabla+[\hat J_2 \sigma^+ ,-\tau^-]_\nabla-\hat J_1[\hat J_2 \sigma^+,\tau^-]_\nabla-\hat J_1[\sigma^+,\hat J_2 \tau^-]_\nabla-$$
$$-\hat J_2[\sigma^+,\tau^-]_\nabla-\hat J_2[\sigma^+,-\tau^-]_\nabla=-T^D(\sigma^+,\hat J_2 \tau^-)+T^D(\hat J_2 \sigma^+ ,\tau^-)+$$$$+\hat J_1 T^D(\hat J_2 \sigma^+,\tau^-)+\hat J_1 T^D(\sigma^+,\hat J_2\tau^-)+\hat J_2T^D(\sigma^+,\tau^-)-\hat J_2T^D(\sigma^+,\tau^-)=$$
\pagebreak
$$=\hat J_1 T^D(\sigma^+,\hat J_2\tau^-)-\hat J_1T^D(\sigma^+,\hat J_2\tau^-)+\hat J_1 T^D(\hat J_2 \sigma^+,\tau^-)+\hat J_1T^D(\hat J_2 \sigma^+,\tau^-)=$$
$$=2\hat J_1^-[\sigma,\hat J_2\tau]_\nabla-2\hat J_1^+[\hat J_2 \sigma, \tau]_\nabla$$
and the proof is complete.
\end{proof}

\begin{theorem} Let $M$ be a smooth manifold and let $\nabla$ be an affine connection on $M$. Let $(\hat J_1, \hat J_2, \hat J)$ be a generalized almost para-quaternionic structure and let $D$ be the canonical connection of $(\hat J_1, \hat J_2, \nabla)$ on $TM\oplus T^*M$. Then the following conditions are equivalent:

(i) $[\hat J_1,\hat J_2]_\nabla =0$;

(ii) $D$ is torsion-free;

(iii) $\hat J_1$ and $\hat J_2$ are $\nabla$-integrable.

\end{theorem}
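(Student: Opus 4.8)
The plan is to prove the chain (i) $\Leftrightarrow$ (ii) $\Leftrightarrow$ (iii), using throughout the splitting $TM\oplus T^*M=V_1\oplus V_2$, the defining properties $D\hat J_1=D\hat J_2=0$ and $T^D(\sigma,\tau)=0$ for $\sigma\in\Gamma^\infty(V_1),\,\tau\in\Gamma^\infty(V_2)$, and the fact that $\hat J_2$ restricts to an isomorphism $V_1\to V_2$ (since it anticommutes with $\hat J_1$ and $\hat J_2^2=I$). For (i) $\Leftrightarrow$ (ii), I would first note that by skew-symmetry and bilinearity $T^D=0$ iff it vanishes on $V_1\times V_1$, on $V_2\times V_2$ and on $V_1\times V_2$, the last being automatic by the defining property of $D$. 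Lemma \ref{I}(i)--(ii) gives $[\hat J_1,\hat J_2]_\nabla=\pm 2\hat J_2 T^D$ on the two diagonal blocks, so invertibility of $\hat J_2$ makes vanishing of the bracket there equivalent to vanishing of $T^D$ there. It then remains to handle the mixed block in Lemma \ref{I}(iii): if $T^D=0$, then since $D$ preserves each $V_l$, the brackets $[\sigma^+,\hat J_2\tau^-]_\nabla$ and $[\hat J_2\sigma^+,\tau^-]_\nabla$ lie in $V_1$ and $V_2$ respectively, so the two projections in that formula die and $[\hat J_1,\hat J_2]_\nabla=0$; the converse is already contained in the diagonal-block equivalence. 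This settles (i) $\Leftrightarrow$ (ii).

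For (ii) $\Leftrightarrow$ (iii) I would first establish, for $k=1,2$, the pointwise identity
$$N_{\hat J_k}^\nabla(\sigma,\tau)=-\Big(T^D(\hat J_k\sigma,\hat J_k\tau)-\hat J_k T^D(\hat J_k\sigma,\tau)-\hat J_k T^D(\sigma,\hat J_k\tau)+\hat J_k^2 T^D(\sigma,\tau)\Big).$$
This is obtained by expanding every $\nabla$-bracket in $N_{\hat J_k}^\nabla$ via $[\rho,\varrho]_\nabla=D_\rho\varrho-D_\varrho\rho-T^D(\rho,\varrho)$ and pushing $\hat J_k$ past the covariant derivatives using $D\hat J_k=0$; all connection-dependent terms cancel in pairs, leaving only the torsion expression. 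In particular (ii) $\Rightarrow$ (iii) is immediate: $T^D=0$ forces $N_{\hat J_1}^\nabla=N_{\hat J_2}^\nabla=0$, i.e.\ $\hat J_1$ and $\hat J_2$ are $\nabla$-integrable.

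The substantive direction is (iii) $\Rightarrow$ (ii). Evaluating the identity for $k=1$ on the diagonal blocks gives $N_{\hat J_1}^\nabla(\sigma^+,\tau^+)=-4\hat J_1^-\big(T^D(\sigma^+,\tau^+)\big)$ and $N_{\hat J_1}^\nabla(\sigma^-,\tau^-)=-4\hat J_1^+\big(T^D(\sigma^-,\tau^-)\big)$, so $N_{\hat J_1}^\nabla=0$ only forces $T^D(\sigma^+,\tau^+)\in\Gamma^\infty(V_1)$ and $T^D(\sigma^-,\tau^-)\in\Gamma^\infty(V_2)$: the torsion becomes block-diagonal, not yet zero. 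The decisive step is to feed this into the identity for $k=2$ restricted to $V_1\times V_1$: because $\hat J_2$ swaps the eigenbundles and annihilates the mixed torsion, the identity collapses to $N_{\hat J_2}^\nabla(\sigma^+,\tau^+)=-\big(T^D(\hat J_2\sigma^+,\hat J_2\tau^+)+T^D(\sigma^+,\tau^+)\big)$, equating a $V_2$-valued term to a $V_1$-valued term; since $V_1\cap V_2=0$, both vanish, whence $T^D|_{V_1\times V_1}=0$ and, applying $\hat J_2$, also $T^D|_{V_2\times V_2}=0$. Combined with $T^D|_{V_1\times V_2}=0$ this gives $T^D=0$, completing (iii) $\Rightarrow$ (ii) and hence the full equivalence.

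I expect the main obstacle to be precisely this last interplay: integrability of $\hat J_1$ alone only block-diagonalizes the torsion, and one must exploit that $\hat J_2$ is an isomorphism interchanging $V_1$ and $V_2$ to upgrade ``block-diagonal'' to ``zero.'' The one computation I would actually carry out in full is the cancellation in the torsion--Nijenhuis identity of the second step, checking that it holds verbatim for the $\nabla$-bracket rather than the Lie bracket.
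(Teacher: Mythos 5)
Your proposal is correct, and while it shares the paper's skeleton --- the splitting $TM\oplus T^*M=V_1\oplus V_2$, Lemma \ref{I}, and a block-by-block analysis in which the mixed block is handled by the defining property $T^D|_{V_1\times V_2}=0$ of the canonical connection --- your route through the key direction is genuinely different and, in fact, tighter than the paper's. The paper proves the cycle (i)$\Rightarrow$(ii)$\Rightarrow$(iii)$\Rightarrow$(i); in the last step it asserts $N^\nabla_{\hat J_1}(\sigma^+,\tau^+)=2\hat J_1T^D(\sigma^+,\tau^+)$ and from $N^\nabla_{\hat J_1}=0$ alone infers $T^D|_{V_1\times V_1}=0$, feeding that into Lemma \ref{I}(i). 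I checked your torsion--Nijenhuis identity (it holds verbatim for the $\nabla$-bracket: expanding each bracket as $D_\rho\varrho-D_\varrho\rho-T^D(\rho,\varrho)$ and using $D\hat J_k=0$, all connection terms cancel in pairs), and it shows the correct value on $V_1\times V_1$ is $2(\hat J_1-I)T^D(\sigma^+,\tau^+)=-4\hat J_1^-T^D(\sigma^+,\tau^+)$, so integrability of $\hat J_1$ only forces $T^D(\sigma^+,\tau^+)\in\Gamma^\infty(V_1)$ and $T^D(\sigma^-,\tau^-)\in\Gamma^\infty(V_2)$ --- exactly the ``block-diagonal, not yet zero'' phenomenon you flag. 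Your decisive step, evaluating the $k=2$ identity on $V_1\times V_1$ where the mixed torsions drop out and $N^\nabla_{\hat J_2}(\sigma^+,\tau^+)=-\big(T^D(\hat J_2\sigma^+,\hat J_2\tau^+)+T^D(\sigma^+,\tau^+)\big)$ equates a $V_2$-valued term with a $V_1$-valued one, supplies precisely the interplay between the two Nijenhuis tensors that the paper's argument needs but does not spell out (its $V_2\times V_2$ step, via $T^D(\hat J_2\sigma^-,\hat J_2\tau^-)=-T^D(\sigma^-,\tau^-)$, silently relies on the unestablished $V_1\times V_1$ conclusion). Your remaining pieces match the paper: (i)$\Leftrightarrow$(ii) via Lemma \ref{I}(i)--(ii) plus invertibility of $\hat J_2$ on the diagonal blocks and the eigenbundle-preservation argument of Lemma \ref{1005} killing both projections in Lemma \ref{I}(iii) --- the latter being literally the computation the paper performs inside its (iii)$\Rightarrow$(i) --- and (ii)$\Rightarrow$(iii) is the specialization $T^D=0$ of your identity, which is the paper's argument as well. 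Net comparison: the paper's cycle is shorter on the page, but only because of the unjustified jump; your organization costs one extra identity and buys a complete proof plus a reusable formula valid for any connection parallelizing the structures.
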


\begin{proof} Let us suppose that  $[\hat J_1,\hat J_2]_\nabla =0$. Then, from Lemma \ref{I}, we get $T^D(\sigma,\tau)=0$ for any $\sigma, \tau \in  \Gamma^\infty (V_1)$ and  for any $\sigma, \tau \in \Gamma^\infty (V_2)$. On the other hand, for $\sigma \in  \Gamma^\infty(V_1)$ and $\tau \in \Gamma^\infty(V_2)$, $T^D(\sigma,\tau)$ vanishes, as $D$ is the canonical connection. Then $T^D=0$.

Now let us suppose that $T^D=0$. Then, for any $\sigma, \tau \in\Gamma^\infty (TM\oplus T^*M)$, we get
$[\sigma,\tau]_\nabla=D_{\sigma} \tau -D_{\tau} \sigma$ and by using the fact that $D\hat J_1=D\hat J_2=0$, we immediately get $N^\nabla_{\hat J_1}=0$ and $N^\nabla_{\hat J_2}=0$. Also, remark that from Proposition \ref{ii} we get that $\hat J$ is $\nabla$-integrable, too.

Now, let us suppose that $\hat J_1$ and $\hat J_2$ are $\nabla$-integrable. Then, for $\sigma=\sigma^+, \tau=\tau^+ \in \Gamma^\infty (V_1)$ we get
$$N^\nabla_{\hat J_1}(\sigma^+,\tau^+)=2\hat J_1T^D(\sigma^+,\tau^+)=0$$
and from Lemma \ref{I}, $[\hat J_1,\hat J_2](\sigma,\tau)=0$ for any $\sigma,\tau \in \Gamma^\infty (V_1)$.

Moreover, for $\sigma=\sigma^-, \tau=\tau^- \in \Gamma^\infty (V_2)$, we get
$$N^\nabla_{\hat J_2}(\sigma^-,\tau^-)=-T^D(\hat J_2 \sigma^-,\hat J_2 \tau^-)-T^D(\sigma^-,\tau^-)=0$$
and from Lemma \ref{I}, $[\hat J_1,\hat J_2](\sigma,\tau)=0$ for any $\sigma,\tau \in \Gamma^\infty (V_2)$.

Finally, for any $\sigma =\sigma^+ \in \Gamma ^\infty(V_1),\tau=\tau^- \in \Gamma^\infty (V_2)$, we have
$$[\hat J_1,\hat J_2]_\nabla (\sigma, \tau)=2\hat J_1^-[\sigma^+,\hat J_2\tau^-]_\nabla-2\hat J_1^+[\hat J_2 \sigma^+, \tau^-]_\nabla-=$$
$$=2\hat J_1^-\Big(D_{\sigma^+}\hat J_2 \tau^- -D_{\hat J_2 \tau^-} \sigma^+-T^D(\sigma^+,\hat J_2 \tau^-)\Big)-$$
$$-2\hat J^+_1\Big(D_{\hat J_2 \sigma^+}\tau^- - D_{\tau^-}\hat J_2\sigma^+-T^D(\hat J_2\sigma^+,\tau^-)\Big)=$$
$$=2\hat J_1^-\hat J_2D_{\sigma^+} \tau^- -2\hat J_1^-D_{\hat J_2\tau^-}   \sigma^+ -2\hat J^+_1D_{\hat J_2 \sigma^+}\tau^- +2\hat J^+_1\hat J_2D_{\tau^-}\sigma^+.$$

Now, remark that $D_{\sigma^+} \tau^- \in \Gamma^\infty(V_2)$, then $\hat J_2D_{\sigma^+} \tau^-\in \Gamma^\infty (V_1)$, hence $\hat J_1^-\hat J_2D_{\sigma^+} \tau^- =0$.

Analogously, $D_{\hat J_2\tau^-}   \sigma^+\in \Gamma^\infty (V_1)$, hence $\hat J_1^-D_{\hat J_2\tau^-}   \sigma^+ =0$,
$D_{\hat J_2 \sigma^+}\tau^-$, $\hat J_2D_{\tau^-}\sigma^+\in \Gamma^\infty (V_2)$, hence $\hat J^+_1D_{\hat J_2 \sigma^+}\tau^-=0$, $\hat J^+_1\hat J_2D_{\tau^-}\sigma^+=0$.

Thus $[\hat J_1,\hat J_2]_\nabla (\sigma, \tau)=0$ for any $\sigma, \tau \in \Gamma^\infty(TM \oplus T^*M)$, and the proof is complete.
\end{proof}

\subsection{The canonical connection of a generalized quaternionic structure}

Due to Obata's theorem \cite{ob}, we know that, for an integrable almost quaternionic structure, there exists a unique torsion-free affine connection that makes it parallel, and called the \textit{canonical connection}.

For a generalized almost quaternionic structures $(\hat J_1,\hat J_2,\hat J_3)$ on a smooth manifold $M$, by using the explicit expression of the Obata connection given in \cite{AM}, we define an affine connection on $TM \oplus T^*M$, $D$, that we call the \textit{generalized Obata connection}, as follows:
\begin{equation}  \label{1000}
D_\sigma \tau ={1\over 12}\big\{ \sum_{\alpha, \beta, \gamma}\hat J_\alpha([\hat J_\beta \sigma, \hat J_\gamma \tau]_\nabla+[\hat J_\beta \tau, \hat J_\gamma \sigma]_\nabla)+2\sum_{\alpha =1} ^3 \hat J_\alpha([\hat J_\alpha \sigma, \tau]_\nabla+[\hat J_\alpha \tau, \sigma]_\nabla)-\end{equation}
$$-\sum_{\alpha =1} ^3 N^\nabla_{\hat J_\alpha}(\sigma,\tau)\big \}+ {1\over 2}[\sigma, \tau]_\nabla,$$
where $\nabla$ is a given affine connection on $M$, $\sigma,\tau\in \Gamma^\infty(TM\oplus T^*M)$ and $\alpha, \beta, \gamma$ in the first sum run in cyclic permutations of $1,2,3$.

\begin{proposition}\label{DDD} Let $M$ be a smooth manifold, let $\nabla$ be an affine connection on $M$, let $(\hat J_1,\hat J_2,\hat J_3)$ be a generalized almost quaternionic structure and let $D$ be the generalized Obata connection. Then:
$$D\hat J_1=D\hat J_2=D\hat J_3=0.$$
\end{proposition}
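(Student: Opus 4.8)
The plan is to show that $(D_\sigma\hat J_k)\tau:=D_\sigma(\hat J_k\tau)-\hat J_k(D_\sigma\tau)$ vanishes for every $k$ and all $\sigma,\tau\in\Gamma^\infty(TM\oplus T^*M)$, reducing first to the single case $k=1$ by a symmetry of (\ref{1000}) and then treating that case by direct expansion. First I would observe that the cyclic relabelling $\phi\colon\hat J_1\mapsto\hat J_2\mapsto\hat J_3\mapsto\hat J_1$ is an automorphism of the generalized almost quaternionic structure: it preserves $\hat J_\alpha^2=-I$, the anticommutativity, and the products $\hat J_1\hat J_2=\hat J_3$, $\hat J_2\hat J_3=\hat J_1$, $\hat J_3\hat J_1=\hat J_2$. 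In the defining formula (\ref{1000}) the first sum runs over the cyclic permutations of $(1,2,3)$, the remaining two sums run over all $\alpha$, and the term $\frac{1}{2}[\sigma,\tau]_\nabla$ does not involve the $\hat J_\alpha$; hence $D$ is literally unchanged when the triple $(\hat J_1,\hat J_2,\hat J_3)$ is replaced by the cyclically relabelled triple $(\hat J_2,\hat J_3,\hat J_1)$, which is again a generalized almost quaternionic structure. Applying the statement $D\hat J_1=0$, once it is proved, to this relabelled triple therefore yields $D\hat J_2=0$, and likewise $D\hat J_3=0$; so it suffices to establish $D\hat J_1=0$.

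For the case $k=1$ I would multiply (\ref{1000}) by $12$ and compute $12\,D_\sigma(\hat J_1\tau)$, by substituting $\hat J_1\tau$ for $\tau$, and $12\,\hat J_1(D_\sigma\tau)$, by applying $\hat J_1$ to each summand, and then subtract. In both expressions every composite operator is reduced to a single $\hat J_\alpha$ or to $\pm I$ by means of the multiplication table ($\hat J_1\hat J_2=\hat J_3$, $\hat J_2\hat J_1=-\hat J_3$, $\hat J_\alpha^2=-I$, and so on), while inside the brackets I would use the $\mathbb R$-bilinearity and the antisymmetry $[\sigma,\tau]_\nabla=-[\tau,\sigma]_\nabla$ of the $\nabla$-bracket. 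After this normalization each summand of the difference is of the form $\hat J_\mu[\hat J_\nu\sigma,\hat J_\rho\tau]_\nabla$ with $\hat J_\mu\in\{\pm I,\pm\hat J_1,\pm\hat J_2,\pm\hat J_3\}$, so the two sides become directly comparable.

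The pure-bracket terms coming from the two double sums and from $6[\sigma,\tau]_\nabla$ then regroup, by the quaternion relations, into exactly the combinations that define the Nijenhuis tensors, and these are matched by the explicit $-\sum_\alpha N^\nabla_{\hat J_\alpha}(\sigma,\tau)$ appearing in (\ref{1000}). Concretely, the cancellations rest on the single-structure identities $N^\nabla_{\hat J_\alpha}(\hat J_\alpha\sigma,\tau)=N^\nabla_{\hat J_\alpha}(\sigma,\hat J_\alpha\tau)=-\hat J_\alpha N^\nabla_{\hat J_\alpha}(\sigma,\tau)$, which follow at once from $\hat J_\alpha^2=-I$ and the definition of $N^\nabla_{\hat J_\alpha}$, together with the mixed relations among the three Nijenhuis tensors of the type established in the proof of Proposition \ref{i}. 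Crucially, the $\nabla$-bracket obeys the same antisymmetry and the same Leibniz rule $[\sigma,f\tau]_\nabla=f[\sigma,\tau]_\nabla+\sigma(f)\tau$ as the ordinary Lie bracket, so every manipulation is purely formal and the computation reproduces the classical verification of the Obata connection in \cite{AM}.

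I expect the main obstacle to be nothing conceptual but rather the sheer bookkeeping: after full expansion the difference $12\bigl(D_\sigma(\hat J_1\tau)-\hat J_1 D_\sigma\tau\bigr)$ carries several dozen bracket terms, and the task is to organize them so that the cancellation is transparent. Rather than proceeding term by term, I would sort the summands by the \emph{shape} of their arguments --- how many factors $\hat J_\alpha$ occur inside the bracket and with which indices --- and check that each shape-class sums to zero, the $\nabla$-integrability defect being absorbed precisely by $-\sum_\alpha N^\nabla_{\hat J_\alpha}(\sigma,\tau)$. Since no step uses the vanishing of any Nijenhuis tensor, the identity $D\hat J_\alpha=0$ holds for an arbitrary generalized almost quaternionic structure, whether or not it is $\nabla$-integrable.
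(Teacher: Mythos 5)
Your reduction scheme is genuinely different from the paper's and is correct. The paper verifies $D\hat J_1=0$ and $D\hat J_2=0$ by two separate direct computations and then obtains $D\hat J_3=0$ from the product rule $D\hat J_3=D(\hat J_1\hat J_2)=(D\hat J_1)\hat J_2+\hat J_1(D\hat J_2)$; you instead exploit the cyclic invariance of the defining formula together with the fact that $(\hat J_2,\hat J_3,\hat J_1)$ and $(\hat J_3,\hat J_1,\hat J_2)$ are again generalized almost quaternionic structures (indeed $\hat J_2\hat J_3=\hat J_1=-\hat J_3\hat J_2$ and $\hat J_3\hat J_1=\hat J_2=-\hat J_1\hat J_3$ follow from the axioms), reducing everything to the single identity $D\hat J_1=0$. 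That symmetry argument is valid --- the first sum is over cyclic permutations of $(1,2,3)$ and the other two sums are over all $\alpha$, so relabelling leaves $D$ unchanged --- and it buys a threefold economy over the paper. Your supporting identities ($N^\nabla_{\hat J_\alpha}(\hat J_\alpha\sigma,\tau)=N^\nabla_{\hat J_\alpha}(\sigma,\hat J_\alpha\tau)=-\hat J_\alpha N^\nabla_{\hat J_\alpha}(\sigma,\tau)$, antisymmetry and Leibniz rule of the $\nabla$-bracket) all check out, and your closing remark that no integrability hypothesis enters matches the proposition's generality.

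There is, however, one concrete point where your plan as written would fail: the sign of the Nijenhuis term. If you expand (\ref{1000}) literally, with $-\sum_\alpha N^\nabla_{\hat J_\alpha}(\sigma,\tau)$, the per-$\alpha$ blocks come out as $3\hat J_\alpha[\hat J_\alpha\sigma,\tau]_\nabla-\hat J_\alpha[\sigma,\hat J_\alpha\tau]_\nabla-[\hat J_\alpha\sigma,\hat J_\alpha\tau]_\nabla$, whereas the expanded formula the paper actually computes with in its proof has blocks $\hat J_\alpha[\hat J_\alpha\sigma,\tau]_\nabla-3\hat J_\alpha[\sigma,\hat J_\alpha\tau]_\nabla+[\hat J_\alpha\sigma,\hat J_\alpha\tau]_\nabla$; the latter corresponds to $+\sum_\alpha N^\nabla_{\hat J_\alpha}$ in (\ref{1000}), and it is also the version whose torsion equals $+\frac16\sum_\alpha N^\nabla_{\hat J_\alpha}$ as asserted in Proposition \ref{1006} (the printed sign would give $-\frac16\sum_\alpha N^\nabla_{\hat J_\alpha}$). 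The two candidates differ by $\frac16\sum_\alpha N^\nabla_{\hat J_\alpha}$, so both can parallelize $\hat J_1$ only if $\sum_\alpha N^\nabla_{\hat J_\alpha}(\sigma,\hat J_1\tau)=\hat J_1\sum_\alpha N^\nabla_{\hat J_\alpha}(\sigma,\tau)$, which fails as a formal identity (the left side contains the shapes $-[\hat J_2\sigma,\hat J_3\tau]_\nabla+[\hat J_3\sigma,\hat J_2\tau]_\nabla$ with no outer $\hat J$, which cannot arise on the right); so with the printed sign your cancellation would not close, leaving a residue proportional to exactly this defect. The minus sign in (\ref{1000}) is thus a typo in the paper, but your blind plan inherits it, so you must correct it before expanding. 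Once corrected, the verification is in fact \emph{simpler} than you anticipate: substituting $\hat J_1\tau$ into the expanded formula and reducing each composite by the multiplication table, the sixteen resulting terms of $12\,D_\sigma(\hat J_1\tau)$ match the sixteen terms of $12\,\hat J_1(D_\sigma\tau)$ bijectively, one by one --- no Nijenhuis identities, and none of the mixed relations from Proposition \ref{i}, are needed at all; your shape-sorting bookkeeping then becomes a literal perfect matching, exactly as in the classical verification of \cite{AM}.
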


\begin{proof} For any $\sigma,\tau \in \Gamma^\infty (TM \oplus T^*M)$ the generalized Obata connection (\ref{1000}) is given by:
$$D_\sigma \tau={1\over 12}\big\{ \hat J_1[\hat J_2 \sigma, \hat J_3\tau]_\nabla-\hat J_1[\hat J_3 \sigma, \hat J_2 \tau]_\nabla+ \hat J_2[\hat J_3 \sigma, \hat J_1\tau]_\nabla-\hat J_2[\hat J_1 \sigma, \hat J_3 \tau]_\nabla+ \hat J_3[\hat J_1 \sigma, \hat J_2\tau]_\nabla-$$
$$-\hat J_3[\hat J_2 \sigma, \hat J_1 \tau]_\nabla+ \hat J_1[\hat J_1 \sigma, \tau]_\nabla-3\hat J_1[\sigma, \hat J_1 \tau]_\nabla+ \hat J_2[\hat J_2 \sigma, \tau]_\nabla-3\hat J_2[\sigma, \hat J_2 \tau]_\nabla+ \hat J_3[\hat J_3 \sigma, \tau]_\nabla-$$
$$-3\hat J_3[\sigma, \hat J_3 \tau]_\nabla+
[\hat J_1 \sigma, \hat J_1\tau]_\nabla+[\hat J_2 \sigma, \hat J_2 \tau]_\nabla+[\hat J_3 \sigma, \hat J_3 \tau]_\nabla+3[\sigma,\tau]_\nabla \big \}.$$

Direct computations give $D_\sigma (\hat J_1 \tau)=\hat J_1(D_\sigma \tau)$, $D_\sigma(\hat J_2 \tau)=\hat J_2(D_\sigma \tau)$, hence $D\hat J_1=D\hat J_2=0$ and furthermore $D(\hat J_1 \hat J_2)=D\hat J_3=0$, so the proof is complete.
\end{proof}

\begin{proposition}\label{1006} The torsion $T^D$ of the generalized Obata connection is given by:
$$T^D(\sigma,\tau)={1\over 6} \sum_{\alpha =1}^3 N^\nabla_{\hat J_\alpha}(\sigma,\tau),$$
for any $\sigma,\tau \in \Gamma^\infty (TM \oplus T^*M)$.
\end{proposition}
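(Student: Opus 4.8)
The plan is to compute $T^D(\sigma,\tau)=D_\sigma\tau-D_\tau\sigma-[\sigma,\tau]_\nabla$ directly from the explicit expression for $D$ written out in the proof of Proposition \ref{DDD}, the whole argument being driven by the skew-symmetry of the bracket. The first thing I would record is that $[\tau,\sigma]_\nabla=-[\sigma,\tau]_\nabla$, which is immediate from $[X+\eta,Y+\beta]_\nabla=[X,Y]+\nabla_X\beta-\nabla_Y\eta$. Consequently I would decompose $D_\sigma\tau$ into the sum of its part symmetric and its part antisymmetric under the exchange $\sigma\leftrightarrow\tau$: forming $D_\sigma\tau-D_\tau\sigma$ annihilates the symmetric part and doubles the antisymmetric one, so only antisymmetric contributions can survive into the torsion.

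Next I would separate out the one block that is manifestly symmetric, namely the cyclic Fr\"olicher--Nijenhuis double sum $\sum_{\alpha,\beta,\gamma}\hat J_\alpha\big([\hat J_\beta\sigma,\hat J_\gamma\tau]_\nabla+[\hat J_\beta\tau,\hat J_\gamma\sigma]_\nabla\big)$, since exchanging $\sigma$ and $\tau$ merely interchanges the two brackets inside each summand. This block therefore drops out of $D_\sigma\tau-D_\tau\sigma$; note it is precisely the block carrying the mixed terms $\hat J_\beta\sigma,\hat J_\gamma\tau$ with $\beta\neq\gamma$, which is reassuring because the target $\sum_\alpha N^\nabla_{\hat J_\alpha}$ contains no such mixed terms. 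Every remaining block of $D$ involves a single $\hat J_\alpha$, and I would extract the antisymmetric part of each by direct use of skew-symmetry, discarding its symmetric part.

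Collecting these antisymmetric remainders and doubling them, I would then regroup, for each fixed $\alpha$, into the combination $[\hat J_\alpha\sigma,\hat J_\alpha\tau]_\nabla-\hat J_\alpha[\hat J_\alpha\sigma,\tau]_\nabla-\hat J_\alpha[\sigma,\hat J_\alpha\tau]_\nabla-[\sigma,\tau]_\nabla$. Using $\hat J_\alpha^2=-I$ this is exactly $N^\nabla_{\hat J_\alpha}(\sigma,\tau)$, its final summand being $\hat J_\alpha^2[\sigma,\tau]_\nabla=-[\sigma,\tau]_\nabla$. After this regrouping a multiple of $[\sigma,\tau]_\nabla$ is left over, and the content of the proof is that it is exactly the multiple needed to cancel against the $-[\sigma,\tau]_\nabla$ in the definition of $T^D$; equivalently $D_\sigma\tau-D_\tau\sigma=\tfrac16\sum_\alpha N^\nabla_{\hat J_\alpha}(\sigma,\tau)+[\sigma,\tau]_\nabla$, leaving precisely $\tfrac16\sum_{\alpha=1}^3 N^\nabla_{\hat J_\alpha}(\sigma,\tau)$.

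The step I expect to be the real obstacle is purely the bookkeeping in this last regrouping: tracking the integer coefficients generated when the symmetric blocks are dropped and the antisymmetric ones doubled, and checking that all the stray $[\sigma,\tau]_\nabla$ terms --- the three copies of $-[\sigma,\tau]_\nabla$ coming from the three factors $\hat J_\alpha^2[\sigma,\tau]_\nabla$, the explicit bracket term sitting inside $D$, and the $-[\sigma,\tau]_\nabla$ from the torsion --- combine with the correct signs. As an independent check I would use the general identity, valid whenever $D\hat J_\alpha=0$ (which holds here by Proposition \ref{DDD}): a short computation in which all the $D$-terms cancel gives $N^\nabla_{\hat J_\alpha}(\sigma,\tau)=-\big(T^D(\hat J_\alpha\sigma,\hat J_\alpha\tau)-\hat J_\alpha T^D(\hat J_\alpha\sigma,\tau)-\hat J_\alpha T^D(\sigma,\hat J_\alpha\tau)+\hat J_\alpha^2 T^D(\sigma,\tau)\big)$, a constraint that the claimed expression for $T^D$ must satisfy and against which the coefficient $\tfrac16$ can be confirmed.
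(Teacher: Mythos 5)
Your proof is correct and is precisely the ``direct computation'' that the paper's one-line proof invokes: since $[\cdot,\cdot]_\nabla$ is skew-symmetric, the cyclic double sum and the block $2\sum_{\alpha=1}^3\hat J_\alpha\big([\hat J_\alpha\sigma,\tau]_\nabla+[\hat J_\alpha\tau,\sigma]_\nabla\big)$ are symmetric under $\sigma\leftrightarrow\tau$ and drop out of $D_\sigma\tau-D_\tau\sigma$, after which the remaining antisymmetric terms regroup, exactly as you describe, into $D_\sigma\tau-D_\tau\sigma=\tfrac16\sum_{\alpha=1}^3N^\nabla_{\hat J_\alpha}(\sigma,\tau)+[\sigma,\tau]_\nabla$, giving the claimed torsion. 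One remark: you (rightly) base the bookkeeping on the expanded expression for $D$ in the proof of Proposition \ref{DDD}, which is the version compatible with the stated coefficient $+\tfrac16$; formula (\ref{1000}) read literally, with the minus sign in front of $\sum_{\alpha=1}^3N^\nabla_{\hat J_\alpha}(\sigma,\tau)$ inside the braces, differs from that expansion by $-\tfrac16\sum_{\alpha=1}^3N^\nabla_{\hat J_\alpha}$ and would yield the torsion with the opposite sign, so your proposed consistency check via the identity $N^\nabla_{\hat J_\alpha}(\sigma,\tau)=-\big(T^D(\hat J_\alpha\sigma,\hat J_\alpha\tau)-\hat J_\alpha T^D(\hat J_\alpha\sigma,\tau)-\hat J_\alpha T^D(\sigma,\hat J_\alpha\tau)+\hat J_\alpha^2T^D(\sigma,\tau)\big)$, valid whenever $D\hat J_\alpha=0$, is exactly the right safeguard for pinning down this sign.
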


\begin{proof} From the definition of the torsion tensor of $D$ defined by the bracket of the given connection $\nabla$ as $T^D(\sigma,\tau):=D_\sigma \tau -D_\tau \sigma -[\sigma,\tau]_\nabla$, a direct computation gives the statement.
\end{proof}

\begin{corollary}  Let $M$ be a smooth manifold, let $\nabla$ be an affine connection on $M$ and let $(\hat J_1,\hat J_2,\hat J_3)$ be a generalized almost quaternionic structure. Then $\hat J_1,\hat J_2,\hat J_3$ are $\nabla$-integrable if and only if the generalized Obata connection is torsion-free.
\end{corollary}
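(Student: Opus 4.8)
The plan is to derive both implications directly from the two facts just established for the generalized Obata connection $D$: its parallelism property from Proposition \ref{DDD}, namely $D\hat J_1=D\hat J_2=D\hat J_3=0$, and its torsion formula from Proposition \ref{1006}, namely $T^D=\tfrac{1}{6}\sum_{\alpha=1}^{3}N^\nabla_{\hat J_\alpha}$. The forward implication is then immediate: if $\hat J_1,\hat J_2,\hat J_3$ are $\nabla$-integrable, each $N^\nabla_{\hat J_\alpha}$ vanishes, so Proposition \ref{1006} yields $T^D=0$.

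For the converse one might be tempted to start from $T^D=0$, which by Proposition \ref{1006} gives only $\sum_{\alpha=1}^{3}N^\nabla_{\hat J_\alpha}=0$, and then try to isolate the three Nijenhuis tensors by substituting $\sigma\mapsto\hat J_\beta\sigma$, $\tau\mapsto\hat J_\beta\tau$ and exploiting the quaternionic relations. I expect this to be the main obstacle: the mixed terms $N^\nabla_{\hat J_\beta}(\hat J_\gamma\sigma,\hat J_\gamma\tau)$ do not collapse cleanly, and since $[\cdot,\cdot]_\nabla$ does not obey the Jacobi identity the usual type-decomposition arguments are unavailable. I would avoid this route.

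Instead, the key step is to use that $D$ parallelizes every $\hat J_\alpha$. Assuming $T^D=0$, the defining relation $T^D(\sigma,\tau)=D_\sigma\tau-D_\tau\sigma-[\sigma,\tau]_\nabla$ rewrites the bracket as $[\sigma,\tau]_\nabla=D_\sigma\tau-D_\tau\sigma$. Substituting this into
\[
N^\nabla_{\hat J_\alpha}(\sigma,\tau)=[\hat J_\alpha\sigma,\hat J_\alpha\tau]_\nabla-\hat J_\alpha[\hat J_\alpha\sigma,\tau]_\nabla-\hat J_\alpha[\sigma,\hat J_\alpha\tau]_\nabla+\hat J_\alpha^{2}[\sigma,\tau]_\nabla
\]
and moving each $\hat J_\alpha$ through $D$ by means of $D_\rho(\hat J_\alpha\omega)=\hat J_\alpha(D_\rho\omega)$, every summand becomes of the form $\hat J_\alpha^{k}D_{(\cdot)}(\cdot)$; the resulting eight terms then cancel in pairs, giving $N^\nabla_{\hat J_\alpha}=0$ for $\alpha=1,2,3$. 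Crucially this computation invokes only the $\mathbb R$-bilinearity of $D$ and the parallelism $D\hat J_\alpha=0$, never a Jacobi identity, so it is valid in the generalized setting. Hence all three structures are $\nabla$-integrable, which closes the equivalence.
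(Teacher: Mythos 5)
Your proof is correct and follows essentially the same route as the paper: the forward implication is read off from the torsion formula $T^D=\tfrac{1}{6}\sum_{\alpha}N^\nabla_{\hat J_\alpha}$ of Proposition \ref{1006}, and the converse uses exactly the paper's observation that, since $D$ parallelizes $\hat J_1,\hat J_2,\hat J_3$, writing $[\sigma,\tau]_\nabla=D_\sigma\tau-D_\tau\sigma$ makes the eight terms of each Nijenhuis tensor cancel in pairs. The only difference is that you spell out this cancellation (and rightly note that trying to isolate the $N^\nabla_{\hat J_\alpha}$ from their sum would be the wrong route), whereas the paper states it without computation.
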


\begin{proof} Indeed, if  $\hat J_1,\hat J_2,\hat J_3$ are $\nabla$-integrable then, from Proposition \ref{1006}, we get that $T^D=0$. On the other hand, as $D$ parallelizes  $\hat J_1$, $\hat J_2$ and $\hat J_3$, the vanishing of the torsion tensor implies that the Nijenhuis tensor of $\hat J_i$ is $0$ for $i=1,2,3$.
\end{proof}

Let us denote $E=TM \oplus T^*M$ and $E^{\mathbb C}=(TM \oplus T^*M)\otimes _{\mathbb R}\mathbb C$. The splitting in $\pm \sqrt{-1}$ eigenspaces of $\hat J_i$, $i=1,2,3$, is $E^{\mathbb C}=E^{1,0}_i \oplus E^{0,1}_i$, where
$$E^{1,0}_i =\{\sigma -\sqrt {-1} \hat J_i \sigma \,  \vert \, \sigma \in E\}, \, \, E^{0,1}_i =\overline{E^{1,0}_i}=\{\sigma +\sqrt {-1} \hat J_i \sigma \, \vert \, \sigma \in E\}.$$

Let $V_1=E^{1,0}_1$, $V_2=E^{0,1}_1$ and $V_3=E^{1,0}_2$.
We have immediately that $\hat J_2\tau\in \Gamma^\infty (V_2)$, for any $\tau \in \Gamma^\infty (V_1)$. Finally, let us define the canonical projections:
$$\hat J_i^-:E \rightarrow E^{1,0}_i ,\, \, \hat J_i^+:E \rightarrow E^{0,1}_i, \, \, \hat J_i^\mp \tau:=\tau_i^\mp={1\over 2}(\tau\mp \sqrt {-1} \hat J_i \tau).$$

A direct computation gives immediately the following.
\begin{lemma}\label{1007} For any $\sigma\in \Gamma ^\infty (E)$ we have
\[
(\hat J_1 \sigma)^\mp=\hat J_1 \sigma ^\mp=\pm\sqrt {-1} \sigma ^\mp, \ \ (\hat J_2 \sigma)^\mp=\hat J_2 \sigma ^\pm, \ \ (\hat J_3 \sigma)^\mp=\mp\sqrt {-1} (\hat J_2\sigma ^\mp).
\]
\end{lemma}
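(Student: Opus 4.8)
The plan is to verify each of the three identities by direct substitution of the defining formula for the projections, $\sigma^{\mp}=\sigma_1^{\mp}=\tfrac12(\sigma\mp\sqrt{-1}\,\hat J_1\sigma)$, and then to simplify using only the algebraic relations of the generalized almost quaternionic structure, namely $\hat J_1^2=\hat J_2^2=\hat J_3^2=-I$, $\hat J_1\hat J_2=-\hat J_2\hat J_1$ and $\hat J_3=\hat J_1\hat J_2$. No brackets or differentiation enter: everything reduces to pointwise linear algebra in the fibres of $E^{\mathbb C}$, so the three claims are really statements about how $\hat J_1,\hat J_2,\hat J_3$ act on the two eigenbundles $E^{1,0}_1$ and $E^{0,1}_1$.

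First I would dispatch the identity for $\hat J_1$. Applying $\hat J_1$ to $\sigma^{\mp}=\tfrac12(\sigma\mp\sqrt{-1}\,\hat J_1\sigma)$ and using $\hat J_1^2=-I$ gives $\hat J_1\sigma^{\mp}=\tfrac12(\hat J_1\sigma\pm\sqrt{-1}\,\sigma)=\pm\sqrt{-1}\,\sigma^{\mp}$; comparing this with the direct expansion of $(\hat J_1\sigma)^{\mp}$ shows that all three expressions coincide. This is just the assertion that $\sigma^{\mp}$ are the $\pm\sqrt{-1}$-eigenvectors of $\hat J_1$, which is built into the definition of the projections. For the $\hat J_2$ identity I would expand $(\hat J_2\sigma)^{\mp}=\tfrac12(\hat J_2\sigma\mp\sqrt{-1}\,\hat J_1\hat J_2\sigma)$, replace $\hat J_1\hat J_2$ by $-\hat J_2\hat J_1$, and factor $\hat J_2$ out to the left; the anticommutation converts the sign in front of $\sqrt{-1}$, so the result is $\hat J_2\,\tfrac12(\sigma\pm\sqrt{-1}\,\hat J_1\sigma)=\hat J_2\sigma^{\pm}$, i.e. the superscript flips. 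The content here is precisely that $\hat J_2$ interchanges the two eigenbundles of $\hat J_1$.

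The third identity then follows by composing the first two: writing $\hat J_3\sigma=\hat J_1(\hat J_2\sigma)$ and applying the $\hat J_1$-identity to the vector $\hat J_2\sigma$, followed by the $\hat J_2$-identity, reduces $(\hat J_3\sigma)^{\mp}$ to a scalar multiple of $\hat J_2\sigma^{\pm}$; alternatively one substitutes directly and uses $\hat J_1\hat J_3=\hat J_1^2\hat J_2=-\hat J_2$. I expect the only delicate point in the whole argument to be the bookkeeping of the factors of $\sqrt{-1}$ together with the superscript that is flipped by the anticommutation relation, since a single mis-tracked sign turns $\pm\sqrt{-1}$ into $\mp\sqrt{-1}$ and interchanges $\sigma^{\pm}$ with $\sigma^{\mp}$. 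Thus the care lies entirely in carrying these signs consistently through $\hat J_1\hat J_2=-\hat J_2\hat J_1$ and $\hat J_1^2=-I$; once the conventions are pinned down, each step is a one-line fibrewise computation and the lemma is immediate.
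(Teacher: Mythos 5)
Your method --- expand the projections $\sigma^{\mp}=\tfrac12(\sigma\mp\sqrt{-1}\,\hat J_1\sigma)$ and use only the fibrewise algebra $\hat J_1^2=\hat J_2^2=-I$, $\hat J_1\hat J_2=-\hat J_2\hat J_1$, $\hat J_3=\hat J_1\hat J_2$ --- is exactly the paper's, whose entire proof is the phrase ``a direct computation gives immediately the following'', and your verifications of the first two identities are correct. But you stop the third computation at ``a scalar multiple of $\hat J_2\sigma^{\pm}$'', which is precisely where the one genuine issue sits. Completing your own composition gives
\[
(\hat J_3\sigma)^{\mp}=(\hat J_1(\hat J_2\sigma))^{\mp}=\pm\sqrt{-1}\,(\hat J_2\sigma)^{\mp}=\pm\sqrt{-1}\,\hat J_2\sigma^{\pm},
\]
equivalently $(\hat J_3\sigma)^{\pm}=\mp\sqrt{-1}\,\hat J_2\sigma^{\mp}$, and this does \emph{not} coincide with the printed statement $(\hat J_3\sigma)^{\mp}=\mp\sqrt{-1}\,(\hat J_2\sigma^{\mp})$: the correlation of the signs in the lemma is off, i.e.\ the third formula as printed is false and the flipped-superscript version your sketch lands on is the true one. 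Your alternative route confirms this: $\hat J_1\hat J_3=\hat J_1^2\hat J_2=-\hat J_2$ yields $(\hat J_3\sigma)^-=\tfrac12(\hat J_1\hat J_2\sigma+\sqrt{-1}\,\hat J_2\sigma)=\sqrt{-1}\,\hat J_2\sigma^{+}$, whereas the printed right-hand side with upper signs is $-\sqrt{-1}\,\hat J_2\sigma^{-}=\tfrac12(\hat J_1\hat J_2\sigma-\sqrt{-1}\,\hat J_2\sigma)$; the two differ by $\sqrt{-1}\,\hat J_2\sigma$. A concrete instance: on $\mathbb H$ with $\hat J_1,\hat J_2,\hat J_3$ acting as left multiplication by $i,j,k$ and $\sigma=1$, one gets $(\hat J_3\sigma)^-=\tfrac12(k+\sqrt{-1}\,j)$ versus $-\sqrt{-1}\,\hat J_2\sigma^-=\tfrac12(k-\sqrt{-1}\,j)$.

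So the advice is twofold. First, write the final signs out explicitly instead of ``a scalar multiple'': your argument, completed, proves the corrected identity $(\hat J_3\sigma)^{\pm}=\mp\sqrt{-1}\,\hat J_2\sigma^{\mp}$, and a proof of the lemma verbatim cannot be closed because the printed sign pairing is a typo. Second, note that the discrepancy is harmless downstream: in the proof of Theorem \ref{HJP} only the $\hat J_1$- and $\hat J_2$-identities of this lemma are invoked, so correcting the $\hat J_3$-formula does not affect the rest of the paper.
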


\begin{lemma}\label{1010} Let $(\hat J_1, \hat J_2, \hat J)$ be a generalized almost quaternionic structure, let $D$ be an affine connection on $E=TM \oplus T^*M$ and consider the extension of $D$ on $E^{\mathbb C}$. Then the following conditions are equivalent:

(i) $D_\sigma \tau_i \in \Gamma^\infty(V_i)$, for any $\sigma \in \Gamma ^\infty(TM\oplus T^*M)$ and any $\tau_i \in \Gamma ^\infty (V_i)$, $i=1,2,3$;

(ii) $D\hat J_1=D\hat J_2=0.$
\end{lemma}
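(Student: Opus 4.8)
The plan is to follow the blueprint of Lemma \ref{1005}, transposed from the real eigenbundle decomposition of the para-quaternionic case to the complexified decomposition $E^{\mathbb C}=V_1\oplus V_2$ recorded above, where $V_1=E^{1,0}_1$ and $V_2=E^{0,1}_1$ are the $\pm\sqrt{-1}$-eigenbundles of $\hat J_1$, while $V_3=E^{1,0}_2$ is the $+\sqrt{-1}$-eigenbundle of $\hat J_2$. Throughout I would use the Leibniz identity $(D_\sigma\hat J_k)\tau=D_\sigma(\hat J_k\tau)-\hat J_k(D_\sigma\tau)$, the fact that $D_\sigma\hat J_k$ is tensorial in $\tau$, Lemma \ref{1007}, and the observation that, since $\hat J_k^2=-I$, membership $\mu\in\Gamma^\infty(V_1)$ (resp. $V_2$, $V_3$) is equivalent to the eigenvalue equation $\hat J_1\mu=\sqrt{-1}\mu$ (resp. $\hat J_1\mu=-\sqrt{-1}\mu$, $\hat J_2\mu=\sqrt{-1}\mu$).

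For the implication (ii)$\Rightarrow$(i) there is essentially no work: if $D\hat J_1=D\hat J_2=0$ then each $D_\sigma$ commutes with $\hat J_1$ and with $\hat J_2$, hence preserves every eigenbundle of $\hat J_1$ and of $\hat J_2$. Reading this off the eigenvalue equations above gives $D_\sigma\tau_1\in\Gamma^\infty(V_1)$, $D_\sigma\tau_2\in\Gamma^\infty(V_2)$ and $D_\sigma\tau_3\in\Gamma^\infty(V_3)$, which is precisely (i).

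The substance is in (i)$\Rightarrow$(ii), which I would split into $D\hat J_1=0$ and $D\hat J_2=0$. For $\tau\in\Gamma^\infty(V_1)$ one has $\hat J_1\tau=\sqrt{-1}\tau$, so $(D_\sigma\hat J_1)\tau=\sqrt{-1}D_\sigma\tau-\hat J_1(D_\sigma\tau)$; by (i) the term $D_\sigma\tau$ again lies in $V_1$, whence $\hat J_1(D_\sigma\tau)=\sqrt{-1}D_\sigma\tau$ and $(D_\sigma\hat J_1)\tau=0$. The same computation with eigenvalue $-\sqrt{-1}$ gives $(D_\sigma\hat J_1)\tau=0$ for $\tau\in\Gamma^\infty(V_2)$, and since $E^{\mathbb C}=V_1\oplus V_2$ this forces $D\hat J_1=0$. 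Likewise, applying (i) for $l=3$ and $\hat J_2\tau=\sqrt{-1}\tau$ on $V_3$ yields $(D_\sigma\hat J_2)\tau=0$ for every $\tau\in\Gamma^\infty(V_3)=\Gamma^\infty(E^{1,0}_2)$.

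The one genuinely delicate point, and the step I expect to be the main obstacle, is that condition (i) controls $\hat J_2$ only on the half $V_3=E^{1,0}_2$ of $E^{\mathbb C}$; to conclude $D\hat J_2=0$ I must propagate the vanishing of $D_\sigma\hat J_2$ to the conjugate eigenbundle $E^{0,1}_2$. Here the anticommutativity $\hat J_1\hat J_2=-\hat J_2\hat J_1$ enters: since $\hat J_2(\hat J_1\zeta)=-\hat J_1\hat J_2\zeta=-\sqrt{-1}\hat J_1\zeta$ for $\zeta\in\Gamma^\infty(V_3)$, the invertible map $\hat J_1$ sends $V_3=E^{1,0}_2$ onto $E^{0,1}_2$, so every section of $E^{0,1}_2$ is of the form $\hat J_1\zeta$ with $\zeta\in\Gamma^\infty(V_3)$. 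Expanding $(D_\sigma\hat J_2)(\hat J_1\zeta)=D_\sigma(\hat J_2\hat J_1\zeta)-\hat J_2 D_\sigma(\hat J_1\zeta)$ and using $\hat J_2\hat J_1=-\hat J_1\hat J_2$, the already-established $D\hat J_1=0$, and $(D_\sigma\hat J_2)\zeta=0$, both terms $D_\sigma(\hat J_2\hat J_1\zeta)$ and $\hat J_2 D_\sigma(\hat J_1\zeta)$ reduce to $-\hat J_1\hat J_2(D_\sigma\zeta)$, so their difference, namely $(D_\sigma\hat J_2)(\hat J_1\zeta)$, vanishes. Hence $D_\sigma\hat J_2$ vanishes on $E^{1,0}_2\oplus E^{0,1}_2=E^{\mathbb C}$, giving $D\hat J_2=0$ and completing (ii). The identity $D\hat J_3=0$ then follows automatically, since $\hat J_3=\hat J_1\hat J_2$ and $D$ differentiates the composition, although it is not required for the statement.
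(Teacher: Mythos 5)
Your proof is correct and follows essentially the same route as the paper: the paper's proof consists precisely of the Leibniz identities $D_\sigma(\tau\mp\sqrt{-1}\hat J_1\tau)=D_\sigma\tau\mp\sqrt{-1}\hat J_1(D_\sigma\tau)\mp(D_\sigma\hat J_1)\tau$ and their analogues for $\hat J_2$, applied to the generators of $V_1$, $V_2$, $V_3$, which is exactly your eigenvalue-equation argument written out on spanning sections. The only substantive difference is the step you rightly flag as delicate --- propagating $(D_\sigma\hat J_2)=0$ from $E^{1,0}_2$ to $E^{0,1}_2$ --- which you settle explicitly via the anticommutation isomorphism $\hat J_1:E^{1,0}_2\to E^{0,1}_2$ together with the already-established $D\hat J_1=0$, whereas the paper leaves this point implicit (it follows there from reality of $D_\sigma\hat J_2$ and conjugation, since $\sigma$ is a real section); your version is a legitimate, and more self-contained, way to close that gap.
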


\begin{proof} It follows immediately from the properties of $D$, of $V_i$ for $i=1,2,3$ and the following:
$$D_\sigma (\tau \mp \sqrt{-1}\hat J_1 \tau)=D_\sigma \tau \mp \sqrt {-1}\hat J_1(D_\sigma \tau)\mp(D_\sigma \hat J_1)\tau,$$
$$D_\sigma (\hat J_2 (\tau \mp \sqrt{-1}\hat J_1 \tau))=\hat J_2(D_\sigma(\tau \mp \sqrt{-1}\hat J_1 \tau))+(D_\sigma \hat J_2)(\tau \mp \sqrt{-1}\hat J_1 \tau),$$
$$D_\sigma (\tau \mp \sqrt{-1}\hat J_2 \tau)=D_\sigma \tau \mp \sqrt {-1}\hat J_2(D_\sigma \tau)\mp(D_\sigma \hat J_2)\tau,$$
where $\tau \in \Gamma^\infty(TM \oplus T^*M)$.
\end{proof}

\begin{theorem} \label{HJP} Let $M$ be a smooth manifold, let $\nabla$ be an affine connection on $M$ and let $(\hat J_1, \hat J_2, \hat J)$ be a generalized quaternionic structure. Then there exists a unique affine connection $D$ on $TM\oplus T^*M$ with the following properties:

(i) $D\hat J_1=D\hat J_2=D\hat J=0;$

(ii) $T^D(\sigma,\tau)=0$, for any $\sigma \in \Gamma ^\infty (V_1)$ and any $\tau \in \Gamma ^\infty (V_2),$
where $T^D$ is the torsion tensor of the extension of $D$ to $E^{\mathbb C}$ defined by the bracket of the given connection $\nabla$ as:
$$T^D(\sigma,\tau):=D_\sigma \tau-D_\tau \sigma-[\sigma,\tau]_\nabla.$$

The connection $D$ will be called the \textit{canonical connection} of $(\hat J_1, \hat J_2, \nabla).$
\end{theorem}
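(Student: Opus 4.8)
The plan is to follow the proof of Theorem \ref{hjp} almost verbatim, the only structural changes being that the real splitting $TM\oplus T^*M=V_1\oplus V_2$ there is replaced by the complex one $E^{\mathbb C}=V_1\oplus V_2=E^{1,0}_1\oplus E^{0,1}_1$, and that the relation $\hat J_2^2=I$ used in the para-quaternionic case becomes $\hat J_2^2=-I$ here, which injects factors of $\sqrt{-1}$ and extra signs that must be tracked. First I would assume that such a $D$ exists and derive a closed formula for it, which simultaneously yields uniqueness; then I would take that formula as a definition and verify that it produces a connection with the required properties.

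First I would pin down the mixed components. For $\sigma\in\Gamma^\infty(V_1)$ and $\tau\in\Gamma^\infty(V_2)$, Lemma \ref{1010} forces $D_\sigma\tau\in\Gamma^\infty(V_2)$ and $D_\tau\sigma\in\Gamma^\infty(V_1)$, so projecting the identity $[\sigma,\tau]_\nabla=D_\sigma\tau-D_\tau\sigma$ (coming from (ii)) onto $V_2$ and onto $V_1$ gives
\[
D_\sigma\tau=\hat J_1^+[\sigma,\tau]_\nabla,\qquad D_\tau\sigma=\hat J_1^-[\tau,\sigma]_\nabla .
\]
Next I would compute the pure components. Writing $\sigma=\sigma^-+\sigma^+$ and $\tau=\tau^-+\tau^+$ with $(\cdot)^-=\hat J_1^-(\cdot)\in V_1$ and $(\cdot)^+=\hat J_1^+(\cdot)\in V_2$, the mixed pieces $D_{\sigma^-}\tau^+$ and $D_{\sigma^+}\tau^-$ are already known. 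For a pure piece such as $D_{\sigma^-}\tau^-$ I would use $\hat J_2^2=-I$ to write $\tau^-=-\hat J_2(\hat J_2\tau^-)$, where $\hat J_2\tau^-\in V_2$ by Lemma \ref{1007}; since $D\hat J_2=0$ I can extract $\hat J_2$ from $D$ and reduce to a mixed term, and then use the anticommutation identity $\hat J_2\hat J_1^\mp=\hat J_1^\pm\hat J_2$ to arrive at
\[
D_\sigma\tau=\Big([\sigma^-,\tau^+]_\nabla-\hat J_2[\sigma^+,\hat J_2\tau^+]_\nabla\Big)^+ +\Big([\sigma^+,\tau^-]_\nabla-\hat J_2[\sigma^-,\hat J_2\tau^-]_\nabla\Big)^- ,
\]
which determines $D$ uniquely.

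I would then take this formula as the definition of $D$ and check the three requirements. That $D$ is $\mathbb R$-bilinear and satisfies the Leibniz rule is routine, inherited from the corresponding properties of $[\cdot,\cdot]_\nabla$. Reality, namely that $D$ restricts from $E^{\mathbb C}$ to $E$, follows because complex conjugation swaps $V_1\leftrightarrow V_2$ and hence interchanges the two bracketed expressions, while $\hat J_2$ and $[\cdot,\cdot]_\nabla$ are real; consequently $\overline{D_\sigma\tau}=D_\sigma\tau$ for real $\sigma,\tau$. For (i) I would verify $D\hat J_1=0$ by substituting $(\hat J_1\tau)^\mp=\pm\sqrt{-1}\,\tau^\mp$ from Lemma \ref{1007} into the formula and comparing with $\hat J_1(D_\sigma\tau)$, noting that $\hat J_1=-\sqrt{-1}$ on $V_2$ and $\hat J_1=\sqrt{-1}$ on $V_1$, so both sides agree; then verify $D\hat J_2=0$ by the companion substitution $(\hat J_2\tau)^\mp=\hat J_2\tau^\pm$ together with $\hat J_2^2=-I$, whence $D\hat J=D(\hat J_1\hat J_2)=(D\hat J_1)\hat J_2+\hat J_1(D\hat J_2)=0$. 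Finally (ii) is immediate: evaluating the formula at $\sigma=\sigma^-\in V_1$, $\tau=\tau^+\in V_2$ and using the antisymmetry of $[\cdot,\cdot]_\nabla$ together with $(\cdot)^++(\cdot)^-=\mathrm{id}$ yields $T^D(\sigma,\tau)=[\sigma^-,\tau^+]_\nabla-[\sigma^-,\tau^+]_\nabla=0$.

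The hard part will be the $\sqrt{-1}$-bookkeeping in the two computations above: the sign changes introduced by $\hat J_2^2=-I$ must be carried consistently through the pure-component reduction and through the verification of $D\hat J_1=D\hat J_2=0$, and the reality step, which has no analogue in the para-quaternionic Theorem \ref{hjp}, must be included to guarantee that the complex-linear connection on $E^{\mathbb C}$ genuinely descends to a real affine connection on $TM\oplus T^*M$.
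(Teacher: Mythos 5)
Your proposal is correct and follows essentially the same route as the paper: assume existence, use Lemma \ref{1010} and condition (ii) to pin down the mixed components $D_\sigma\tau=\hat J_1^+[\sigma,\tau]_\nabla$, reduce the pure components via $\tau^\pm=-\hat J_2^2\tau^\pm$ and $\hat J_2\hat J_1^\mp=\hat J_1^\pm\hat J_2$ to obtain the identical closed formula, then verify (i) and (ii) using Lemma \ref{1007}. Your explicit reality check (that conjugation swaps $V_1\leftrightarrow V_2$ so the complex-linear $D$ descends to $E$) is a welcome detail the paper leaves implicit in the phrase ``we can easily verify that $D$ is an affine connection on $TM\oplus T^*M$,'' but it does not change the argument's structure.
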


\begin{proof} We proceed essentially as in the para-quaternionic case, with the appropriate changes. Let us suppose such $D$ exists, let $\hat J_1^-, \hat J_1^+$ be the canonical projections on sections of $V_1$ and $V_2$.

Then, for any $\sigma \in \Gamma ^\infty (V_1)$ and any $\tau \in \Gamma ^\infty (V_2)$, we have
$$0=T^D(\sigma,\tau)=D_\sigma \tau-D_\tau \sigma-\hat J_1^-[\sigma,\tau]_\nabla-\hat J_1^+[\sigma,\tau]_\nabla.$$

By using Lemma \ref{1010} we get
$$D_\sigma \tau=\hat J_1^+[\sigma,\tau]_\nabla, \, \, D_\tau \sigma=\hat J_1^-[\tau,\sigma]_\nabla.$$

These conditions characterize $D$. Indeed, if $\sigma, \tau \in \Gamma ^\infty(TM\oplus T^*M)$, we decompose $\sigma=\sigma^+ +\sigma^-$ and $\tau=\tau^+ +\tau^-$ and we get
$$D_\sigma \tau=D_{\sigma^-}\tau^- +  D_{\sigma^-}\tau^+ + D_{\sigma^+}\tau^- + D_{\sigma^+}\tau^+.$$

Remark that
$$D_{\sigma^+}\tau^-=\hat J_1^-([\sigma^+,\tau^-]_\nabla)=([\sigma^+,\tau^-]_\nabla)^-,$$
$$D_{\sigma^-}\tau^+=\hat J_1^+([\sigma^-,\tau^+]_\nabla)=([\sigma^-,\tau^+]_\nabla)^+,$$
$$D_{\sigma^+}\tau^+=D_{\sigma^+}(-\hat J_2^2  \tau^+)=-\hat J_2(D_{\sigma^+}(\hat J_2 \tau^+))=-\hat J_2(D_{\sigma^+}(\hat J_2 \tau^+)^-)=$$$$=-\hat J_2([\sigma^+,\hat J_2 \tau^+)]_\nabla)^-,$$
$$D_{\sigma^-}\tau^-=D_{\sigma^-}(-\hat J_2^2 \tau^-)=-\hat J_2(D_{\sigma^-}(\hat J_2  \tau ^-))=-\hat J_2(D_{\sigma^-}(\hat J_2 \tau ^-)^+)=$$$$=-\hat J_2 ([\sigma^-,\hat J_2  \tau ^-]_\nabla)^+.$$

Then, by using the fact that $\hat J_2 \hat J_1^-=\hat J_1^+ \hat J_2$ and $\hat J_2 \hat J_1^+=\hat J_1^- \hat J_2$, we get
$$D_\sigma \tau=\hat J_1^+\Big([\sigma^-,\tau^+]_\nabla-\hat J_2 [\sigma^+,\hat J_2 \tau^+]_\nabla\Big)+
\hat J_1^-\Big([\sigma^+,\tau^-]_\nabla-\hat J_2 [\sigma^-,\hat J_2 \tau^-]_\nabla\Big)=$$
$$= \Big([\sigma^-,\tau^+]_\nabla-\hat J_2 [\sigma^+,\hat J_2 \tau^+]_\nabla\Big)^+ + \Big([\sigma^+,\tau^-]_\nabla-\hat J_2 [\sigma^-,\hat J_2 \tau^-]_\nabla\Big)^-.$$
Hence, conditions (i) and (ii) uniquely define the connection $D$.

Now we prove that such $D$ satisfies the two conditions. We can easily verify that $D$ is an affine connection on $TM \oplus T^*M$ and that  (i) holds. Moreover, by using Lemma \ref{1007}, for any  $\sigma, \tau \in \Gamma^\infty (TM \oplus T^*M)$, we have
$$D_\sigma (\hat J_1 \tau)=\Big([\sigma^-,(\hat J_1\tau)^+]_\nabla-\hat J_2 [\sigma^+,\hat J_2 (\hat J_1\tau)^+]_\nabla\Big)^+ +
 \Big([\sigma^+,(\hat J_1\tau)^-]_\nabla-\hat J_2 [\sigma^-,\hat J_2 (\hat J_1\tau)^-]_\nabla\Big)^-=$$
$$=-\sqrt{-1}\Big([\sigma^-,\tau^+]_\nabla+\hat J_2 [\sigma^+,\hat J_2 \tau^+]_\nabla\Big)^+ +\sqrt{-1} \Big([\sigma^+,-\tau^-]_\nabla+\hat J_2 [\sigma^-,-\hat J_2 \tau^-]_\nabla\Big)^-=$$
$$=\hat J_1\Big(\big([\sigma^-,\tau^+]_\nabla+\hat J_2 [\sigma^+,\hat J_2 \tau^+]_\nabla\big)^+ + \big([\sigma^+,\tau^-]_\nabla+\hat J_2 [\sigma^-,\hat J_2 \tau^-]_\nabla\big)^-\Big)=\hat J_1 (D_\sigma \tau),$$
thus, $D\hat J_1=0$. With a similar computation we get
$$D_\sigma (\hat J_2 \tau)=\Big([\sigma^-,(\hat J_2\tau)^+]_\nabla-\hat J_2 [\sigma^+,\hat J_2 (\hat J_2\tau)^+]_\nabla\Big)^+ +
 \Big([\sigma^+,(\hat J_2\tau)^-]_\nabla-\hat J_2 [\sigma^-,\hat J_2 (\hat J_2\tau)^-]_\nabla\Big)^-=$$
$$=\Big([\sigma^-,\hat J_2\tau^-]_\nabla+\hat J_2 [\sigma^+,\tau^-]_\nabla\Big)^+ +\Big([\sigma^+,\hat J_2\tau^+]_\nabla+\hat J_2 [\sigma^-,\tau^+]_\nabla\Big)^-=$$
$$=\hat J_2\Big(\Big(-\hat J_2[\sigma^-,\hat J_2\tau^-]_\nabla+ [\sigma^+,\tau^-]_\nabla\Big)^-+\Big(-\hat J_2[\sigma^+,\hat J_2\tau^+]_\nabla+ [\sigma^-,\tau^+]_\nabla\Big)^+\Big)=\hat J_2 (D_\sigma \tau),$$
thus, $D\hat J_2=0$.
Then the proof is complete.
\end{proof}

As an application we get the following.

\begin{proposition} Let $M$ be a smooth manifold, let $\nabla$ be an affine connection on $M$ and let $(\hat J_1, \hat J_2, \hat J)$ be a generalized quaternionic structure. Then the canonical connection is the generalized Obata connection.
\end{proposition}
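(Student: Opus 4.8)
The plan is to show that the generalized Obata connection $D$ defined by (\ref{1000}) satisfies the two characterizing properties of the canonical connection in Theorem \ref{HJP}, and then to conclude by the uniqueness asserted there. The starting observation is that $(\hat J_1,\hat J_2,\hat J)$ is a \emph{generalized quaternionic} structure, not merely an almost one, so by our conventions all three structures are $\nabla$-integrable; that is, $N^\nabla_{\hat J_1}=N^\nabla_{\hat J_2}=N^\nabla_{\hat J_3}=0$ (the vanishing of $N^\nabla_{\hat J_3}$ following also from Proposition \ref{i}).

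For the first property, $D\hat J_1=D\hat J_2=D\hat J=0$, I would simply invoke Proposition \ref{DDD}, which establishes precisely this for the generalized Obata connection and holds irrespective of integrability. For the second property, I would appeal to Proposition \ref{1006}, which expresses the torsion as $T^D(\sigma,\tau)=\tfrac{1}{6}\sum_{\alpha=1}^{3}N^\nabla_{\hat J_\alpha}(\sigma,\tau)$ for all $\sigma,\tau\in\Gamma^\infty(TM\oplus T^*M)$. Since every Nijenhuis tensor vanishes by integrability, the Obata connection is torsion-free; in particular $T^D(\sigma,\tau)=0$ for $\sigma\in\Gamma^\infty(V_1)$ and $\tau\in\Gamma^\infty(V_2)$, which is exactly condition (ii) of Theorem \ref{HJP} (indeed, a stronger statement).

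Having verified both (i) and (ii) for $D$, I would close the argument by the uniqueness part of Theorem \ref{HJP}: the two conditions determine the canonical connection of $(\hat J_1,\hat J_2,\nabla)$ uniquely among generalized affine connections on $TM\oplus T^*M$, so the canonical connection must coincide with the generalized Obata connection. The proof is thus a soft one, combining Propositions \ref{DDD} and \ref{1006} with the uniqueness theorem. The only point requiring a little care—and the place where a genuine check is hidden—is that the uniqueness of Theorem \ref{HJP} may legitimately be applied to $D$, which presupposes that the explicit formula (\ref{1000}) does define a bona fide generalized affine connection. This amounts to confirming $\mathbb{R}$-bilinearity together with the two Leibniz-type identities $D_{f\sigma}\tau=fD_\sigma\tau$ and $D_\sigma(f\tau)=\sigma(f)\tau+fD_\sigma\tau$, which can be checked directly from (\ref{1000}) using the derivation behaviour of the $\nabla$-bracket in each slot.
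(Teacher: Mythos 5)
Your proposal is correct and follows essentially the same route as the paper: the paper's proof likewise combines Proposition \ref{1006} (torsion-freeness under $\nabla$-integrability) with Proposition \ref{DDD} (parallelism of $\hat J_1$, $\hat J_2$, $\hat J_3$) and then invokes the uniqueness in Theorem \ref{HJP}. Your extra remark that one should verify formula (\ref{1000}) genuinely defines a generalized affine connection is a sound point of care, implicitly assumed in the paper.
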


\begin{proof} If $\hat J_1, \hat J_2, \hat J$ are $\nabla$-integrable then, from Proposition \ref{1006}, the generalized Obata connection is torsion-free and, from Proposition \ref{DDD}, it parallelizes the quaternionic structure, hence it satisfies the hypothesis of Theorem \ref{HJP}, in particular it is the canonical connection.
\end{proof}

\subsection{Examples}

\begin{proposition}
Let $(h,J_1)$ and $(h,J_2)$ be two Norden (or, a Norden and a para-Norden, or, two para-Norden) structures such that $J_1J_2=-J_2J_1$, and let $\nabla$ be the
canonical connection of $(J_1,J_2,J_1J_2)$. If $(h,\nabla)$ is a statistical structure, then $\hat J_1$ and $\hat J_2$ are $\nabla$-integrable,
i.e., $(\hat J_1,\hat J_2, \hat J_1\hat J_2)$ is a generalized quaternionic (respectively, generalized para-quaternionic) structure, where $\hat J_1$ and $\hat J_2$ are given either by (\ref{on}) or by (\ref{n}).
\end{proposition}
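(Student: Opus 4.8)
The plan is to reduce the statement to two earlier $\nabla$-integrability results, one for each of the two families (\ref{n}) and (\ref{on}), after extracting the right consequences of the hypotheses. First I would record what being the canonical connection of $(J_1,J_2,J_1J_2)$ provides: $\nabla$ is torsion-free, $T^\nabla=0$, and it parallelizes the structure, so that $\nabla J_1=\nabla J_2=0$. Next, since $(h,J_1)$ and $(h,J_2)$ are (para-)Norden structures \emph{without} the adjective ``almost'', each $J_i$ is $h$-symmetric and integrable, i.e. $N_{J_1}=N_{J_2}=0$, with $J_i^2=-I$ in the Norden case and $J_i^2=I$ in the para-Norden case. Finally, because $T^\nabla=0$, the defining quantity $(d^\nabla h)(X,Y,Z)=(\nabla_Xh)(Y,Z)-(\nabla_Yh)(X,Z)+h(T^\nabla(X,Y),Z)$ reduces to its first two terms, so that ``$(h,\nabla)$ statistical'' coincides with ``$(h,\nabla)$ quasi-statistical''; I will use the latter form.

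For the family (\ref{n}), each $\hat J_i$ satisfies the hypotheses of the earlier proposition asserting that $\left(\begin{smallmatrix} J & 0 \\ 0 & -J^*\end{smallmatrix}\right)$ is $\nabla$-integrable whenever $J$ is $h$-symmetric, $\nabla$-parallel and $N_J=0$; applying it to $J=J_1$ and to $J=J_2$ shows that $\hat J_1$ and $\hat J_2$ are $\nabla$-integrable (this step does not even require the statistical hypothesis). For the family (\ref{on}), the key observation is that each $\hat J_i$ is exactly one of the structures $\hat J_\mp$ of (\ref{op1}): since $J_i^2=\mp I$, the upper-right entry $-(J_i^2\pm I)h^{-1}$ of $\hat J_\mp$ vanishes, so $\hat J_i$ coincides with $\hat J_-$ in the Norden case, respectively $\hat J_+$ in the para-Norden case, built from $J=J_i$. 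I then invoke the earlier proposition guaranteeing that $\hat J_\mp$ is $\nabla$-integrable on a quasi-statistical manifold when $J$ is $h$-symmetric, $\nabla$-parallel and $N_J=0$; all of its hypotheses hold by the first paragraph, so once more $\hat J_1$ and $\hat J_2$ are $\nabla$-integrable.

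In both families it remains to pass to the third structure. Since $\hat J_1$ and $\hat J_2$ are $\nabla$-integrable, Proposition \ref{i} (two Norden, the quaternionic case) or Proposition \ref{ii} (a Norden and a para-Norden, or two para-Norden, the para-quaternionic case) yields that $\hat J_1\hat J_2$ is $\nabla$-integrable as well; in the two-para-Norden case one applies the ``moreover'' clause of Proposition \ref{ii}, since there the generalized almost complex member of the triple is the product $\hat J_1\hat J_2$. Dropping the adjective ``almost'', $(\hat J_1,\hat J_2,\hat J_1\hat J_2)$ is therefore a generalized quaternionic, respectively para-quaternionic, structure. I expect the only genuine subtlety to be the two bookkeeping identifications: that the statistical hypothesis enters only to supply the quasi-statistical condition needed for the (\ref{on}) family, and that (\ref{on}) is the degenerate instance of (\ref{op1}) in which the algebraic relation $J_i^2=\mp I$ annihilates the off-diagonal term; everything else is a direct appeal to the cited results.
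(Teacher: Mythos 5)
Your proof is correct and takes essentially the same route as the paper, whose own proof is the one-line observation that $N_{J_1}=N_{J_2}=0$ implies $N^{\nabla}_{\hat J_1}=N^{\nabla}_{\hat J_2}=0$ --- that is, an implicit appeal to exactly the earlier integrability propositions you cite, with the canonical connection supplying $T^{\nabla}=0$ and $\nabla J_1=\nabla J_2=0$, and the statistical hypothesis supplying the quasi-statistical condition needed for the family (\ref{on}). Your additional bookkeeping (identifying (\ref{on}) with the degenerate instance of $\hat J_{\mp}$ from (\ref{op1}) via $J_i^2=\mp I$, noting the statistical hypothesis is not needed for the family (\ref{n}), and passing to $\hat J_1\hat J_2$ through Propositions \ref{i} and \ref{ii}) merely makes explicit what the paper leaves implicit.
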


\begin{proof}
It follows from the fact that $N_{J_1}=N_{J_2}=0$ implies $N_{\hat J_1}^{\nabla}=N_{\hat J_2}^{\nabla}=0$.
\end{proof}

Based on this result, we shall prove the following.
\begin{proposition}
Let $(h,J_1)$ and $(h,J_2)$ be two Norden (respectively, a Norden and a para-Norden, respectively, two para-Norden) structures on $M$ such that $J_1J_2=-J_2J_1$, and let $\nabla$ be the canonical connection of $(J_1,J_2,J_1J_2)$. If $(h,\nabla)$ is a statistical structure, then
the canonical connection of $(\hat J_1,\hat J_2, \hat J_1\hat J_2)$ is the dual connection $\hat \nabla^*$ given by (\ref{22}),
%is the unique torsion-free generalized affine connection such that $\hat \nabla^* \hat J_1=\hat \nabla^* \hat J_2=0$,
where $\hat J_1$ and $\hat J_2$ are given either by (\ref{on}) or by (\ref{n}).
\end{proposition}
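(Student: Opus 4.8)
The plan is to identify $\hat\nabla^*$ with the canonical connection $D$ supplied by Theorem~\ref{hjp} (in the para-quaternionic cases, namely a Norden and a para-Norden structure, or two para-Norden structures) or by Theorem~\ref{HJP} (in the quaternionic case of two Norden structures), and to conclude by the uniqueness asserted there. By the preceding proposition the hypotheses force $N_{J_1}=N_{J_2}=0$, hence $N^\nabla_{\hat J_1}=N^\nabla_{\hat J_2}=0$, so $(\hat J_1,\hat J_2,\hat J_1\hat J_2)$ is $\nabla$-integrable; by the theorem asserting the equivalence of $[\hat J_1,\hat J_2]_\nabla=0$, torsion-freeness of $D$, and $\nabla$-integrability of $\hat J_1,\hat J_2$, the canonical connection $D$ is then genuinely torsion-free. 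Consequently $D$ is the unique torsion-free generalized affine connection parallelizing $\hat J_1$, $\hat J_2$ and $\hat J_1\hat J_2$: any such connection satisfies condition (i) (it parallelizes) and condition (ii) (full torsion-freeness implies the partial torsion-freeness on $V_1\times V_2$), so it must equal $D$. It therefore suffices to show that $\hat\nabla^*$ enjoys these two features.

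For torsion-freeness, note that a statistical structure is in particular quasi-statistical, so by the torsion formula for $\hat\nabla^*$ recorded after Proposition~\ref{p2} we get $T^{\hat\nabla^*}=0$. For the parallelization, recall that $\nabla$ is the canonical connection of the underlying triple $(J_1,J_2,J_1J_2)$, so it parallelizes each factor, i.e. $\nabla J_1=\nabla J_2=0$. For the structures~(\ref{n}) this gives $\hat\nabla^*\hat J_1=\hat\nabla^*\hat J_2=0$ at once from the final proposition of Section~3 applied to $J_1$ and to $J_2$; for the structures~(\ref{on}) a short computation paralleling the proof of Proposition~\ref{mm3} yields
$$(\hat\nabla^*_{X+\eta}\hat J_i)(Y+\beta)=h^{-1}\big((\nabla_XJ_i^*)(h(Y))\big)-(\nabla_XJ_i^*)\beta,$$
which vanishes because $\nabla J_i=0$ (equivalently $\nabla J_i^*=0$). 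In either realization $\hat\nabla^*\hat J_1=\hat\nabla^*\hat J_2=0$, and by the Leibniz rule $\hat\nabla^*(\hat J_1\hat J_2)=0$, so $\hat\nabla^*$ parallelizes the whole generalized structure. Since $\hat\nabla^*$ is torsion-free and parallelizing, the uniqueness above gives $\hat\nabla^*=D$.

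The point that most needs care is the input $\nabla J_1=\nabla J_2=0$: one must justify that the classical canonical connection of $(J_1,J_2,J_1J_2)$ — the Obata connection in the Norden case and its para-analogue otherwise — parallelizes each factor individually, since this fact underlies both the integrability borrowed from the preceding proposition and the explicit evaluation of $\hat\nabla^*\hat J_i$ above. Once this is in hand the argument is essentially bookkeeping: one checks that the two conditions just verified for $\hat\nabla^*$ are exactly conditions (i) and (ii) in the statement of Theorem~\ref{hjp}, respectively Theorem~\ref{HJP}, so that the uniqueness clause there applies verbatim and no gap remains.
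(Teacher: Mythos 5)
Your proposal is correct and follows essentially the paper's own route: the paper likewise computes $(\hat \nabla^*_{X+\eta}\hat J_i)(Y+\beta)=h^{-1}\big((\nabla_XJ_i^*)(h(Y))\big)-(\nabla_XJ_i^*)\beta$ in both realizations, uses $\big((\nabla_XJ_i^*)\beta\big)(Y)=\beta\big((\nabla_XJ_i)Y\big)$ together with $\nabla J_1=\nabla J_2=0$ (the defining property of the Obata-type canonical connection) and the vanishing of $T^{\hat\nabla^*}$ on a (quasi-)statistical manifold, and concludes by the uniqueness clause of Theorems \ref{hjp} and \ref{HJP}. Your preliminary detour through $\nabla$-integrability and full torsion-freeness of $D$ is not needed for that uniqueness argument (condition (ii) only requires the partial vanishing on $\Gamma^\infty(V_1)\times\Gamma^\infty(V_2)$, which you correctly note follows from $T^{\hat\nabla^*}=0$), but it is harmless and everything you assert is justified by results in the paper.
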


\begin{proof}
By a direct computation, in both of the two cases, we get for any $X,Y\in \Gamma(TM)$ and $\eta,\beta\in \Gamma(T^*M)$
\[
(\hat \nabla^*_{X+\eta} \hat J_i)(Y+\beta):=\hat \nabla^*_{X+\eta}(\hat J_i(Y+\beta))-\hat J_i(\hat \nabla^*_{X+\eta}Y+\beta)=
\]
\[
=h^{-1}((\nabla_XJ_i^*)(h(Y)))-(\nabla_XJ_i^*)\beta, \ i=1,2.
\]

Now taking into account that
\[
\big((\nabla_XJ_i^*)\eta\big)(Y):=(\nabla_X(J_i^*\eta))(Y)-J_i^*(\nabla_X\eta)(Y):=
\]
\[
:=X((J_i^*\eta)(Y))-(J_i^*\eta)(\nabla_XY)-(\nabla_X\eta)(JY)
=X(\eta(JY))-\eta(J(\nabla_XY))-(\nabla_X\eta)(JY)=
\]
\[
=X(\eta(JY))+\eta((\nabla_XJ)Y)-\eta(\nabla_XJY)-(\nabla_X\eta)(JY)
=\eta\big((\nabla_XJ_i)Y\big),
\]
for any $X,Y\in \Gamma(TM)$ and $\eta\in \Gamma(T^*M)$, we get the conclusion from Theorems \ref{hjp} and Theorems \ref{HJP}.
\end{proof}

\subsection{Families of triple structures}

Finally we show that the generalized affine connection that parallelizes certain families of generalized complex and product structures is preserved.

\begin{proposition} \label{kjh}
Let $(\hat J, \hat K,\hat J\hat K)$ be a generalized almost para-quaternionic structure and let $a,b\in \mathbb R$. Then

(i) $\hat J_{a,b}:=a\hat J+b\hat J\hat K$ is a generalized almost complex structure if $a^2-b^2=1$ and a generalized almost product structure if $b^2-a^2=1$;

(ii) $\hat K_{a,b}:=a\hat K+b\hat J\hat K$ is a generalized almost product structure if $a^2+b^2=1$.
\end{proposition}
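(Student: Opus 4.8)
The plan is to compute the squares $\hat J_{a,b}^2$ and $\hat K_{a,b}^2$ directly by expanding and exploiting the anticommutation relations built into the para-quaternionic structure. First I would record the defining identities $\hat J^2=-I$, $\hat K^2=I$, $(\hat J\hat K)^2=I$, and $\hat J\hat K=-\hat K\hat J$, together with the two consequences noted in the preceding Remark, namely that $\hat J$ anticommutes with $\hat J\hat K$ and that $\hat K$ anticommutes with $\hat J\hat K$, i.e.
\[
\hat J(\hat J\hat K)=-(\hat J\hat K)\hat J,\qquad \hat K(\hat J\hat K)=-(\hat J\hat K)\hat K.
\]

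For part (i), I would expand
\[
\hat J_{a,b}^2=a^2\hat J^2+ab\big(\hat J(\hat J\hat K)+(\hat J\hat K)\hat J\big)+b^2(\hat J\hat K)^2.
\]
The crucial observation is that the middle cross-term vanishes, precisely because $\hat J$ and $\hat J\hat K$ anticommute. Substituting $\hat J^2=-I$ and $(\hat J\hat K)^2=I$, the expression collapses to $\hat J_{a,b}^2=(b^2-a^2)I$, from which both claims follow at once: $\hat J_{a,b}^2=-I$ exactly when $a^2-b^2=1$, so $\hat J_{a,b}$ is then a generalized almost complex structure, while $\hat J_{a,b}^2=I$ exactly when $b^2-a^2=1$, so $\hat J_{a,b}$ is then a generalized almost product structure.

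For part (ii), the same strategy applies. Expanding
\[
\hat K_{a,b}^2=a^2\hat K^2+ab\big(\hat K(\hat J\hat K)+(\hat J\hat K)\hat K\big)+b^2(\hat J\hat K)^2,
\]
the cross-term again cancels, this time because $\hat K$ and $\hat J\hat K$ anticommute, and using $\hat K^2=I$ and $(\hat J\hat K)^2=I$ one is left with $\hat K_{a,b}^2=(a^2+b^2)I$. Hence $\hat K_{a,b}$ is a generalized almost product structure precisely when $a^2+b^2=1$.

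The computation is entirely routine; the only step demanding genuine care is verifying the signs of the two anticommutation relations inherited from the Remark, since it is exactly the sign that forces the cross-terms to cancel rather than reinforce. Were $\hat J$ (or $\hat K$) to commute with $\hat J\hat K$, the middle term would double instead of vanish and the clean scalar conditions on $a,b$ would be lost; so this sign check is where the whole argument hinges.
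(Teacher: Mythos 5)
Your proof is correct and takes essentially the same route as the paper: a direct expansion of $\hat J_{a,b}^2$ and $\hat K_{a,b}^2$ in which the cross-terms vanish by the anticommutation relations inherited from $\hat J\hat K=-\hat K\hat J$, yielding $\hat J_{a,b}^2=(b^2-a^2)I=-(a^2-b^2)I$ and $\hat K_{a,b}^2=(a^2+b^2)I$, exactly the identities the paper records. The only difference is expository: the paper compresses the argument into ``by direct computations,'' while you spell out the sign check $\hat J(\hat J\hat K)=-(\hat J\hat K)\hat J$ and $\hat K(\hat J\hat K)=-(\hat J\hat K)\hat K$ on which the cancellation hinges.
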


\begin{proof}
By direct computations and taking into account that $\hat J\hat K=-\hat K\hat J$, we consequently get
\[
\hat J_{a,b}^2=-(a^2-b^2)I, \ \ \hat K_{a,b}^2=(a^2+b^2)I,
\]
hence the conclusions.
\end{proof}

As consequences from Proposition \ref{kjh}, we obtain

\begin{corollary}
If $(\hat J,\hat K,\hat J\hat K)$ is a generalized almost para-quaternionic structure, then

(i) $\hat J^{\pm}_{\theta}:=\cosh \theta \cdot \hat J\pm \sinh \theta \cdot \hat J\hat K$ is a generalized almost complex structure, for any $\theta \in \mathbb R$;

(ii) $\hat K^{\pm}_{1, \theta}:=\sinh \theta \cdot \hat J\pm \cosh \theta \cdot \hat J\hat K$, \ \ \ $\hat K^{\pm}_{2, \theta}:=\cos \theta \cdot \hat J\pm \sin \theta \cdot \hat J\hat K$ \ \ \ and \linebreak $\hat K^{\pm}_{3, \theta}:=\sin \theta \cdot \hat J\pm \cos \theta \cdot \hat J\hat K$ are generalized almost product structures, for any $\theta \in \mathbb R$.
\end{corollary}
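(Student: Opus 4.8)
The plan is to recognize each of the five proposed structures as a special instance of the two templates $\hat J_{a,b}=a\hat J+b\hat J\hat K$ and $\hat K_{a,b}=a\hat K+b\hat J\hat K$ appearing in Proposition \ref{kjh}, so that the squaring identities $\hat J_{a,b}^2=-(a^2-b^2)I$ and $\hat K_{a,b}^2=(a^2+b^2)I$ already established there do all of the algebraic work. With this reduction in place, proving the corollary amounts, for each family, to reading off the scalar coefficients $a$ and $b$ and checking a single scalar condition; the only external facts needed are the identities $\cosh^2\theta-\sinh^2\theta=1$ and $\sin^2\theta+\cos^2\theta=1$. I note at the outset that in every template $b$ enters only through $b^2$, so the two sign choices $\pm$ are handled simultaneously and require no separate treatment.

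For part (i) I would write $\hat J^{\pm}_{\theta}=\hat J_{a,b}$ with $a=\cosh\theta$ and $b=\pm\sinh\theta$. Then $a^2-b^2=\cosh^2\theta-\sinh^2\theta=1$, so $(\hat J^{\pm}_{\theta})^2=-I$ and the first clause of Proposition \ref{kjh}(i) gives that $\hat J^{\pm}_{\theta}$ is a generalized almost complex structure for every $\theta\in\mathbb R$.

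For part (ii) I would split into the hyperbolic and the trigonometric cases. The family $\hat K^{\pm}_{1,\theta}=\hat J_{a,b}$ with $a=\sinh\theta$ and $b=\pm\cosh\theta$ falls under the second clause of Proposition \ref{kjh}(i): here $b^2-a^2=\cosh^2\theta-\sinh^2\theta=1$, hence $(\hat K^{\pm}_{1,\theta})^2=I$ and it is a generalized almost product structure. The two trigonometric families are instead matched to the template $\hat K_{a,b}$, with $(a,b)=(\cos\theta,\pm\sin\theta)$ and $(a,b)=(\sin\theta,\pm\cos\theta)$ respectively; in both cases $a^2+b^2=\cos^2\theta+\sin^2\theta=1$, so Proposition \ref{kjh}(ii) yields $(\hat K^{\pm}_{i,\theta})^2=I$, i.e.\ they are generalized almost product structures.

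I do not expect a substantial obstacle here: the argument is essentially bookkeeping once Proposition \ref{kjh} is granted. The single point that genuinely requires care is correctly matching each family to its template, because the opposite signs in $\hat J_{a,b}^2=-(a^2-b^2)I$ versus $\hat K_{a,b}^2=+(a^2+b^2)I$ are exactly what determine whether the relevant reduction is to the hyperbolic identity $\cosh^2\theta-\sinh^2\theta=1$ or to the Pythagorean identity $\sin^2\theta+\cos^2\theta=1$. Once the coefficients and the template are pinned down, each conclusion is immediate.
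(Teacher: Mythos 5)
Your reduction is exactly the paper's: the corollary is stated there as an immediate consequence of Proposition \ref{kjh}, with no separate proof beyond reading off $(a,b)$, and your treatment of part (i) ($a=\cosh\theta$, $b=\pm\sinh\theta$, $a^2-b^2=1$) and of $\hat K^{\pm}_{1,\theta}$ ($a=\sinh\theta$, $b=\pm\cosh\theta$, $b^2-a^2=1$) is correct and complete, as is your observation that $b$ enters only quadratically (the cross terms cancel by anticommutativity of $\hat J$ and $\hat J\hat K$), so both signs are covered at once.

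There is, however, one point you passed over silently. The trigonometric families are \emph{printed} in the corollary as $\hat K^{\pm}_{2,\theta}:=\cos\theta\cdot\hat J\pm\sin\theta\cdot\hat J\hat K$ and $\hat K^{\pm}_{3,\theta}:=\sin\theta\cdot\hat J\pm\cos\theta\cdot\hat J\hat K$, i.e., with first term $\hat J$, not $\hat K$. Taken literally, these are instances of the template $\hat J_{a,b}=a\hat J+b\hat J\hat K$, whose square is $-(a^2-b^2)I$; one computes $(\hat K^{\pm}_{2,\theta})^2=-\cos 2\theta\cdot I$ and $(\hat K^{\pm}_{3,\theta})^2=\cos 2\theta\cdot I$, so the literal statement fails for general $\theta$. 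Your matching of these families to the template $\hat K_{a,b}=a\hat K+b\hat J\hat K$ with $a^2+b^2=1$ is valid only after replacing $\hat J$ by $\hat K$ in the first summand; that replacement is evidently what the authors intend (the labels $\hat K_{i,\theta}$ and the condition $a^2+b^2=1$ in Proposition \ref{kjh}(ii) both point to it), so you have in effect proved the corrected statement, but you should flag the typo rather than let the template substitution happen without comment --- as written, your appeal to Proposition \ref{kjh}(ii) does not apply to the displayed operators.
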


Concerning the associated connections, we prove the following result.

\begin{proposition}
Let $(\hat J, \hat K,\hat J\hat K)$ be a generalized para-quaternionic structure on $M$, let $(\hat J_{a,b},\hat K)_{a,b\in \mathbb R, \ a^2-b^2=\pm 1}$ and let $(\hat J, \hat K_{a,b})_{a,b\in \mathbb R, \ a^2+b^2=1}$ as above.
Then the canonical connections of $(\hat J,\hat K)$, $(\hat J_{a,b},\hat K)$ and $(\hat J, \hat K_{a,b})$ coincide.
\end{proposition}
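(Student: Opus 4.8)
The plan is to argue by the uniqueness clause of Theorem \ref{hjp}. Writing $D$ for the canonical connection of $(\hat J,\hat K,\nabla)$, it suffices to verify that $D$ itself satisfies the two properties (i) and (ii) characterizing the canonical connections of $(\hat J_{a,b},\hat K,\nabla)$ and of $(\hat J,\hat K_{a,b},\nabla)$; uniqueness then forces all three to agree with $D$. First one must check that these reparametrized triples really are generalized almost para-quaternionic, so that Theorem \ref{hjp} applies to them: by Proposition \ref{kjh} the relevant members are of the correct complex/product type, and a short computation with $\hat J^2=-I$, $\hat K^2=I$, $\hat J\hat K=-\hat K\hat J$ gives $\hat J_{a,b}\hat K=-\hat K\hat J_{a,b}$ and $\hat J\hat K_{a,b}=-\hat K_{a,b}\hat J$, so the required anticommutation holds.

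Next I would establish property (i). Since $D\hat J=D\hat K=0$ and $D$ obeys the Leibniz rule for composition of endomorphisms, $D(\hat J\hat K)=(D\hat J)\hat K+\hat J(D\hat K)=0$. As $a,b$ are constants, each of $\hat J_{a,b}=a\hat J+b\hat J\hat K$ and $\hat K_{a,b}=a\hat K+b\hat J\hat K$, together with the respective third members $\hat J_{a,b}\hat K=b\hat J+a\hat J\hat K$ and $\hat J\hat K_{a,b}=-b\hat K+a\hat J\hat K$, is a constant-coefficient combination of $\hat J,\hat K,\hat J\hat K$; hence $D$ annihilates all of them, which is precisely property (i) for both reparametrized triples.

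The decisive observation is that property (ii) comes for free. Because $(\hat J,\hat K,\hat J\hat K)$ is a genuine (i.e. integrable, the adjective ``almost'' being dropped) generalized para-quaternionic structure, its two product members $\hat K$ and $\hat J\hat K$ are $\nabla$-integrable, and by the theorem characterizing torsion-freeness of the canonical connection this forces $T^D=0$ identically. A torsion-free $D$ then trivially satisfies the vanishing requirement $T^D|_{V_1\times V_2}=0$ for \emph{every} eigenbundle decomposition whatsoever, in particular for the splitting attached to $\hat K$ in $(\hat J_{a,b},\hat K)$ and to $\hat K_{a,b}$ in $(\hat J,\hat K_{a,b})$. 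Thus $D$ meets both defining conditions of each of the other two canonical connections, and uniqueness in Theorem \ref{hjp} yields the asserted coincidence.

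The only place where real care is needed is this last point: it is the integrability hypothesis, hidden in the absence of ``almost'', that collapses the torsion condition and makes the choice of generating structures irrelevant. Were one to attempt the statement for merely \emph{almost} para-quaternionic structures, property (ii) would no longer be automatic for $(\hat J,\hat K_{a,b})$, since the reference product structure rotates from $\hat K$ to $\hat K_{a,b}$ and the eigenbundles $V_\pm(\hat K_{a,b})$ differ from $V_\pm(\hat K)$; one would then have to write $V_\pm(\hat K_{a,b})$ as graphs over $V_\pm(\hat K)$ and use Lemma \ref{I} to re-express $T^D$ on the rotated splitting, the obstacle being to show the resulting contributions cancel under $a^2+b^2=1$. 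In the integrable setting at hand this difficulty disappears entirely.
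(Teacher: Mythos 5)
Your proposal is correct, and its skeleton is the same as the paper's: fix the canonical connection $D$ of $(\hat J,\hat K,\nabla)$, show that $D$ also parallelizes the rotated structures, and conclude by the uniqueness clause of Theorem \ref{hjp}. The paper's own proof consists of exactly the Leibniz computation $(D_{X+\eta}\hat J_{a,b})(Y+\beta)=a(D_{X+\eta}\hat J)(Y+\beta)+b\big((D_{X+\eta}\hat J)(\hat K(Y+\beta))+\hat J((D_{X+\eta}\hat K)(Y+\beta))\big)$, its analogue for $\hat K_{a,b}$, and the sentence ``the conclusion follows from Theorem \ref{hjp}''; you reach the same parallelism more transparently by noting that $\hat J_{a,b}$, $\hat K_{a,b}$ and the third members $b\hat J+a\hat J\hat K$, $-b\hat K+a\hat J\hat K$ are constant-coefficient combinations of $\hat J$, $\hat K$, $\hat J\hat K$, all annihilated by $D$. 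Where you genuinely improve on the paper is condition (ii) of Theorem \ref{hjp}: for the pair $(\hat J,\hat K_{a,b})$ the reference product structure rotates, so the eigenbundle splitting $V_{\pm}(\hat K_{a,b})$ is not the splitting $V_{\pm}(\hat K)$ with respect to which $D$ was constructed, and the partial-torsion requirement is therefore not automatic from the definition of $D$; the paper's proof is silent on this point. Your observation that the hypothesis ``para-quaternionic'' without ``almost'' means $\nabla$-integrable, so that the equivalence theorem following Lemma \ref{I} (integrability of the structures $\Longleftrightarrow$ $T^D=0$) makes the torsion condition vacuous for \emph{every} splitting, is precisely what closes this gap, and your concluding remark correctly identifies that in the merely almost case the argument for $(\hat J,\hat K_{a,b})$ would indeed require comparing the rotated eigenbundles with the original ones. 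One small caveat: the equivalence theorem is stated for the two structures generating $D$, so one should note (as Proposition \ref{ii} guarantees) that integrability of the triple gives integrability of whichever two members play that role; this is immediate, and does not affect the validity of your argument.
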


\begin{proof}
Let $D $ be the canonical connection of $(\hat J,\hat K,\hat J\hat K)$.
Then, for any $X,Y\in \Gamma(TM)$ and $\eta,\beta\in \Gamma(T^*M)$, we have
\[
(D _{X+\eta}\hat J_{a,b})(Y+\beta):=D _{X+\eta}(\hat J_{a,b}(Y+\beta))-\hat J_{a,b}(D _{X+\eta}Y+\beta)=
\]
\[
=a(D _{X+\eta}\hat J)(Y+\beta)+b\Big((D _{X+\eta}\hat J)(\hat K(Y+\beta))+\hat J((D _{X+\eta}\hat K)(Y+\beta))\Big)
\]
and
\[
(D _{X+\eta}\hat K_{a,b})(Y+\beta):=D _{X+\eta}(\hat K_{a,b}(Y+\beta))-\hat K_{a,b}(D _{X+\eta}Y+\beta)=
\]
\[
=a(D _{X+\eta}\hat K)(Y+\beta)+b\Big((D _{X+\eta}\hat J)(\hat K(Y+\beta))+\hat J((D _{X+\eta}\hat K)(Y+\beta))\Big),
\]
and the conclusion follows from Theorem \ref{hjp}.
\end{proof}

Similarly, we will obtain the following.

\begin{proposition} \label{kjh1}
(i) Let $(\hat J_1, \hat J_2,\hat J_1\hat J_2)$ be a generalized almost quaternionic structure and let $a,b\in \mathbb R$. Then $\hat J_{a,b}:=a\hat J_1+b\hat J_1\hat J_2$ is a generalized almost complex structure if $a^2+b^2=1$.

(ii) Let $(\hat K_1, \hat K_2,\hat K_1\hat K_2)$ be a generalized almost para-quaternionic structure and let $a,b\in \mathbb R$. Then $\hat K_{a,b}:=a\hat K_1+b\hat K_1\hat K_2$ is a generalized almost complex structure if $b^2-a^2=1$ and a generalized almost product structure if $a^2-b^2=1$.
\end{proposition}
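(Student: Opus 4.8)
The plan is to follow verbatim the direct-computation scheme used for Proposition \ref{kjh}: in each part the structure whose type we wish to identify is a real linear combination, $a\hat J_1+b\hat J_1\hat J_2$ in (i) and $a\hat K_1+b\hat K_1\hat K_2$ in (ii), of two generalized structures that anticommute, so I would simply square it, invoke anticommutativity to annihilate the cross terms, and read off the type from the sign of the resulting scalar multiple of $I$.

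First, for (i), I would recall that in a generalized almost quaternionic structure $(\hat J_1,\hat J_2,\hat J_1\hat J_2)$ both $\hat J_1$ and $\hat J_3:=\hat J_1\hat J_2$ are generalized almost complex structures, hence $\hat J_1^2=\hat J_3^2=-I$, and that they anticommute, $\hat J_1\hat J_3=-\hat J_3\hat J_1$, by the Remark following the definition of the triple structures. Writing $\hat J_{a,b}=a\hat J_1+b\hat J_3$ and expanding, the cross term $ab(\hat J_1\hat J_3+\hat J_3\hat J_1)$ drops out, so that
\[
\hat J_{a,b}^2=a^2\hat J_1^2+b^2\hat J_3^2=-(a^2+b^2)\,I;
\]
thus $\hat J_{a,b}^2=-I$ exactly when $a^2+b^2=1$, which gives the asserted generalized almost complex structure.

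For (ii) I would run the identical computation, the only change being the squares of the two summands. In a generalized almost para-quaternionic structure $(\hat K_1,\hat K_2,\hat K_1\hat K_2)$ the first entry $\hat K_1$ is complex while $\hat K_3:=\hat K_1\hat K_2$ is a product structure, so here $\hat K_1^2=-I$ but $\hat K_3^2=I$, and again $\hat K_1\hat K_3=-\hat K_3\hat K_1$ by the Remark. For $\hat K_{a,b}=a\hat K_1+b\hat K_3$ the cross terms cancel exactly as before, and
\[
\hat K_{a,b}^2=a^2\hat K_1^2+b^2\hat K_3^2=(b^2-a^2)\,I,
\]
whence the two cases follow by matching $(b^2-a^2)I$ to $-I$ and to $I$.

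I do not expect any genuine obstacle: the whole argument is the one-line expansion above, and the single nontrivial ingredient is the anticommutativity of the relevant pair, taken from the Remark after the definition of the triple structures. The only point requiring care is the bookkeeping of the squares --- $-I$ for a complex factor, $+I$ for a product factor --- which is exactly what separates the quaternionic case (i), where both summands square to $-I$, from the para-quaternionic case (ii), where the two summands square to opposite signs.
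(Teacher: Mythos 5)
Part (i) of your proof is correct and coincides with the paper's one-line computation. In part (ii), however, your final step contradicts your own computation. You read the hypothesis literally, taking $\hat K_1$ to be the complex member of the para-quaternionic triple ($\hat K_1^2=-I$, $(\hat K_1\hat K_2)^2=I$), and you correctly obtain $\hat K_{a,b}^2=(b^2-a^2)I$. But matching $(b^2-a^2)I$ to $-I$ gives $b^2-a^2=-1$, i.e.\ $a^2-b^2=1$; so under your reading $\hat K_{a,b}$ is a generalized almost complex structure when $a^2-b^2=1$ and a generalized almost product structure when $b^2-a^2=1$ --- exactly the \emph{opposite} pairing of conditions from the one asserted in the statement. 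Your closing phrase ``whence the two cases follow by matching'' glosses over this sign clash instead of resolving it: as written, your argument proves the swapped version of (ii), not (ii) itself.

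The paper's proof computes $\hat K_{a,b}^2=(a^2-b^2)I$, which corresponds to reading the hypothesis so that $\hat K_1$ and $\hat K_2$ are the two anticommuting generalized almost \emph{product} structures of the triple ($\hat K_1^2=\hat K_2^2=I$), making $\hat K_1\hat K_2$ the complex member with $(\hat K_1\hat K_2)^2=-I$; the cross term vanishes as in your argument, $\hat K_{a,b}^2=a^2I-b^2I$, and the stated conditions ($b^2-a^2=1$ for complex, $a^2-b^2=1$ for product) then do follow. This is the reading consistent both with the statement and with the corollary immediately after it, where $\hat K_1,\hat K_2$ are explicitly two anticommuting generalized almost product structures and $(a,b)=(\sinh\theta,\pm\cosh\theta)$ or $(\cosh\theta,\pm\sinh\theta)$. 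To be fair, the paper's own definition places the complex structure first in a para-quaternionic triple, so your literal reading of ``$(\hat K_1,\hat K_2,\hat K_1\hat K_2)$ para-quaternionic'' is defensible; but in that case the honest conclusion of your computation is that the statement as printed is incompatible with it, and you should have flagged the discrepancy rather than asserting the result. As submitted, part (ii) contains an unresolved sign error and does not establish the statement.
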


\begin{proof}
By direct computations, and taking into account that $\hat J_1\hat J_2=-\hat J_2\hat J_1$ and $\hat K_1\hat K_2=-\hat K_2\hat K_1$, we consequently get
\[
\hat J_{a,b}^2=-(a^2+b^2)I, \ \ \hat K_{a,b}^2=(a^2-b^2)I,
\]
hence the conclusions.
\end{proof}

As consequences from Proposition \ref{kjh1}, we obtain the following two corollaries.

\begin{corollary}
Let $\hat J_1$ and $\hat J_2$ be two anticommuting generalized almost complex structures. Then
$\hat J^{\pm}_{1,\theta}:=\cos \theta \cdot \hat J_1\pm \sin \theta \cdot \hat J_1\hat J_2$
and $\hat J^{\pm}_{2,\theta}:=\sin \theta \cdot \hat J_1\pm \cos \theta \cdot \hat J_1\hat J_2$ are generalized almost complex structures, for any $\theta \in \mathbb R$.
\end{corollary}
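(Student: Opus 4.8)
The plan is to read this off directly from Proposition \ref{kjh1}(i), since each of the four listed tensors has exactly the shape $a\hat J_1+b\hat J_1\hat J_2$ with coefficients satisfying $a^2+b^2=1$ by the Pythagorean identity. The whole argument is a formal specialization of the preceding proposition, so no genuinely new input is needed.

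First I would verify that the hypothesis of Proposition \ref{kjh1}(i) is met, namely that $(\hat J_1,\hat J_2,\hat J_1\hat J_2)$ is a generalized almost quaternionic structure. Since $\hat J_1$ and $\hat J_2$ are generalized almost complex structures with $\hat J_1\hat J_2=-\hat J_2\hat J_1$, it only remains to check that $\hat J_1\hat J_2$ is again a generalized almost complex structure; indeed
$$(\hat J_1\hat J_2)^2=\hat J_1\hat J_2\hat J_1\hat J_2=-\hat J_1^2\hat J_2^2=-(-I)(-I)=-I,$$
so the triple $(\hat J_1,\hat J_2,\hat J_1\hat J_2)$ is generalized almost quaternionic and Proposition \ref{kjh1}(i) applies.

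Next I would apply Proposition \ref{kjh1}(i) to each of the four tensors in turn. For $\hat J^{\pm}_{1,\theta}=\cos\theta\cdot\hat J_1\pm\sin\theta\cdot\hat J_1\hat J_2$ one takes $a=\cos\theta$ and $b=\pm\sin\theta$, so that $a^2+b^2=\cos^2\theta+\sin^2\theta=1$; for $\hat J^{\pm}_{2,\theta}=\sin\theta\cdot\hat J_1\pm\cos\theta\cdot\hat J_1\hat J_2$ one takes $a=\sin\theta$ and $b=\pm\cos\theta$, so that again $a^2+b^2=1$. In every case the condition $a^2+b^2=1$ required by Proposition \ref{kjh1}(i) is satisfied, hence each of these four tensors is a generalized almost complex structure, for every $\theta\in\mathbb R$.

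There is essentially no obstacle here: the only thing one must recognize is that the trigonometric coefficients automatically meet the norm condition $a^2+b^2=1$, and the single computational input, that the product of two anticommuting generalized almost complex structures squares to $-I$, reuses exactly the identity $\hat J_1\hat J_2\hat J_1\hat J_2=-\hat J_1^2\hat J_2^2$ underlying Proposition \ref{kjh1}(i) itself. Thus the corollary follows immediately from that proposition.
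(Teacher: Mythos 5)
Your proposal is correct and matches the paper's own (implicit) argument exactly: the paper states this corollary as an immediate consequence of Proposition \ref{kjh1}(i), obtained by taking $a=\cos\theta$, $b=\pm\sin\theta$ and $a=\sin\theta$, $b=\pm\cos\theta$, so that $a^2+b^2=1$. Your additional verification that the hypothesis of that proposition holds --- namely that $(\hat J_1\hat J_2)^2=-\hat J_1^2\hat J_2^2=-I$, so the triple $(\hat J_1,\hat J_2,\hat J_1\hat J_2)$ is indeed a generalized almost quaternionic structure --- is a correct and welcome bit of care that the paper leaves tacit.
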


\begin{corollary}
Let $\hat K_1$ and $\hat K_2$ be two anticommuting generalized almost product structures. Then

(i) $\hat K^{\pm}_{\theta}:=\sinh \theta \cdot \hat K_1\pm \cosh \theta \cdot \hat K_1\hat K_2$ is a generalized almost complex structure, for any $\theta \in \mathbb R$;

(ii) $\hat K^{\pm}_{\theta}:=\cosh \theta \cdot \hat K_1\pm \sinh \theta \cdot \hat K_1\hat K_2$ is a generalized almost product structure, for any $\theta \in \mathbb R$.
\end{corollary}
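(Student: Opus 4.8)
The plan is to reduce both parts to a single algebraic identity for the square of a real linear combination of $\hat K_1$ and $\hat K_1\hat K_2$, in the same spirit as the proof of Proposition \ref{kjh1}(ii), and then to read off the two cases from the hyperbolic identity $\cosh^2\theta-\sinh^2\theta=1$. First I would record the elementary facts forced by the hypotheses. Since $\hat K_1$ and $\hat K_2$ are generalized almost product structures, $\hat K_1^2=\hat K_2^2=I$, and by assumption $\hat K_1\hat K_2=-\hat K_2\hat K_1$. Hence $(\hat K_1\hat K_2)^2=\hat K_1\hat K_2\hat K_1\hat K_2=-\hat K_1^2\hat K_2^2=-I$, so $\hat K_1\hat K_2$ is a generalized almost complex structure; moreover $\hat K_1$ and $\hat K_1\hat K_2$ anticommute, because $\hat K_1(\hat K_1\hat K_2)=\hat K_2$ while $(\hat K_1\hat K_2)\hat K_1=-\hat K_1^2\hat K_2=-\hat K_2$. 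Equivalently, $(\hat K_1\hat K_2,\hat K_2,\hat K_1)$ is a generalized almost para-quaternionic structure with complex part $\hat K_1\hat K_2$, which is exactly the configuration to which Proposition \ref{kjh1}(ii) applies.

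Next I would compute, for arbitrary $p,q\in\mathbb R$,
$$(p\hat K_1+q\hat K_1\hat K_2)^2=p^2\hat K_1^2+pq\big(\hat K_1(\hat K_1\hat K_2)+(\hat K_1\hat K_2)\hat K_1\big)+q^2(\hat K_1\hat K_2)^2=(p^2-q^2)\,I,$$
where the mixed term drops out by the anticommutation just established and the diagonal terms use $\hat K_1^2=I$ and $(\hat K_1\hat K_2)^2=-I$. This is the only computation in the argument. The one place requiring care is the sign bookkeeping: the sign of the $q^2$ contribution, which originates from $(\hat K_1\hat K_2)^2=-I$, is precisely what separates the complex case from the product case, so the identification of $\hat K_1\hat K_2$ as the complex (rather than product) generator must be made explicitly before substituting coefficients.

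Finally I would substitute the two families. For part (i), taking $p=\sinh\theta$ and $q=\pm\cosh\theta$ gives $p^2-q^2=\sinh^2\theta-\cosh^2\theta=-1$, so $(\hat K^{\pm}_{\theta})^2=-I$ and each $\hat K^{\pm}_{\theta}$ is a generalized almost complex structure. For part (ii), taking $p=\cosh\theta$ and $q=\pm\sinh\theta$ gives $p^2-q^2=\cosh^2\theta-\sinh^2\theta=1$, so $(\hat K^{\pm}_{\theta})^2=I$ and each $\hat K^{\pm}_{\theta}$ is a generalized almost product structure. Since each $\hat K^{\pm}_{\theta}$ is visibly an $\mathbb R$-linear endomorphism of $\Gamma^\infty(TM\oplus T^*M)$, the condition on the square is all that has to be verified, which completes the argument. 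I do not expect a genuine obstacle beyond the sign care noted above; the statement is simply the hyperbolic counterpart of the trigonometric corollary preceding it, with $\cos,\sin$ replaced by $\cosh,\sinh$ to accommodate the product (rather than complex) nature of the two generators $\hat K_1,\hat K_2$.
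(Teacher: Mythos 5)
Your proposal is correct and takes essentially the same route as the paper: the corollary is obtained there by substituting $a=\sinh\theta$, $b=\pm\cosh\theta$ (respectively $a=\cosh\theta$, $b=\pm\sinh\theta$) into Proposition \ref{kjh1}(ii), whose proof is precisely your computation that the cross terms cancel by anticommutativity and $(a\hat K_1+b\hat K_1\hat K_2)^2=(a^2-b^2)I$, followed by the identity $\cosh^2\theta-\sinh^2\theta=1$. Your explicit identification of $\hat K_1\hat K_2$ as the complex generator (square $-I$) only makes transparent the sign bookkeeping that the paper leaves implicit.
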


Concerning the associated connections, we prove the following results.

\begin{proposition}
Let $(\hat J_1, \hat J_2,\hat J_1\hat J_2)$ be a generalized quaternionic structure on $M$, let $(\hat J_{a,b},\hat J_1)_{a,b\in \mathbb R, \ a^2+b^2=1}$ and let $(\hat J_{a,b}, \hat J_2)_{a,b\in \mathbb R, \ a^2+b^2=1}$ as above.
Then the canonical connections of $(\hat J_1,\hat J_2)$, $(\hat J_{a,b},\hat J_1)$ and $(\hat J_{a,b},\hat J_2)$ coincide.
\end{proposition}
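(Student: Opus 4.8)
The plan is to imitate the proof of the corresponding para-quaternionic statement, the decisive point being that under the present hypotheses the canonical connection is in fact torsion-free. First I would let $D$ denote the canonical connection of $(\hat J_1,\hat J_2,\nabla)$ furnished by Theorem \ref{HJP}. Because the triple is a \emph{generalized quaternionic} (and not merely almost quaternionic) structure, the three structures are $\nabla$-integrable; hence, by the proposition identifying the canonical connection with the generalized Obata connection, $D$ is the generalized Obata connection, and by Proposition \ref{1006} its torsion is $T^{D}=\tfrac{1}{6}\sum_{\alpha=1}^{3}N^{\nabla}_{\hat J_\alpha}=0$. Thus $D$ is torsion-free on the whole of $TM\oplus T^{*}M$.

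Next I would record that $D$ parallelizes every generalized almost complex structure built from $\hat J_1,\hat J_2,\hat J_3:=\hat J_1\hat J_2$. From $D\hat J_1=D\hat J_2=0$ and the Leibniz rule one gets $D\hat J_3=(D\hat J_1)\hat J_2+\hat J_1(D\hat J_2)=0$, and therefore, for $a^2+b^2=1$,
\[
(D_{X+\eta}\hat J_{a,b})(Y+\beta)=a\,(D_{X+\eta}\hat J_1)(Y+\beta)+b\,(D_{X+\eta}\hat J_3)(Y+\beta)=0,
\]
so $D\hat J_{a,b}=0$; a further application of Leibniz gives $D(\hat J_{a,b}\hat J_k)=0$. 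After checking that the relevant pair genuinely forms a generalized quaternionic structure — a routine anticommutator computation from the defining relations, for instance $\hat J_{a,b}$ anticommutes with $\hat J_2$ since $\hat J_{a,b}\hat J_2+\hat J_2\hat J_{a,b}=a(\hat J_1\hat J_2+\hat J_2\hat J_1)+b(\hat J_3\hat J_2+\hat J_2\hat J_3)=0$ — I would conclude that $D$ parallelizes both generators of the pair and their product, i.e. it satisfies property (i) of Theorem \ref{HJP} for that pair.

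Property (ii) is then immediate: since $T^{D}\equiv 0$, the torsion vanishes on \emph{every} eigenbundle splitting, in particular on the $\pm\sqrt{-1}$-eigenbundles of the rotated first structure. Hence $D$ fulfils conditions (i) and (ii) of Theorem \ref{HJP} relative to each admissible pair $(\hat J_{a,b},\hat J_k)$, and the uniqueness asserted there forces the canonical connection of that pair to equal $D$. Running this over all the pairs in the list yields the coincidence of all the canonical connections with $D$.

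The step I expect to be the crux is exactly property (ii). The torsion normalization in Theorem \ref{HJP} is tied to the eigenbundle decomposition of the first member of the pair, and that decomposition really does change when $\hat J_1$ is rotated into $\hat J_{a,b}$; there is no a priori reason for a torsion that is killed on one splitting to be killed on another. What rescues the argument is integrability, which collapses $T^{D}$ to zero identically and thereby renders the splitting-dependence of (ii) harmless. Were the structure only almost quaternionic, this transfer would break down and the canonical connections would in general differ.
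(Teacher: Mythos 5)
Your proposal is correct where the statement is meaningful, and its core is the same as the paper's: the paper's entire proof consists of the Leibniz computation
\[
(D_{X+\eta}\hat J_{a,b})(Y+\beta)=a(D_{X+\eta}\hat J_1)(Y+\beta)+b\Big((D_{X+\eta}\hat J_1)(\hat J_2(Y+\beta))+\hat J_1((D_{X+\eta}\hat J_2)(Y+\beta))\Big),
\]
which vanishes since $D\hat J_1=D\hat J_2=0$, followed by the words ``the conclusion follows from Theorem \ref{HJP}.'' What you add, and the paper leaves entirely implicit, is the verification of condition (ii) of Theorem \ref{HJP} for the rotated pair: since the torsion normalization there is tied to the $\pm\sqrt{-1}$-eigenbundles of the \emph{first} member of the pair, parallelism alone does not transfer it when $\hat J_1$ is rotated into $\hat J_{a,b}$. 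Your observation that integrability (which is part of the hypothesis, since the structure is generalized quaternionic, not merely almost) identifies $D$ with the generalized Obata connection, whence $T^D=\tfrac16\sum_{\alpha}N^\nabla_{\hat J_\alpha}=0$ by Proposition \ref{1006}, so that (ii) holds trivially for \emph{every} splitting, is exactly the missing glue, and you correctly flag it as the crux.

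One claim in your proposal fails, though the defect originates in the statement itself: the pair $(\hat J_{a,b},\hat J_1)$ does \emph{not} anticommute. Indeed, with $\hat J_3=\hat J_1\hat J_2$ one has $\hat J_3\hat J_1+\hat J_1\hat J_3=\hat J_2-\hat J_2=0$, so
\[
\hat J_{a,b}\hat J_1+\hat J_1\hat J_{a,b}=2a\hat J_1^2+b\big(\hat J_3\hat J_1+\hat J_1\hat J_3\big)=-2aI,
\]
which vanishes only for $a=0$; hence $(\hat J_{a,b},\hat J_1,\hat J_{a,b}\hat J_1)$ is not a generalized (almost) quaternionic structure and Theorem \ref{HJP} does not assign it a canonical connection at all. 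Your ``routine anticommutator computation'' succeeds for $(\hat J_{a,b},\hat J_2)$ — the case you wrote out — but not for $(\hat J_{a,b},\hat J_1)$, so you should not assert that the check passes for every listed pair. The paper's proof never performs this check either (contrast the para-quaternionic analogue, where both listed pairs genuinely anticommute), so your argument establishes the proposition precisely for those pairs for which it is well posed, which is all any proof could do; but an honest write-up should state this restriction rather than claim the verification is routine.
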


\begin{proof}
Let $D $ be the canonical connection of $(\hat J_1,\hat J_2,\hat J_1\hat J_2)$.
Then, for any $X,Y\in \Gamma(TM)$ and $\eta,\beta\in \Gamma(T^*M)$, we have
\[
(D _{X+\eta}\hat J_{a,b})(Y+\beta):=D _{X+\eta}(\hat J_{a,b}(Y+\beta))-\hat J_{a,b}(D _{X+\eta}Y+\beta)=
\]
\[
=a(D _{X+\eta}\hat J_1)(Y+\beta)+b\Big((D _{X+\eta}\hat J_1)(\hat J_2(Y+\beta))+\hat J_1((D _{X+\eta}\hat J_2)(Y+\beta))\Big),
\]
and the conclusion follows from Theorem \ref{HJP}.
\end{proof}

\begin{proposition}
Let $(\hat K_1, \hat K_2,\hat K_1\hat K_2)$ be a generalized para-quaternionic structure on $M$, let $(\hat K_{a,b},\hat K_1)_{a,b\in \mathbb R, \ a^2-b^2=\pm 1}$ and let $(\hat K_{a,b}, \hat K_2)_{a,b\in \mathbb R, \ a^2-b^2=\pm 1}$ as above.
Then the canonical connections of $(\hat K_1,\hat K_2)$, $(\hat K_{a,b},\hat K_1)$ and $(\hat K_{a,b},\hat K_2)$ coincide.
\end{proposition}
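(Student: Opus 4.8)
The plan is to exhibit a single connection that serves simultaneously as the canonical connection of all three triples, namely the canonical connection $D$ of the given para-quaternionic structure $(\hat K_1,\hat K_2,\hat K_1\hat K_2)$ furnished by Theorem \ref{hjp}. Since this structure is integrable (the adjective \emph{almost} has been dropped), the characterization of torsion-freeness established just after Theorem \ref{hjp} guarantees that $D$ is torsion-free, i.e.\ $T^D=0$, while by construction $D\hat K_1=D\hat K_2=D(\hat K_1\hat K_2)=0$. The whole argument then reduces to showing that $D$ also parallelizes the new structures $\hat K_{a,b}$, the torsion condition being already subsumed by the vanishing of $T^D$.

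First I would compute $D\hat K_{a,b}$. Because $\hat K_{a,b}=a\hat K_1+b\,\hat K_1\hat K_2$ is an $\mathbb R$-linear combination of $\hat K_1$ and the product $\hat K_1\hat K_2$, the Leibniz rule for $D$ gives, for any $X,Y\in\Gamma^\infty(TM)$ and $\eta,\beta\in\Gamma^\infty(T^*M)$,
\[
(D_{X+\eta}\hat K_{a,b})(Y+\beta)=a(D_{X+\eta}\hat K_1)(Y+\beta)+b\Big((D_{X+\eta}\hat K_1)(\hat K_2(Y+\beta))+\hat K_1((D_{X+\eta}\hat K_2)(Y+\beta))\Big),
\]
exactly as in the two preceding propositions. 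Each summand involves $D\hat K_1$ or $D\hat K_2$, both of which vanish, so $D\hat K_{a,b}=0$. Together with $D\hat K_1=D\hat K_2=0$, this shows that $D$ parallelizes every generalized structure occurring in the triples built on $(\hat K_{a,b},\hat K_1)$ and on $(\hat K_{a,b},\hat K_2)$.

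It then remains to invoke uniqueness. The connection $D$ satisfies condition (i) of Theorem \ref{hjp} for each new triple, since it parallelizes the two given structures and their product, and it satisfies condition (ii) automatically because $T^D=0$. Hence, by the uniqueness clause of Theorem \ref{hjp}, $D$ coincides with the canonical connection of each of $(\hat K_{a,b},\hat K_1)$ and $(\hat K_{a,b},\hat K_2)$, and trivially with that of $(\hat K_1,\hat K_2)$; so the three canonical connections coincide.

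The step I expect to require the most care is not the displayed computation but verifying that Theorem \ref{hjp} genuinely applies to each listed pair, that is, that $(\hat K_{a,b},\hat K_1)$ and $(\hat K_{a,b},\hat K_2)$ really generate generalized almost para-quaternionic structures. This is where Proposition \ref{kjh1} enters, yielding $\hat K_{a,b}^2=(a^2-b^2)I$ and hence the complex or product type of $\hat K_{a,b}$ according to the sign of $a^2-b^2$; one then checks the anticommutation of the members of each triple directly from $\hat K_1\hat K_2=-\hat K_2\hat K_1$, so that the $\pm$-eigenbundle splitting underpinning the torsion condition (ii) is available. Once this bookkeeping is settled, the identification of $D$ with each canonical connection follows from uniqueness with no further computation.
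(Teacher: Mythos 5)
Your proposal follows essentially the same route as the paper: the paper's proof likewise takes the canonical connection $D$ of $(\hat K_1,\hat K_2,\hat K_1\hat K_2)$, expands $(D_{X+\eta}\hat K_{a,b})(Y+\beta)$ by the Leibniz rule into exactly the combination $a(D_{X+\eta}\hat K_1)(Y+\beta)+b\big((D_{X+\eta}\hat K_1)(\hat K_2(Y+\beta))+\hat K_1((D_{X+\eta}\hat K_2)(Y+\beta))\big)$ you display, and concludes from Theorem \ref{hjp}. In one respect you are more careful than the paper: the mixed-torsion condition (ii) in Theorem \ref{hjp}, applied to a new triple, is taken with respect to the eigenbundles of the \emph{new} structure $\hat K_{a,b}$, not those of $\hat K_1$, so it is not automatic from the defining properties of $D$; your observation that integrability of the given (non-almost) para-quaternionic structure forces $T^D=0$ outright, hence condition (ii) for every triple at once, supplies precisely the justification the paper compresses into ``the conclusion follows from Theorem \ref{hjp}.''

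However, the step you flag as needing care does in fact fail for one of the listed pairs. With $\hat K_{a,b}=a\hat K_1+b\,\hat K_1\hat K_2$ one computes $\hat K_{a,b}\hat K_1+\hat K_1\hat K_{a,b}=2a\hat K_1^2+b(\hat K_1\hat K_2\hat K_1+\hat K_1^2\hat K_2)=2aI$, since $\hat K_1\hat K_2\hat K_1=-\hat K_2$; so $\hat K_{a,b}$ and $\hat K_1$ anticommute only when $a=0$. (By contrast $\hat K_{a,b}\hat K_2+\hat K_2\hat K_{a,b}=b(\hat K_1\hat K_2^2+\hat K_2\hat K_1\hat K_2)=0$, so $(\hat K_{a,b},\hat K_2)$ does generate a genuine generalized almost para-quaternionic triple, of the type predicted by Proposition \ref{kjh1} according to the sign of $a^2-b^2$.) Consequently your claim that anticommutation ``checks directly'' for each triple is incorrect for $(\hat K_{a,b},\hat K_1)$, and Theorem \ref{hjp} cannot be invoked for that pair as stated. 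This is a defect of the proposition itself --- the paper's own proof never verifies it either --- so it is not a divergence between your argument and the paper's; for $(\hat K_1,\hat K_2)$ and $(\hat K_{a,b},\hat K_2)$ your proof, like the paper's, goes through.
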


\begin{proof}
Let $D $ be the canonical connection of $(\hat K_1,\hat K_2,\hat K_1\hat K_2)$.
Then, for any $X,Y\in \Gamma(TM)$ and $\eta,\beta\in \Gamma(T^*M)$, we have
\[
(D _{X+\eta}\hat K_{a,b})(Y+\beta):=D _{X+\eta}(\hat K_{a,b}(Y+\beta))-\hat K_{a,b}(D _{X+\eta}Y+\beta)=
\]
\[
=a(D _{X+\eta}\hat K_1)(Y+\beta)+b\Big((D _{X+\eta}\hat K_1)(\hat K_2(Y+\beta))+\hat K_1((D _{X+\eta}\hat K_2)(Y+\beta))\Big),
\]
and the conclusion follows from Theorem \ref{hjp}.
\end{proof}

\bigskip

\noindent Adara M. BLAGA, \\
Department of Mathematics, \\
Faculty of Mathematics and Computer Science, \\
West University of Timi\c{s}oara, \\
Bld. V. P\^{a}rvan, 4-6, 300223, Timi\c{s}oara, Rom\^{a}nia. \\
Email: adarablaga@yahoo.com

\bigskip

\noindent Antonella NANNICINI, \\
Department of Mathematics and Informatics, \\
"U. Dini", University of Florence,\\
Viale Morgagni, 67/a, 50134, Firenze, Italy.\\
Email: antonella.nannicini@unifi.it

\end{document}